\theoremstyle{definition}
\newtheorem{theorem}{Theorem}[section]
\newtheorem{lemma}[theorem]{Lemma}
\newtheorem{corollary}[theorem]{Corollary}
\newtheorem{proposition}[theorem]{Proposition}
\newtheorem{definition}[theorem]{Definition}
\newtheorem{example}[theorem]{Example}
\newtheorem{remark}[theorem]{Remark}
\newtheorem{construction}[theorem]{Construction}
\newtheorem{situation}[theorem]{Situation}
\newcommand{\newstuffcolor}{black}
\newcommand{\msout}[1]{\text{\sout{\ensuremath{#1}}}}
\newcommand{\CC}{\mathbb{C}}
\newcommand{\NN}{{\mathbb{N}}}
\newcommand{\OO}{\mathcal{O}}
\newcommand{\N}[1]{{N_{#1}}}
\newcommand{\Nl}[1]{{N_{#1}^{\ell}}}
\newcommand{\Spec}{{\text{Spec}\:}}
\newcommand{\Hom}{\text{Hom}}
\newcommand{\EExt}{\underline{\text{Ext}}}
\newcommand{\ccx}[1]{{\mathbb{L}_{#1}}}
\newcommand{\lccx}[1]{{\mathbb{L}^\ell_{#1}}}
\newcommand{\vfc}[2]{ {[{#1}, {#2}]^{vir}} }
\newcommand{\lvfc}[2]{ {[{#1}, {#2}]^{\ell vir}} }
\newcommand{\glob}{\mathcal{V}}
\newcommand{\rectglob}{\mathcal{W}}
\newcommand{\Cl}[1]{{C_{#1}^\ell}}
\newcommand{\Clstrict}[1]{{C_{#1}^{\msout{\ell}}}}
\newcommand{\C}[1]{C_{#1}}
\newcommand{\Tl}[1]{{T^{\ell}_{#1}}}
\newcommand{\lkah}[1]{\Omega^\ell_{#1}}
\newcommand{\Log}{{\mathcal{L}}}
\newcommand{\lpb}{{\arrow[dr, phantom, very near start, "\ulcorner \ell"]}}
\newcommand{\lpbstrict}{{\arrow[dr, phantom, very near start, "\ulcorner \msout{\ell}"]}}
\newcommand{\pb}{{\arrow[dr, phantom, very near start, "\ulcorner"]}}
\newcommand{\Ms}{{\overline{M}_{g, n}}}
\newcommand{\Msi}[1]{{\overline{M}_{g, n} (#1)}}
\newcommand{\Msp}[1]{{\overline{M}_{g, n+#1}}}
\newcommand{\Ml}{{\mathscr{M}^\ell_{g, n}}}
\newcommand{\point}{{\overline{o}}}
\newcommand{\Mprel}{{\mathfrak{M}_{g, n}}}
\newcommand{\UU}{{\mathcal{U}}}
\newcommand{\UUU}{{\widetilde{\mathcal{U}}}}
\newcommand{\longequals}{\xlongequal{\: \:}}
\newcommand{\lpot}{{Log POT}\,}
\newcommand{\lvirt}{{Log VFC}\,}
\tikzset{smooth/.style={
        decoration={markings,
            mark= at position 0.5 with {
                \node[transform shape] (tempnode) {$\backslash$};
            }
        },
        postaction={decorate}
    }
}
\tikzset{etale/.style={
        decoration={markings,
            mark= at position 0.5 with {
                \node[transform shape] (tempnode) {$\backslash\backslash$};
            }
        },
        postaction={decorate}
    }
}
\newcommand{\onlyinsubfile}[1]{#1}
\newcommand{\notinsubfile}[1]{}
\newcommand{\widhat}[1]{\widehat{\hspace{0.1em}#1\hspace{0.15em}}}
\newcommand{\widtilde}[1]{\widetilde{\hspace{0.1em}#1\hspace{0.1em}}}
\title{\textbf{The Log Product Formula}}
\author[Leo Herr]{Leo Herr\\University of Utah \\ University of Colorado Boulder*}
\thanks{*The author's present address is the former. This article formed part of the author's dissertation at the latter.}
\date{\today}
\begin{document}

\maketitle

\renewcommand{\onlyinsubfile}[1]{}
\renewcommand{\notinsubfile}[1]{#1}

\setcounter{section}{-1}

\begin{abstract}
    We prove a formula expressing the Log Gromov-Witten Invariants of a product of log smooth varieties $V \times W$ in terms of the invariants of $V$ and $W$. This extends results of \cite{logprodfmla}, which introduced this formula analogously to \cite{prodfmla}. The proof requires notions of ``log normal cone'' and ``log virtual fundamental class,'' as well as modified versions of standard intersection-theoretic machinery \cite{virtpb} adapted to Log Geometry. 
\end{abstract}

\section{Introduction}

\subsection{The Log Product Formula}

The purpose of the present paper is to prove the ``Product Formula'' for Log Gromov-Witten Invariants. For ordinary Gromov-Witten Invariants, the analogous formula was established by K. Behrend in \cite{prodfmla}. 

Let $V$, $W$ be log smooth, quasiprojective log schemes. Let $Q$ be the fs fiber product 
\[\Msi{V} \times^\ell_{\Ms} \Msi{W},\] 
with maps
\[\Msi{V \times W} \overset{h}{\longrightarrow} Q \overset{\widtilde{\Delta}}{ \longrightarrow } \Msi{V} \times \Msi{W}.\]
One can naturally endow $Q$ with a ``log virtual fundamental class'' in two ways: pushing forward that of $\Msi{V \times W}$ or pulling back that of $\Msi{V} \times \Msi{W}$. The Product Formula equates these: 

\begin{theorem}[The ``Log Gromov-Witten Product Formula'']
The two log virtual fundamental classes are equal in $A_*Q$:
\[h_* \lvfc{V \times W}{E(V \times W)} = \Delta^! \lvfc{V}{E(V)} \times \lvfc{W}{E(W)}.\]
\end{theorem}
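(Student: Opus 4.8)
The plan is to reduce the identity to a comparison of log virtual pullbacks over the log stack of curves, following the template by which Manolache \cite{virtpb} reproves Behrend's formula \cite{prodfmla}, but executed entirely inside the fs log category. Write $\Mprel$ for the log stack of prestable curves and let $a_V\colon \Msi{V}\to\Mprel$, $a_W$, $a_{V\times W}$ be the morphisms remembering only the log domain, each carrying its log perfect obstruction theory; by construction $\lvfc{V}{E(V)}=a_V^![\Mprel]$ for the associated log virtual pullback $a_V^!$, and similarly for $W$ and $V\times W$. The morphism $h$ factors $a_{V\times W}=b\circ h$, where $b\colon Q\to\Mprel$ is induced on the fs fiber product and $\widetilde\Delta\colon Q\to\Msi{V}\times\Msi{W}$ fills an fs-cartesian square over the log diagonal $\Delta$. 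In this way the entire comparison is transported to the single base $\Mprel$, the passage between the diagonal of $\Ms$ and that of $\Mprel$ being absorbed into stabilization.

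The first task is to assemble the intersection-theoretic inputs in log form. I would define the log virtual pullback $a^!$ attached to a log perfect obstruction theory through the log normal cone inside its log bundle stack, and then establish three compatibilities: stability under proper pushforward, fs base change along the log diagonal, and functoriality in a composition $a_{V\times W}=b\circ h$ whenever the relevant log obstruction theories sit in a distinguished triangle $h^*E(b)\to E(V\times W)\to E(h)\to[1]$. This last property is the log analogue of the main result of \cite{virtpb}; proving it requires following the log structures through the log deformation to the normal cone, and this is where most of the adapted machinery is consumed.

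Next I would prove the decomposition of log obstruction theories. Since the log tangent complex satisfies $T^{\log}_{V\times W}=\mathrm{pr}_V^*T^{\log}_V\oplus\mathrm{pr}_W^*T^{\log}_W$ and the log structure on $V\times W$ is the product, the relative theory $R\pi_*f^*T^{\log}_{V\times W}$ splits as the sum of the two factor theories; together with the insensitivity of $R\pi_*f^*T^{\log}$ to stabilization of $f$-contracted components, this yields a canonical identification $E(V\times W)\cong h^*\widetilde\Delta^*\bigl(E(V)\boxplus E(W)\bigr)$, so that the relative theory $E(h)$ is trivial and $h$ is log-virtually birational. Granting the inputs above, the right-hand side becomes $\Delta^!\bigl(a_V^![\Mprel]\times a_W^![\Mprel]\bigr)$; using that the external product of log virtual pullbacks is the pullback of the external product, and applying fs base change along $\Delta$, it rewrites as $b^!_{E(b)}[\Mprel]$ with $E(b)=\widetilde\Delta^*\bigl(E(V)\boxplus E(W)\bigr)$. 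The functoriality triangle with $E(h)$ trivial then gives $a_{V\times W}^!=h^!\circ b^!_{E(b)}$, and properness together with virtual birationality of $h$ gives $h_*h^!=\mathrm{id}$, so that $h_*a_{V\times W}^![\Mprel]=b^!_{E(b)}[\Mprel]$, which is the left-hand side.

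The hard part will be the two genuinely log-geometric points concealed in these reductions. First, the fs fiber product $Q=\Msi{V}\times^\ell_{\Ms}\Msi{W}$ is not the naive fiber product: saturation subdivides the relevant monoids, so the log normal cone of $b$ need not coincide with the product cone, and one must show that the log deformation to the normal cone and the refined log Gysin map $\Delta^!$ are compatible with this saturation---this is exactly the point at which the log normal cone must be shown to base-change correctly. Second, the identity $h_*h^!=\mathrm{id}$ is a comparison of log normal cones, not a numerical statement: one must verify that the log intrinsic normal cone of $\Msi{V\times W}$ pushes forward to the diagonal-refined product cone. I expect the fs base change and functoriality of the log virtual pullback against the saturated fiber product to be the principal burden, the decomposition of obstruction theories being comparatively formal.
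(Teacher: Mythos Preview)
Your overall architecture---reduce to compatibility of log virtual pullbacks, decompose the obstruction theory as a box sum, and then push forward along $h$---is the same as the paper's, and your identification of the two genuinely log-geometric pressure points (fs base change for the log normal cone along $\Delta$, and the ``pushforward $=$ identity'' step for $h$) is accurate. The paper packages these as, respectively, commutativity of the log Gysin map and a log Costello formula; your Manolache-style functoriality triangle is an equivalent reorganization of the same content.

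There is, however, a concrete gap in your setup. The factorization $a_{V\times W}=b\circ h$ with $b\colon Q\to\Mprel$ does not exist. The map $h$ sends a stable map $C\to V\times W$ to the pair of \emph{stabilized} maps $(C'\to V,\,C''\to W)$; the original domain $C$ is not recoverable from $(C',C'')$, so no map $Q\to\Mprel$ can recover $a_{V\times W}$ after composing with $h$. Your parenthetical that the passage between the diagonals of $\Ms$ and $\Mprel$ is ``absorbed into stabilization'' is precisely where this breaks: stabilization loses information, and the square you want over $\Mprel$ does not commute.

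The paper repairs this by introducing the auxiliary stack $\mathfrak{D}$ of diagrams $(C'\leftarrow C\to C'')$ of partial stabilizations, together with $Q'=\Mprel\times^\ell_{\Ms}\Mprel$, and working with the fs-cartesian square
\[
\begin{tikzcd}
\Ml(V\times W)\ar[r,"h"]\ar[d,"c"] & Q\ar[d]\\
\mathfrak{D}\ar[r,"l"] & Q'.
\end{tikzcd}
\]
Here $c$ remembers the full diagram, so nothing is lost. The log Costello formula is then applied to \emph{this} square (with $l$ proper of pure degree one and $\mathfrak{D},Q'$ log smooth equidimensional), which is the correct incarnation of your $h_*h^!=\mathrm{id}$. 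The commutativity of log Gysin maps is applied to the adjacent square bounded by $\phi$ and $a$. If you rewrite your argument over $\mathfrak{D}$ and $Q'$ rather than over a single $\Mprel$, your functoriality-triangle approach should go through and be essentially a reorganization of the paper's proof.
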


The symbol $\Delta^!$ refers to a ``Log Gysin Map'' for which we offer a definition, along with ``Log Virtual Fundamental Classes.'' 

This theorem was formulated for ordinary virtual fundamental classes in \cite{logprodfmla} and proved under the assumption that one of $V$ or $W$ has trivial log structure. Like their work and \cite{prodfmla} before it, our proof centers on this cartesian diagram (Situation \ref{sit:thechaseddiagram}):
\[\begin{tikzcd}
\Ml(V \times W) \arrow[r, "h"] \arrow[d, "c"] \lpb        &Q \arrow[r] \arrow[d] \lpb     &\Ml(V) \times \Ml(W) \arrow[d, "a"]     \\
\mathfrak{D} \arrow[r, "l"]        &Q' \arrow[r, "\phi"] \arrow[d] \lpb       &\Mprel \times \Mprel \arrow[d, "s \times s"]      \\
        &\Ms \arrow[r, "\Delta"]        &\Ms \times \Ms.
\end{tikzcd}\]
One applies Costello's Formula \cite[Theorem 5.0.1]{costello} and commutativity of the Gysin Map to this diagram to compare virtual fundamental classes.

In the log setting, one requires this diagram to be cartesian in the 2-category of \textit{fs log algebraic stacks} in order to preserve modular interpretations. The assumption of \cite{logprodfmla} that $V$ or $W$ have trivial log structure ensures that these squares are \textit{also} cartesian as underlying algebraic stacks. 

These fs pullback squares in question likely aren't cartesian on underlying algebraic stacks. Therefore, none of the standard machinery of ordinary Gysin Maps and Normal Cones is valid. This quandary forced us to prove the log analogues of Costello's Formula and commutativity for our ``Log Gysin Map.'' With these modifications, the original proof of K. Behrend essentially still works. We pause to comment on the new technology.

\subsection{Log Normal Cones}

The \textit{Log Normal Cone} $\Cl{X/Y} = \C{X/\Log Y}$ of a map $f : X \to Y$ of log algebraic stacks is the central object of the present paper. Every log map factors as the composition of a strict and an \'etale map $X \to \Log Y \to Y$, so the cone is determined by two properties: 
\begin{itemize}
    \item It agrees with the ordinary normal cone for strict maps.
    \item If one can factor $f$ as $X \to Y' \to Y$ with $Y' \to Y$ log \'etale, the cones are canonically isomorphic:
    \[\Cl{X/Y} \simeq \Cl{X/Y'}.\]
\end{itemize}

This object becomes simpler in the presence of charts. Locally, we may assume the map $X \to Y$ has a chart given by a map of Artin Cones $A_P \to A_Q$. The map $A_P \to A_Q$ is log \'etale, so we can base change across it to get a strict map without altering the log normal cone. 

Because this method can lead to radical alterations of the target $Y$, we recall another strategy that we learned from \cite[Proposition 2.3.12]{logmotives}. For ordinary schemes, one locally factors a map as a closed immersion composed with a smooth map to get a presentation for the normal cone \cite{intrinsic}. We obtain a similar local factorization (Construction \ref{katofactorization}) into a \textit{strict} closed immersion composed with a log smooth map, and the same presentation exists for the log normal cone. 

The above is made more precise in Remark \ref{axiomaticcharacterizationoflognormalcones}. The charts and factorizations these techniques require are only locally possible, so we need to know how log normal cones change after \'etale localization. We encounter a well-known subtlety noticed by W. Bauer \cite[\S 7]{logcotangent}: The log normal cone isn't invariant under base-changes by log \'etale maps (Remark \ref{rmk:stretalecaseoflciseschangesource}). Our workaround is somewhat different from that of Olsson. These results are at the service of log intersection theory, and we outline a standard package of log virtual fundamental classes and Log Gysin Maps.

\subsection{Pushforward and Gysin Pullback}

The proof of the Product Formula needs two ingredients: commutativity of Gysin maps and compatibility of pushforward with Gysin maps. The commutativity of Gysin Maps readily generalizes to the log setting in Theorem \ref{commlogpb}; on the other hand, compatibility with pushforward simply fails!

Nevertheless, the original proof of the product formula depends on a weak form of this compatibility first introduced by Costello \cite[Theorem 5.0.1]{costello}. We prove a log version of this theorem and will offer further complements in \cite{ourcorrectiontocostello}.

We obtain another partial result towards compatibility of pushforward and Gysin Pullback. For a log blowup $p : \widehat{X} \to X$ with a log smoothness assumption, we show $p_* [\widehat{X}]^{\ell vir} = [X]^{\ell vir}$ in Theorem \ref{pfwdoflvfcslblowup}. The alternative approach of \cite{logchowrecentpaper} may extend our results by modifying the notions of dimension, degree, pushforward, chow goups, etc. in the log setting. See also \cite{dhruvloggwthy} for an insightful approach to Log Chow Groups.

We hope the technology and the strategy of reducing statements about log normal cones to the strict, ordinary case will be of interest.

\subsection{Conventions}

\begin{itemize}
    \item We \textit{only consider fs log structures}. We therefore use $\Log, \Log Y$ to refer to Olsson's stacks $\mathscr{T}or, \mathscr{T}or Y$. 
    \item We work over the base field $\mathbb{C}$. 
    \item We adhere to the convention of \cite{logstacks} regarding the use of the term ``algebraic stack'': we mean a stack in the sense of \cite[3.1]{laumonmoretbaillystacks} such that
\begin{itemize}
    \item the diagonal is representable and of finite presentation, and 
    \item there exists a surjective, smooth morphism to it from a scheme. 
\end{itemize}
{\noindent}We do not require the diagonal morphism to be separated.
    \item By ``log algebraic stack,'' we mean an algebraic stack with a map to $\Log$. Maps between them need not lie over $\Log$.
    \item The name ``DM stack'' means Deligne-Mumford stack and a morphism $f : X \rightarrow Y$ of algebraic stacks is (of) ``DM-type'' or simply ``DM'' if every $Y$-scheme $T \rightarrow Y$ pulls back to a DM stack $T \times_{f, Y} X$ \cite{virtpb}.
    \item The word ``cone" in ``log normal cone'' refers to a cone stack in the sense of \cite{intrinsic}. 
    \item Let $P$ be a sharp fs monoid. Write 
    \[A_P = [\Spec \mathbb{C}[P]/\Spec \mathbb{C}[P^{gp}]]\]
    {\noindent}for the stack quotient in the \'etale topology endowed with its natural log structure \cite{wisebounded}, \cite{tropicalcurvesmodulicones}, \cite{logstacks}. Beware that some of these sources first take the dual monoid. This log stack has a notable functor of points for fs log schemes: 
    \[\Hom_{fs}(T, A_P) = \Hom_{mon}(P, \Gamma(\overline{M}_T)).\]
    In particular, 
    \[\Hom_{fs}(A_P, A_Q) = \Hom_{mon}(Q, P).\]
    We write $A$ for $A_{\mathbb{N}} = [\mathbb{A}^1/\mathbb{G}_m]$. Log algebraic stacks of this form are called ``Artin Cones.'' ``Artin Fans'' are log algebraic stacks which admit a strict \'etale cover by Artin Cones. The 2-category of Artin Fans is equivalent to a category of ``cone stacks'' \cite[Theorem 6.11]{tropicalcurvesmodulicones}. 
    
    \item The present paper concerns analogues of normal cones and pullbacks in the logarithmic category. We use the notation $\ulcorner$, $\times$, $\C{}$ for pullbacks and normal cones of ordinary stacks, and write $\ulcorner \ell$, $\times^\ell$, $\Cl{}$ to distinguish the fs pullbacks and log normal cones. When they happen to coincide, we write $\msout{\ell}$, $\ulcorner \msout{\ell}$, $\times^{\msout{\ell}}$, $\Clstrict{}$ to emphasize this coincidence. 
    
    \item Many of our citations could be made to original sources, often written by K. Kato, but we have opted for the book \cite{ogusloggeom}. We have doubled references to Costello's Formula \cite[Theorem 5.0.1]{costello}, \cite{ourcorrectiontocostello} where appropriate because we will have more to say building on future work. 
    
\end{itemize}

\ifdefined\isthesis{}
\else{
\subsection{Acknowledgments}

The present article is part of the author's Ph.D. thesis at the University of Colorado, Boulder under the supervision of Jonathan Wise. Not a result was envisioned, obtained, or fixed without his tremendous support, guidance, and patience. 

The author also benefitted from email correspondence with Dhruv Ranganathan and Lawrence Barrott. The author is grateful to the NSF for partial financial support from RTG grant \#1840190.
}
\fi

\section{Preliminaries and the Log Normal Sheaf}

The present paper originated with one central construction, which we learned from \cite[Lemma 2.3.12]{logmotives}. 

\begin{construction}\label{katofactorization}
The normal cone of a morphism $f : B \rightarrow A$ of finite type is constructed by choosing a factorization $B \rightarrow B[x_1, \dots, x_r] \twoheadrightarrow A$ inducing a closed immersion into affine $r$-space: 
\[\Spec A \hookrightarrow \mathbb{A}^r_B \rightarrow \Spec B.\]
The normal cone of $f$ may then be expressed as the quotient of the ordinary normal cone of the closed immersion by the action of the tangent bundle of $\mathbb{A}^r_B \rightarrow \Spec B$. 

Let $P \to A$ and $Q \to B$ be morphisms from fs monoids to the multiplicative monoids of rings (``prelog rings''). A commutative square:
\[\begin{tikzcd}
B \arrow[r, "f"]         &A      \\
Q \arrow[r, "\theta"] \arrow[u]       &P \arrow[u]
\end{tikzcd}\]
is a chart of a map between affine log schemes. Assume $f$ is of finite type; $\theta$ automatically is by the fs assumption. We will obtain a factorization of the induced log schemes into a \textit{strict closed immersion} followed by a \textit{log smooth map}. 

Start with a similar factorization
\[\begin{tikzcd}
B \arrow[r]       &B[x_1, \cdots, x_r, y_1, \dots, y_s] \arrow[r, two heads]       &A      \\
Q \arrow[r] \arrow[u]       &Q_s \arrow[r, two heads] \arrow[u]      &P \arrow[u]      \\
\end{tikzcd}\]
with $Q_s = Q \oplus \NN^s$ mapping to $B[x_1, \dots x_r, y_1, \dots y_s]$ by sending the generators of $\NN^s$ to the algebra generators $y_1, \dots y_s$. Define $Q_s^\theta$ via the cartesian product
\[\begin{tikzcd}
Q_s \arrow[r, hook] \arrow[d, hook]       &Q_s^\theta \arrow[r, two heads] \arrow[d, hook]  \arrow[dr, very near start, phantom, "\ulcorner"]      &P \arrow[d, hook]      \\
Q_s^{gp} \arrow[r, equals]       &Q_s^{gp} \arrow[r]        &P^{gp}.
\end{tikzcd}\]
By definition, $Q_s^\theta \rightarrow P$ is exact, and $Q_s \rightarrow Q_s^\theta$ is a ``log modification:'' an isomorphism on groupifications. Witness also that $Q_s^\theta \rightarrow P$ is surjective, so the characteristic monoid map $\overline{Q_s^\theta} \overset{\sim}{\rightarrow} \overline{P}$ is an isomorphism \cite[Proposition I.4.2.1(5)]{ogusloggeom} and $\Spec P \rightarrow \Spec Q_s^\theta$ is strict. Take $\Spec$ of both rings and monoids \cite[\S II]{ogusloggeom} to obtain a diagram with strict vertical arrows: 
\[\begin{tikzcd}
X \arrow[r, hook] \arrow[d]       &X_\theta \arrow[r] \arrow[d] \lpb       &\mathbb{A}^{r+s}_Y \arrow[r] \arrow[d]     &Y \arrow[d]      \\
\Spec P \arrow[r]     &\Spec Q^\theta_s \arrow[r]       &\Spec Q_s \arrow[r]        &\Spec Q
\end{tikzcd}\]

We've written $Y = \Spec B$, $X = \Spec A$ and introduced the fs pullback $X_\theta$ in the diagram. The top row expresses our original map $\Spec f$ as the composition of a strict closed immersion, a log modification, and a smooth and log smooth morphism. The log modification $\Spec Q^\theta_s \rightarrow \Spec Q_s$ and hence $X_\theta \rightarrow \mathbb{A}^{r+s}_Y$ may be expressed as a (strict) open immersion into a log blowup as in \cite[Lemma II.1.8.2, Remark II.1.8.5]{ogusloggeom}. Hence $X \subseteq X_\theta$ is a strict closed immersion and $X_\theta \rightarrow Y$ is log smooth. 

\end{construction}

\begin{remark}\label{katofactorizationwithsurjectivemonoids}
Continue in the notation of Construction \ref{katofactorization}. If we began with a morphism of fs log rings with $f$ and $\theta$ both surjective, we could omit $Q_s \rightarrow B[x_1, \dots, x_r, y_1, \dots y_s]$. In that case, we obtain a factorization 
\[X \subseteq X_\theta \rightarrow Y\]
where $X_\theta \rightarrow Y$ is not only log smooth but log \'etale. 
\end{remark}

As in \cite{intrinsic}, we will present the log normal cone locally as $\Cl{X/Y} = [\C{X/X_\theta}/\Tl{X_\theta/Y}]$ using these factorizations. The difficulty is then piecing together the local descriptions and checking compatibility. In this sense, the heavy lifting has already been done for us by \cite{virtpb}. We spend the rest of this section collecting relevant properties of the log normal sheaf $\Nl{X/Y}$. When we define the log normal cone $\Cl{X/Y} \subseteq \Nl{X/Y}$, its important properties will be locally deduced from such factorizations.

\begin{remark}
An algebraic stack $X$ is DM if and only if the map $X \rightarrow \Spec k$ to the base field is of DM-type. If $X \to Y$ is a morphism of DM type and $Y$ admits a stratification by global quotients, then so does $X$ \cite[Remark 3.2]{virtpb}. A morphism $f : X \to Y$ of algebraic stacks is of DM type if and only if its diagonal $\Delta_{X/Y} : X \to X \times_Y X$ is unramified \cite[06N3]{sta}. 
\end{remark}

\begin{lemma}
Let $f : X \rightarrow Y$ be a morphism of log algebraic stacks. If the map on underlying stacks is of DM-type, then the induced maps $\Log X \rightarrow \Log Y$ and $X \to \Log X$ are DM-type.
\end{lemma}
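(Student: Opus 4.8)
The plan is to verify the unramifiedness criterion for DM-type morphisms recalled above \cite[06N3]{sta} for each map separately, using that $\Log X$ and $\Log Y$ are algebraic stacks \cite{logstacks} so that relative diagonals and inertia make sense. I freely identify a morphism from a scheme $T \to \Log Y$ with a pair $(\mathcal N, (T,\mathcal N) \to Y)$ of an fs log structure on $T$ and a log morphism to $Y$, and likewise for $\Log X$; then $\Log X \to \Log Y$ is postcomposition with $f$, while $X \to \Log X$ is the strict section recording the identity $M_X \to M_X$.

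First I would dispatch $X \to \Log X$, which needs no hypothesis on $f$. A $T$-point $(b, \mathcal N, \beta : b^*M_X \to \mathcal N)$ of $\Log X$ lies in the image of the section exactly when $\beta$ is an isomorphism, in which case the lift is unique up to unique isomorphism; hence $X(T) \to \Log X(T)$ is fully faithful for all $T$, i.e.\ $X \to \Log X$ is a monomorphism. Its relative diagonal is then an isomorphism, so it is unramified and $X \to \Log X$ is of DM-type.

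For $\Log(f) : \Log X \to \Log Y$ I would argue at the level of relative inertia: the map is of DM-type iff $\Delta_{\Log(f)}$ is unramified \cite[06N3]{sta}, equivalently iff the relative inertia $I_{\Log X/\Log Y} \to \Log X$ is unramified (the inertia is the pullback of $\Delta_{\Log(f)}$ along itself, and the two unramifiedness conditions coincide along the identity section). Fix $\xi = ((T,\mathcal N) \xrightarrow{b} X)$. An automorphism of $\xi$ over $\Log Y$ is a pair $(\sigma, \tau)$, with $\sigma$ an automorphism of the underlying $b \in \underline X(T)$ over $\underline Y$ and $\tau$ a log automorphism of $\mathcal N$ compatible with $\beta$; the latter is a homomorphism $u : \overline{\mathcal N}^{gp} \to \mathcal O_T^*$ vanishing on the image of $\overline{M_X}^{gp}$. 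The forgetful map $I_{\Log X/\Log Y} \to \pi_X^* I_{\underline f}$ to the relative inertia of $\underline f$ (pulled back along the projection $\pi_X : \Log X \to \underline X$) sends $(\sigma, \tau) \mapsto \sigma$. Its kernel consists of pairs $(\mathrm{id}, \tau)$ mapping to the identity of $\mathrm{Aut}_{\Log Y}$; but $\tau = u$ maps to the same $u$ downstairs, so the kernel forces $u = 1$. Thus $I_{\Log X/\Log Y} \to \pi_X^* I_{\underline f}$ is a monomorphism.

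Since $\underline f$ is of DM-type, $I_{\underline f} \to \underline X$ is unramified, hence so is its base change $\pi_X^* I_{\underline f} \to \Log X$; a monomorphism is unramified, so the composite $I_{\Log X/\Log Y} \to \pi_X^* I_{\underline f} \to \Log X$ is unramified, giving the claim. The crux---and, I expect, the main obstacle---is precisely the cancellation of the log automorphisms $u$: the square comparing $\Delta_{\Log(f)}$ to $\Delta_{\underline f}$ along $\pi_X$ is \emph{not} cartesian, since an isomorphism of images in $\Log Y$ need only respect the map from $M_Y$, not from $M_X$ (this is the log subtlety the paper stresses). One therefore cannot realize $\Delta_{\Log(f)}$ as a base change of the unramified $\Delta_{\underline f}$; instead the extra log automorphisms are literally the same upstairs and downstairs and die in the \emph{relative} inertia, which is what the monomorphism above records. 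Making the automorphism computation of an fs log structure precise---localizing to charts $P$, $Q$ for $X$, $Y$ if convenient, so that these groups become $\Hom\bigl(\overline{\mathcal N}^{gp}/\overline{M_X}^{gp}, \mathcal O^*\bigr)$---is the only real work; the rest is formal stability of unramified morphisms.
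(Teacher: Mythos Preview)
Your argument is correct and takes a genuinely different route from the paper's. For $X \to \Log X$, the paper simply cites that it is an open immersion; your weaker observation that it is a monomorphism already suffices. For $\Log X \to \Log Y$, the paper identifies $\Log X \times_{\Log Y} \Log X \simeq \Log(X \times^\ell_Y X)$, so that the relative diagonal becomes $\Log(\Delta^\ell_{X/Y})$, and then checks the infinitesimal lifting criterion directly: two dashed lifts along a square-zero thickening agree on underlying schemes because $\Delta_{\underline f}$ is unramified, and then agree on log structures because the fs codiagonal $(M_X \oplus^\ell_{M_Y} M_X)\to M_X$ is an epimorphism of monoid sheaves. Your approach bypasses the fs diagonal and the lifting criterion: you observe that the auxiliary log automorphism $\tau$ of $\mathcal N$ is carried verbatim by $\Log(f)$, so the relative inertia $I_{\Log X/\Log Y}$ injects into $\pi_X^* I_{\underline f}$, which is unramified by hypothesis. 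The paper's route records the useful identification of the fiber product with $\Log$ of the fs fiber product; yours is shorter and makes the mechanism---that the extra log data cancel between source and target---completely transparent.

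One small imprecision that does not affect the argument: describing $\tau$ as a homomorphism $u:\overline{\mathcal N}^{gp}\to \mathcal O_T^*$ vanishing on the image of $\overline{M_X}^{gp}$ tacitly assumes $\tau$ acts trivially on $\overline{\mathcal N}$, which need not hold for a general automorphism of an fs log structure. But you never actually use this description; all that matters is that $\Log(f)$ sends $(\sigma,\tau)\mapsto(\underline f(\sigma),\tau)$ with the \emph{same} $\tau$, which is what forces $\tau=\mathrm{id}$ in the relative inertia. You might also make explicit the (standard) step that unramifiedness of $I_{\Log X/\Log Y}\to \Log X$ implies unramifiedness of $\Delta_{\Log(f)}$: the fibers of $\Delta_{\Log(f)}$ are pseudo-torsors under the relative inertia, and a pseudo-torsor under an unramified group space is unramified.
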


\begin{proof}

The inclusion $X \subseteq \Log X$ representing strict maps is open, so it suffices to show that $\Log X \to \Log Y$ is DM-type.

We will argue that the diagonal of $\Log X \to \Log Y$ is unramified \cite[04YW]{sta}. The isomorphism $\Log X \times_{\Log Y} \Log X \simeq \Log (X \times_Y^\ell X)$ identifies the diagonal $\Delta_{\Log X/\Log Y}$ with the result of $\Log$ applied to the fs diagonal 
\[\Delta^\ell_{X/Y} : X \to X \times_Y^\ell X.\]
Any diagram: 
\[\begin{tikzcd}
S_0 \ar[r] \ar[d, hook]         &\Log X \ar[d, "\Log \Delta^\ell_{X/Y}"]         \\
S_0' \ar[r] \ar[ur, dashed, shift left=1] \ar[ur, dashed, shift right=1]      &\Log (X \times_Y^\ell X)
\end{tikzcd}\]
with $S_0 \subseteq S_0'$ a squarezero closed immersion of schemes is equivalent to a diagram
\[\begin{tikzcd}
S \ar[r] \ar[d, hook]       &X \ar[d, "\Delta^\ell_{X/Y}"]      \\
S' \ar[r]\ar[ur, dashed, shift left=1] \ar[ur, dashed, shift right=1]      &X \times_Y^\ell X
\end{tikzcd}\]
with $S \subseteq S'$ an exact closed immersion of log schemes. Composing with the fsification map $X \times_Y^\ell X \to X \times_Y X$ sends this square to 
\[\begin{tikzcd}
S \ar[r] \ar[d, hook]       &X \ar[d, "\Delta^\ell_{X/Y}"]      \\
S' \ar[r]\ar[ur, dashed, shift left=1] \ar[ur, dashed, shift right=1]      &X \times_Y X,
\end{tikzcd}\]
in which case the two dashed arrows have the same underlying scheme map because $X \to X \times_Y X$ is unramified by hypothesis. Then the maps on log structure must be the same as well, because 
\[(M_X \oplus^\ell_{M_Y} M_X)|_{S'} \to (M_X)|_{S'}\]
is an epimorphism.

\end{proof}

Recall the functor of points of the normal sheaf. 

\begin{definition}[Normal Sheaf Functor of Points]\label{ordinarynormalsheaffunctorofpoints}
Let $f: X \rightarrow Y$ be a DM morphism of algebraic stacks. Define a stack $\N{X/Y}$ over $X$ named the \textit{log normal sheaf} via its functor of points: 
\[\left\{
\begin{tikzcd}
        &\N{X/Y} \arrow[d]        \\
T \arrow[r] \arrow[ur, dashed]       &X
\end{tikzcd}
\right\} := 
\left\{
\begin{tikzcd}
(T, \OO_X|_T) \arrow[r] \arrow[d, hook, "i"]       &X \arrow[d]      \\
(T, \mathcal{A}) \arrow[r]        &Y
\end{tikzcd} \begin{tikzcd}[row sep=tiny]
i \text{ is a square-zero closed}     \\
\text{immersion with kernel }\OO_T
\end{tikzcd}
\right\}\]
\[=\left\{\begin{tikzcd}
        &       &\OO_Y|_T \arrow[d] \arrow[dr, bend left]      &       &       &\text{a squarezero algebra}   \\
0 \arrow[r]       &\OO_T \arrow[r]      &\mathcal{A} \arrow[r]        &\OO_X|_T \arrow[r]       &0      &\text{extension on \'et$(T)$}
\end{tikzcd}\right\}\]

An obstruction theory for $f$ is a fully faithful functor $\N{X/Y} \subseteq E$ into a vector bundle stack as in \cite[Corollary 3.8]{obstthies}. 

\end{definition}

The notion of ``square-zero closed immersion'' in the definition demands elaboration, since the objects involved are \'etale-locally ringed spaces. See \cite{onnormalsheaves} for details.

\begin{remark}
Suppose we specified an obstruction theory $E_\bullet \rightarrow \ccx{X/Y}$ in the sense of \cite{intrinsic}. The associated obstruction theory according to Definition \ref{ordinarynormalsheaffunctorofpoints} on $T$-points is given by:
\[\N{X/Y}(T) = \EExt(\ccx{X/Y}|_T, \OO_T) \longrightarrow E = \EExt(E_\bullet|_T, \OO_T).\]
See \cite[Corollary 4.9]{obstthies} and \cite[Chapitre VIII: Biextension de faisceaux de groupes]{sga7-1} for comparison and elaboration on $\EExt(E_\bullet, J) = \Psi_{E_\bullet}(J)$. In particular, our obstruction theories are all representable by obstruction theories in the sense of \cite{intrinsic}. 
\end{remark}

\begin{definition}[The Log Normal Sheaf]\label{defn:lognormalsheaf}
Let $f : X \to Y$ be a DM morphism of log algebraic stacks. Let $T \to X$ be an $X$-scheme. A \textit{deformation of log structures along $f$ on $T$} is a log structure $M_\mathcal{A} \to \mathcal{A}$ on the \'etale site $\text{\'et}(T)$ of $T$ with maps $(\OO_Y|_T, M_Y|_T) \to (\mathcal{A}, M_\mathcal{A}) \to (\OO_X|_T, M_X|_T)$ of log structures such that:
\begin{itemize}
    \item The kernel $\ker (\mathcal{A} \to \OO_X|_T) \simeq \OO_T$ and the diagram
    \[\begin{tikzcd}
        &       & \OO_Y|_T \arrow[d] \arrow[dr, bend left]      &       &       \\
0 \arrow[r]       &\OO_T \arrow[r]     &\mathcal{A} \arrow[r]       &\OO_X|_T \arrow[r]     &0       
    \end{tikzcd}\]
    constitutes a squarezero algebra extension. 
    \item The diagram
    \[\begin{tikzcd}
    \mathcal{A}^* \arrow[r] \arrow[d]       &\OO_X^*|_T \arrow[d]         \\
    M_\mathcal{A} \arrow[r]       &M_X|_T
    \end{tikzcd}\]
    is a pushout. 
\end{itemize}

The second bullet says that $(\mathcal{A}, M_\mathcal{A})$ is a \textit{strict} squarezero extension of $(\OO_X|_T, M_X|_T)$; compare with ``deformations of log structures'' \cite{illusielog}. The square in the second bullet is also a pullback, and $M_\mathcal{A} \to M_X|_T$ is also a torsor under $1 + \OO_T$. 

Define the \textit{log normal sheaf} to represent the deformations of log structures just defined: 
\[\left\{
\begin{tikzcd}
        &\Nl{X/Y} \arrow[d]        \\
T \arrow[r] \arrow[ur, dashed]       &X
\end{tikzcd}
\right\} := \{\text{Deformations of log structures along $f$ on $T$}\}.\]

\end{definition}

We show that this definition agrees with Definition \ref{ordinarynormalsheaffunctorofpoints} in \cite{onnormalsheaves}: $\Nl{X/Y} = \N{X/\Log Y}$.

To write down the functoriality of the log normal sheaf, we need to recall some of the machinery of log stacks found in \cite{logcotangent}. 

We denote $\Log^i := \Log^{[i]}$, the stack of $i$-simplices of fs log structures. The $j$th face map $d_j$ sends
\[(M_0 \to M_1 \to \cdots \to M_{i+1}) \mapsto \left\{\begin{tikzcd}
(M_1 \to M_2 \to \cdots \to M_{i+1})      &\text{if }j = 0\\
(M_0 \to \cdots M_{j-1} \to M_{j+1} \cdots \to M_{i+1})   &\text{if }j \neq 0, i+1       \\
(M_0 \to \cdots \to M_{i})      &\text{if }j = i+1.
\end{tikzcd}\right.\]
We write $s, t : \Log^1 \to \Log^0 = \Log$ for the ``source'' $d_1$ and ``target'' $d_0$ maps, respectively. We have an isomorphism $\Log^i = \Log^1 \times_{t, \Log, s} \Log^1 \times_{t, \Log, s} \cdots \times_{t, \Log, s}\Log^1$ ($i$ factors). 

Endow $\Log^i$ with the final tautological log structure, $M_{i+1}$ in the above. All the face maps $d_j$ are strict except $j = i+1$. 

We continue \cite{logcotangent} to use ``$\square$'' to denote the category with these objects, arrows, and relations: 
\[\begin{tikzcd}
0 \arrow[r] \arrow[d] \arrow[dr] \arrow[dr, phantom, bend left, "\circ"] \arrow[dr, phantom, bend right, "\circ"]       &1 \arrow[d]      \\
2 \arrow[r]       &3
\end{tikzcd}\]
We adopt pictorial mnemonics for fully faithful morphisms of these finite diagrams: $\begin{tikzpicture}[scale = 0.4]
\draw[dotted] (0, 0) -- (0, 1);
\draw[dotted] (0, 0) -- (1, 0);
\draw (0, 1) -- (1, 0);
\draw (1, 0) -- (1, 1);
\draw (0, 1) -- (1, 1);
\end{tikzpicture}$ means the functor $[2] \subseteq \square$ avoiding 2, etc.

\begin{definition}[{Compare \cite[Lemma 3.12]{logcotangent}}]\label{cocartesianmonoidsquares}
Define $\glob := \Log^1 \times_{t, \Log, t}^\ell \Log^1$. Given a scheme $T$, the points of this stack are cocartesian squares of \textit{fs} log structures:
\[\glob(T) := \left\{\begin{tikzcd}
M_0 \arrow[r] \arrow[d] \arrow[dr, phantom, very near end, "\ell \lrcorner"]         &M_1 \arrow[d]        \\
M_2 \arrow[r]         &M_3.
\end{tikzcd}\right\}\]
This is the ``fsification'' of the ordinary pullback $\Log^1 \times_{t, \Log, t} \Log^1$, endowed with the non-fs pushout $M_1 \oplus_{M_0}^{mon} M_2$ of the universal log structures. 

The natural embedding $\glob \to \Log^\square$ exhibits the squares which are cocartesian as an open substack, as we'll record in Lemma \ref{olssoncotcplxfsversion}. 

For a morphism $q : Y' \to Y$ of log algebraic stacks, we obtain relative variants:
\[\glob_q := \glob \times_{\begin{tikzpicture}[scale = 0.2]
    \draw[dotted] (0, 0) -- (0, 1);
    \draw[dotted] (0, 0) -- (1, 0);
    \draw[dotted] (1, 0) -- (1, 1);
    \draw (0, 1) -- (1, 1);
    \end{tikzpicture}, \:\: \Log^1} Y',       \quad \quad \quad       \Log^\square_q := \Log^\square \times_{\begin{tikzpicture}[scale = 0.2]
    \draw[dotted] (0, 0) -- (0, 1);
    \draw[dotted] (0, 0) -- (1, 0);
    \draw[dotted] (1, 0) -- (1, 1);
    \draw (0, 1) -- (1, 1);
    \end{tikzpicture}, \:\: \Log^1} Y'.\]
The fs pullback here agrees with the ordinary one because $Y' \to \Log^1$ is strict. The points of these stacks over some scheme $T$ are squares
\[\begin{tikzcd}
M_Y|_T \arrow[r] \arrow[d]     &M_{Y'}|_T \arrow[d]      \\
M_0 \arrow[r]     &M_1,
\end{tikzcd}\]
with those of $\glob_q$ required to be cocartesian. 

\end{definition}

\begin{lemma}\label{fsisopenimmersioninfinelstrs}
Let $\Log^{arb fine}$ denote the stack of log structures which are fine but not necessarily saturated. The natural monomorphism 
\[\Log \hookrightarrow \Log^{arb fine}\]
is an open immersion. 
\end{lemma}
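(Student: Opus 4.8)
The plan is to verify that $\Log \hookrightarrow \Log^{arb fine}$ is an open immersion after base change along an arbitrary map $T \to \Log^{arb fine}$ from a scheme; as open immersions can be checked on a smooth atlas, this suffices. Such a map is the datum of a fine log structure $M_T$ on $T$, and since the inclusion is a monomorphism (fs log structures form a full subgroupoid of fine ones), the fiber product $T \times_{\Log^{arb fine}} \Log$ is the subfunctor of $T$ sending $g : T' \to T$ to the condition that $g^* M_T$ be saturated. I would first invoke two standard facts to make this subfunctor geometric: a fine log structure is fs precisely when each characteristic stalk $\overline{M}_{\bar{t}}$ is a saturated monoid, and characteristic stalks are insensitive to pullback, $\overline{g^* M_T}_{\bar{t}'} = \overline{M}_{T, g(\bar{t}')}$. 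Consequently $g^* M_T$ is fs if and only if $g$ factors set-theoretically through $U := \{ t \in T : \overline{M}_{T, \bar{t}} \text{ is saturated} \}$.

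The heart of the matter is that $U$ is open, which I would prove \'etale-locally. Shrinking $T$, assume $M_T$ carries a chart $P \to M_T$ by a fine monoid $P$ with structure map $\alpha : P \to \mathcal{O}_T$. At a geometric point the characteristic stalk is the face quotient $\overline{M}_{T, \bar{t}} = P / F_{\bar{t}}$, where $F_{\bar{t}} = \{ p \in P : \alpha(p) \text{ is invertible at } \bar{t} \}$. The monoid $P$ has only finitely many faces, and for each face $F$ I set $U_F := \{ t \in T : \alpha(p) \text{ is invertible at } t \text{ for all } p \in F \}$, which is open since it is the non-vanishing locus of the finitely many sections $\alpha(p_1), \dots, \alpha(p_m)$ attached to generators of $F$. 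The claim is that
\[ U = \bigcup_{F : \, P/F \text{ saturated}} U_F. \]
Indeed, for $t \in U_F$ one has $F \subseteq F_{\bar{t}}$, so $\overline{M}_{T, \bar{t}} = (P/F) / (F_{\bar{t}}/F)$ is the quotient of the saturated monoid $P/F$ by a face and is therefore saturated; conversely, if $t \in U$ then $F := F_{\bar{t}}$ satisfies $P/F$ saturated and $t \in U_F$ tautologically. This exhibits $U$ as a union of opens, the only monoid-theoretic input being that a quotient of a saturated monoid by a face is again saturated.

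It then remains to assemble the pieces: the open subscheme $U \hookrightarrow T$ represents $T \times_{\Log^{arb fine}} \Log \to T$, because by the first paragraph $g^* M_T$ is fs exactly when $g$ lands in $U$. Since $T$ was arbitrary, $\Log \hookrightarrow \Log^{arb fine}$ is representable by open immersions, i.e.\ an open immersion. The single substantive step is the openness of the fs-locus in the second paragraph; the rest is bookkeeping with the functor of points. The point demanding care there is the interface between charts and characteristic stalks---one must ensure the face description $\overline{M}_{T, \bar{t}} = P/F_{\bar{t}}$ is legitimate (it is, as $P \to M_T$ is a chart) and that saturatedness genuinely descends to the characteristic, which is exactly where the monoid lemma ``quotient by a face preserves saturation'' carries the weight.
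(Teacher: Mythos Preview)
Your argument is correct. Both your proof and the paper's reduce to showing that the fs-locus $U$ in a scheme $T$ carrying a fine log structure is open, and both ultimately hinge on the same monoid fact that quotienting (equivalently, localizing) a saturated monoid by a face preserves saturation. The difference lies in how openness of $U$ is extracted from this fact: the paper invokes the stratification theorem \cite[Theorem II.2.5.4]{ogusloggeom} to see that $U$ is constructible, then uses the face-localization description of cospecialization maps to show $U$ is closed under generization, concluding via the ``constructible $+$ closed under generization $\Rightarrow$ open'' principle. You instead write $U$ directly as the finite union $\bigcup_{F:\, P/F \text{ saturated}} U_F$ of explicit opens indexed by faces of a chart monoid. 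Your route is more self-contained---it avoids importing the stratification machinery and the topological lemma---while the paper's route has the virtue of making the stratified structure of fine log schemes do the work. Either way the content is the same; your version just unpacks it by hand.
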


\begin{proof}

Consider some scheme $X$ and pullback diagram
\[\begin{tikzcd}
X^{fs} \arrow[r] \arrow[d] \arrow[dr, phantom, very near start, "\ulcorner"]      &\Log  \arrow[d]     \\
X \arrow[r]       &\Log^{arbfine}
\end{tikzcd}\]

Then $X^{fs} \hookrightarrow X$ is a monomorphism, the locus where the stalks of $M_X$ are saturated. After passing to an open cover of $X$, \cite[Theorem II.2.5.4]{ogusloggeom} provides us with a locally finite stratification $X = \bigsqcup \limits_{\sigma \in \Sigma} X_\sigma$ where 
\begin{itemize}
    \item For each $\sigma \in \Sigma$, $\overline{M}_X|_\sigma$ is constant. 
    \item The cospecialization maps for $x \in \overline{\{\xi\}} \subseteq X$ 
    \[\overline{M}_x \rightarrow \overline{M}_\xi\]
    are localizations at faces. 
\end{itemize}
The localization of a saturated monoid remains saturated \cite[Remark I.1.4.5]{ogusloggeom} and a monoid is saturated if and only if its characteristic monoid is \cite[Proposition I.1.3.5]{ogusloggeom}. We then have that $X^{fs} \subseteq X$ is locally a constructible subset which is closed under generization, and hence open \cite[Tag 0542]{sta}. 

\end{proof}

We collect several results of \cite{logcotangent} adapted to the fs setting:

\begin{lemma}[{\cite[Theorem 2.4, Proposition 2.11, Lemma 3.12]{logcotangent}}]\label{olssoncotcplxfsversion}
These statements remain true in the \textit{fs} context:
\begin{enumerate}
    \item\label{olssoncotcplxfinitediagramsoffslstrsisastack} For any finite category $\Gamma$, the fibered category $\Log^\Gamma$ of diagrams of fs log structures indexed by $\Gamma$ is an algebraic stack. 
    
    \item\label{olssoncotcplxsimplicialetalestack} The simplicial face maps $d_j : \Log^{i+1} \rightarrow \Log^i$ are strict, \'etale, and DM-type for $j \leq i$. 
    
    \item If $[1] \rightarrow \square$ avoids the initial object $0$ ( $\begin{tikzpicture}[scale = 0.3]
    \draw[dotted] (0, 0) -- (0, 1);
    \draw (0, 0) -- (1, 0);
    \draw[dotted] (1, 0) -- (1, 1);
    \draw[dotted] (0, 1) -- (1, 1);
    \end{tikzpicture} \text{ or }
    \begin{tikzpicture}[scale = 0.3]
    \draw[dotted] (0, 0) -- (0, 1);
    \draw[dotted] (0, 0) -- (1, 0);
    \draw (1, 0) -- (1, 1);
    \draw[dotted] (0, 1) -- (1, 1);
    \end{tikzpicture}$ ), it induces a strict \'etale, DM-type morphism
    \[\Log^\square \rightarrow \Log^1. \]
    
    \item\label{olssoncotcplxetalemapsfor2simplex} If $[2] \rightarrow \square$ omits either $1$ or $2$ ( $\begin{tikzpicture}[scale = 0.3]
    \draw (0, 0) -- (0, 1);
    \draw (0, 0) -- (1, 0);
    \draw (0, 1) -- (1, 0);
    \draw[dotted] (1, 0) -- (1, 1);
    \draw[dotted] (0, 1) -- (1, 1);
    \end{tikzpicture} \text{ or }
    \begin{tikzpicture}[scale = 0.3]
    \draw[dotted] (0, 0) -- (0, 1);
    \draw[dotted] (0, 0) -- (1, 0);
    \draw (0, 1) -- (1, 0);
    \draw (1, 0) -- (1, 1);
    \draw (0, 1) -- (1, 1);
    \end{tikzpicture}$ ), it induces an \'etale, DM-type morphism
    \[\Log^\square \rightarrow \Log^2.\]
    
    \item The map $\glob \subseteq \Log^\square$ is an open embedding.

    \item\label{olssonsquareconstruction} Given an fs pullback square
    \[\begin{tikzcd}
    X' \arrow[r] \arrow[d] \lpb      &X \arrow[d]      \\
    Y' \arrow[r, "q"]      &Y,
    \end{tikzcd}\]
    the associated square of stacks
    \[\begin{tikzcd}
    X' \arrow[r] \arrow[d] \arrow[phantom, very near start, "\ulcorner", dr]      &X \arrow[d]      \\
    \glob_q \arrow[r]       &\Log Y
    \end{tikzcd}\]
    is a pullback. 
\end{enumerate}
\end{lemma}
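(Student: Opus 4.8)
The plan is to deduce each fs statement from Olsson's corresponding result for the larger stack $\Log^{arbfine}$ of fine (not necessarily saturated) log structures, using Lemma~\ref{fsisopenimmersioninfinelstrs} as the bridge. For a finite diagram category $\Gamma$, the evaluation maps $\mathrm{ev}_v \colon \Log^{arbfine,\Gamma} \to \Log^{arbfine}$ at the objects $v$ of $\Gamma$ let me write the locus of diagrams all of whose vertices are saturated as the finite intersection $\bigcap_v \mathrm{ev}_v^{-1}(\Log)$; since a morphism of fs log structures is simply a morphism of fine ones, this locus is exactly $\Log^\Gamma$. By Lemma~\ref{fsisopenimmersioninfinelstrs} each $\mathrm{ev}_v^{-1}(\Log)$ is open, so $\Log^\Gamma \hookrightarrow \Log^{arbfine,\Gamma}$ is an open immersion; since \cite[Theorem 2.4]{logcotangent} makes $\Log^{arbfine,\Gamma}$ algebraic and open substacks of algebraic stacks are algebraic, this proves (1) and supplies the open-substack presentations used below.

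For (2)--(4) I would first dispatch strictness by inspection: each face map in (2) with $j \le i$, and each map in (3), retains the terminal vertex of the diagram and hence the final tautological log structure, so it is strict. For the \'etale and DM-type assertions, observe that every map in question is the restriction of Olsson's corresponding map between the fine stacks to the open fs-substacks of source and target, and that an all-fs diagram restricts to an all-fs diagram, so the source open lands in the target open. Thus each fs map $f$ fits into $U \xrightarrow{f} V \hookrightarrow \Log^{arbfine,\bullet}$ whose composite is \'etale (resp.\ DM-type), being Olsson's map precomposed with an open immersion, while $V \hookrightarrow \Log^{arbfine,\bullet}$ is itself an open immersion. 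Cancellation for \'etale morphisms ($g \circ f$ \'etale and $g$ \'etale imply $f$ \'etale), and the analogous cancellation through the monomorphism $V \hookrightarrow \Log^{arbfine,\bullet}$ for the unramified diagonal, then give that $f$ is \'etale and DM-type.

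The one place this restriction strategy breaks down, and the main obstacle, is (5): an fs square is fs-cocartesian when $M_3 \simeq M_1 \oplus^\ell_{M_0} M_2$ is the \emph{saturated} pushout, which essentially never coincides with the fine pushout, so $\glob$ is not the intersection of Olsson's fine-cocartesian locus with $\Log^\square$. I would instead argue directly, mirroring the proof of Lemma~\ref{fsisopenimmersioninfinelstrs}. Over $\Log^\square$ form the canonical comparison map $\alpha \colon M_1 \oplus^\ell_{M_0} M_2 \to M_3$ of fs log structures; the cocartesian locus is exactly where $\alpha$ is an isomorphism, equivalently (restricting to $\OO^*$ is the identity) where $\overline{\alpha}$ is an isomorphism of characteristic monoids. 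Applying the stratification of \cite[Theorem II.2.5.4]{ogusloggeom} as before, both $\overline{M_1 \oplus^\ell_{M_0} M_2}$ and $\overline{M_3}$ are constant on strata with cospecialization maps given by localization at faces; since localizing an isomorphism gives an isomorphism, the iso-locus is constructible and stable under generization, hence open by \cite[Tag 0542]{sta}. Together with the fact that $\glob \to \Log^\square$ is a monomorphism (an fs-cocartesian square is determined up to unique isomorphism by its span), this exhibits $\glob$ as an open substack.

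Finally, (6) is a comparison of functors of points. A $T$-point of $X \times_{\Log Y} \glob_q$ is a map $T \to X$ together with a cocartesian square whose top arrow is $q$ and whose bottom-left corner is identified with $M_X|_T$; its bottom-right corner is then forced to be $M_X|_T \oplus^\ell_{M_Y|_T} M_{Y'}|_T$. This is precisely the data of a $T$-point of the fs fiber product $X' = X \times^\ell_Y Y'$, whose log structure is by definition this saturated pushout, so the square is cartesian. I expect (5) to require the most care, since it is the only part where the discrepancy between fine and saturated pushouts forces a genuinely new argument rather than a transfer along the open immersion of Lemma~\ref{fsisopenimmersioninfinelstrs}.
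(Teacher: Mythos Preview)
Your proposal is correct and follows the paper's approach: parts (1)--(4) are deduced from Olsson's fine-log results via the open immersion of Lemma~\ref{fsisopenimmersioninfinelstrs}, while (5)--(6) are argued directly in the fs category. The paper's own proof is extremely terse---it simply says the last two ``follow by the same arguments applied in the fs category''---so your explicit stratification argument for (5) and functor-of-points check for (6) amount to spelling out what the paper leaves to the reader.
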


\begin{proof}

Facts \eqref{olssoncotcplxfinitediagramsoffslstrsisastack} through \eqref{olssoncotcplxetalemapsfor2simplex} are immediate by Lemma \ref{fsisopenimmersioninfinelstrs} and the analogous facts in \cite{logcotangent}. The last two follow by the same arguments applied in the fs category.

\end{proof}

\begin{remark}\label{rmk:logetfacemapslogstacks}
Apply $\Log$ once more to the map $\Log Y \to Y$: one gets 
\[d_1 : \Log^2 Y \to \Log Y \quad (M_Y \to M_0 \to M_1) \mapsto (M_Y \to M_1).\]
The result is \'etale, so the original $d_1 : \Log Y \to Y$ is log \'etale \cite[Theorem 4.6 (ii)]{logstacks}. The same reasoning concludes $d_{i+1} : \Log^{i+1} Y \to \Log^i Y$ is log \'etale in general. In summary, all the face maps are log \'etale and all but $j=i+1$ are furthermore strict \'etale. 

\end{remark}

\begin{remark}\label{squaresandglobareetaleoverthesource}
Given $q : Y' \rightarrow Y$ DM, the natural maps
    \[\glob_q \subseteq \Log^\square_q \rightarrow \Log Y'\]
    are \'etale. The second map is the product of the \'etale map
    \[\begin{tikzpicture}[scale = 0.3]
    \draw[dotted] (0, 0) -- (0, 1);
    \draw[dotted] (0, 0) -- (1, 0);
    \draw (1, 0) -- (1, 1);
    \draw (0, 1) -- (1, 1);
    \draw (0, 1) -- (1, 0);
    \end{tikzpicture}^* : \Log^\square \rightarrow \Log^2\]
    over $\Log^1$ (via \begin{tikzpicture}[scale = 0.3]
    \draw[dotted] (0, 0) -- (0, 1);
    \draw[dotted] (0, 0) -- (1, 0);
    \draw[dotted] (1, 0) -- (1, 1);
    \draw (0, 1) -- (1, 1);
    \end{tikzpicture} ) with $Y'$. 

\end{remark}

\begin{definition}\label{olssonmors}
Use Lemma \ref{olssoncotcplxfsversion}, bullet \eqref{olssonsquareconstruction} to turn one commutative square of DM maps into another: 
\[\begin{tikzcd}
X' \arrow[r] \arrow[d]      &X \arrow[d]     \\
Y' \arrow[r, "q"]       &Y.
\end{tikzcd} \quad \rightsquigarrow \quad \begin{tikzcd}
X' \arrow[r] \arrow[d]      &X \arrow[d]      \\
\Log^\square_q \arrow[r]       &\Log Y
\end{tikzcd}\]

Maps of normal sheaves 
\[\varphi : \Nl{X'/Y'} \simeq \N{X/\Log^\square_q} \rightarrow \Nl{X/Y}\]
arise from Remark \ref{squaresandglobareetaleoverthesource} and the second square. We call the composite $\varphi$ \textit{Olsson's Morphism}. 

\end{definition}

\begin{remark}\label{remarkfspullbacksinduceclimmsoflognormalsheaves}
In Definition \ref{olssonmors}, if the first square was an fs pullback square, the second factors:
\[\begin{tikzcd}
X' \arrow[rr] \arrow[d] \arrow[drr, phantom, very near start, "\ulcorner"]      & &X \arrow[d]      \\
\glob_q \arrow[r, hook]     &\Log^\square_q \arrow[r]          &\Log Y.
\end{tikzcd}\]
Since this square is a pullback, Olsson's morphism 
\[\varphi: \Nl{X'/Y'} \simeq \N{X'/\Log^\square_q} \simeq \N{X'/\glob_q} \hookrightarrow \Nl{X/Y}|_{X'}\]
is then a closed immersion. 

If $q$ or $f$ is also log flat, $\varphi$ might not be an isomorphism. See Lemmas \ref{lem:strsmloctargsmloccone}, \ref{lem:strsmlocsrcsmloccone} for the strict case.

\end{remark}

\begin{remark}\label{strictolssonmorsareordinary}
A commutative square of DM maps may be factored: 
\begin{equation}\label{eqn:commsqstrolssonmorsareordinary}
\begin{tikzcd}
X' \arrow[r] \arrow[d]      &X \arrow[d, "f"]      \\
Y' \arrow[r, "q"]      &Y.
\end{tikzcd}\quad \rightsquigarrow \quad 
\begin{tikzcd}
X' \ar[r] \ar[d]          &X \ar[d]      \\
\Log^\square_q \ar[r] \ar[d]      &\Log Y \ar[d]         \\
Y' \arrow[r]      &Y.
\end{tikzcd}
\end{equation}

This induces a commutative square of normal sheaves:
\begin{equation}\label{eqn:normalsheavesolssonvsordinary}
\begin{tikzcd}
\Nl{X'/Y'} \simeq \N{X'/\Log^\square_q} \ar[r] \ar[d] \ar[dr, phantom, "\circ"]        &\Nl{X/Y} \ar[d]           \\
\N{X'/Y'} \ar[r]       &\N{X/Y}.
\end{tikzcd}
\end{equation}
The Olsson morphisms are thereby seen to be compatible with the ordinary functoriality of the normal sheaf via the forgetful maps $\Nl{X/Y} \to \N{X/Y}$. 

Now suppose the original square \eqref{eqn:commsqstrolssonmorsareordinary} is an fs pullback:
\begin{itemize}
    \item If $q$ is strict, then $\glob_q \simeq \Log Y'$, and our fs pullback square factors as
    \[\begin{tikzcd}
    X' \arrow[r] \arrow[d] \arrow[dr, phantom, very near start, "\ulcorner"]      &X  \arrow[d]     \\
    \glob_q \simeq \Log Y' \arrow[r] \arrow[d] \arrow[dr, phantom, very near start, "\ulcorner"]        &\Log Y \arrow[d]     \\
    Y' \arrow[r]      &Y,
    \end{tikzcd}\]
    and the functor of points witnesses that \eqref{eqn:normalsheavesolssonvsordinary} is cartesian. 
    \item If instead $f$ is strict, then $X' \rightarrow \glob_q$ factors through $Y'$, and the factorization
    \[\begin{tikzcd}
    X' \arrow[r] \arrow[d] \arrow[phantom, dr, very near start, "\ulcorner"]      &X  \arrow[d]     \\
    Y' \arrow[d] \arrow[r] \arrow[phantom, dr, very near start, "\ulcorner"]      &Y \arrow[d, hook]      \\
    \glob_q \arrow[r]       &\Log Y
    \end{tikzcd}\]
    shows that the vertical arrows of \eqref{eqn:normalsheavesolssonvsordinary} are isomorphisms and the Olsson Morphism is the same as the ordinary functoriality of the Normal Sheaf. 
\end{itemize}

\end{remark}

\begin{remark}\label{rmk:compareolssonmorandlogsoneverything}
Given a commutative square 
\[\begin{tikzcd}
X' \arrow[r] \arrow[d]      &X \arrow[d]      \\
Y' \arrow[r, "q"]      &Y, 
\end{tikzcd}\]
of DM maps we can form two other commutative squares out of it: 
\[\begin{tikzcd}
X' \arrow[r] \arrow[d]      &X \arrow[d]      \\
\Log^\square_q \arrow[r]      &\Log Y,
\end{tikzcd}
\begin{tikzcd}
X' \arrow[r] \arrow[d]         &\Log X \arrow[d]         \\
\Log Y' \arrow[r]         &\Log Y.
\end{tikzcd}\]
They induce morphisms
\[\Nl{X'/Y'} \simeq \Nl{X'/\Log^\square_q} \to \Nl{X/Y}|_{X'},\]
\[\Nl{X'/Y'} \to N_{\Log X/\Log Y}|_{X'}.\]

Form the diagram 
\[\begin{tikzcd}
X' \arrow[r] \arrow[d]      &\Log X \arrow[r, "s"] \arrow[d] \pb         &X \arrow[d]      \\
\Log_q^\square \arrow[d] \arrow[r, "{\begin{tikzpicture}[scale = 0.3]
    \draw[dotted, -] (0, 0) -- (0, 1);
    \draw[dotted, -] (0, 0) -- (1, 0);
    \draw[-] (0, 1) -- (1, 0);
    \draw[-] (1, 0) -- (1, 1);
    \draw[-] (0, 1) -- (1, 1);
    \end{tikzpicture}}"] \arrow[rr, bend right, "{\begin{tikzpicture}[scale = 0.3]
    \draw[dotted, -] (0, 0) -- (0, 1);
    \draw[dotted, -] (0, 0) -- (1, 0);
    \draw[-] (1, 0) -- (1, 1);
    \draw[dotted, -] (0, 1) -- (1, 1);
    \end{tikzpicture}}"]     &\Log^2 Y \ar[r, "d_0"]       &\Log Y         \\
\Log Y'
\end{tikzcd}\]
to see that the two morphisms of normal sheaves are compatible: 
\[\Nl{X'/Y'} \simeq N_{X'/\Log_q^\square} \to N_{\Log X/\Log^2 Y}|_{X'} \subseteq \Nl{X/Y}|_{X'}.\]

\end{remark}

\begin{lemma}\label{compatibleolssonmors}
Suppose given a pair of commutative squares: 
\[\begin{tikzcd}
X' \arrow[r] \arrow[d]     &Y' \arrow[r] \arrow[d]     &Z' \arrow[d]      \\
X \arrow[r, "f"]     &Y \arrow[r, "g"]     &Z      
\end{tikzcd}\]
of DM-type maps. The diagram 
\[\begin{tikzcd}
        &\Nl{Y/Y'} \arrow[dr]      \\
\Nl{X/X'} \arrow[ur] \arrow[rr]       &       &\Nl{Z/Z'}
\end{tikzcd}\]
commutes, where all the arrows are Olsson's morphisms. 

\end{lemma}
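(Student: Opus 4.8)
Write $\alpha \colon \Nl{X/X'} \to \Nl{Y/Y'}$, $\beta \colon \Nl{Y/Y'} \to \Nl{Z/Z'}$, and $\gamma \colon \Nl{X/X'} \to \Nl{Z/Z'}$ for the three Olsson morphisms appearing in the triangle; the claim is that $\beta \circ \alpha = \gamma$ (as maps over the common source $X'$). The plan is to peel each morphism into its two constituent operations and show that both are functorial. Recall from Definition~\ref{olssonmors} that for a square with bottom edge $q$ the Olsson morphism is the composite of (i) the isomorphism $\Nl{A'/B'} = \N{A'/\Log B'} \simeq \N{A'/\Log^\square_q}$, valid because $\Log^\square_q \to \Log B'$ is étale (Remark~\ref{squaresandglobareetaleoverthesource}) and the normal sheaf is insensitive to étale localization of the target, and (ii) the ordinary normal-sheaf functoriality map $\N{A'/\Log^\square_q} \to \N{A/\Log B} = \Nl{A/B}$ attached to the square with bottom edge $\Log^\square_q \to \Log B$. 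Since ordinary normal-sheaf functoriality is compatible with horizontal composition of commutative squares, the entire content of the lemma reduces to the compatibility of the three auxiliary $\Log^\square$-constructions.

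First I would assemble the two given squares into a single diagram over the simplicial stacks $\Log^\bullet$. The left square produces $X' \to \Log^\square_f \to \Log Y$ (with $\Log^\square_f \to \Log X$ étale), the right square produces $Y' \to \Log^\square_g \to \Log Z$, and the outer rectangle is governed by $X' \to \Log^\square_{gf} \to \Log Z$. The point is to fit these into one commutative prism whose square faces are the cartesian squares of Lemma~\ref{olssoncotcplxfsversion}\eqref{olssonsquareconstruction} and whose structural projections are the étale face maps $d_j$ together with the étale morphisms $\Log^\square \to \Log^2$ of Lemma~\ref{olssoncotcplxfsversion}\eqref{olssoncotcplxsimplicialetalestack},\eqref{olssoncotcplxetalemapsfor2simplex}. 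Concretely, the two stacked squares are classified by a map into a fiber product of two copies of $\Log^\square$ over the shared middle edge $\Log^1$, lying over $\Log^2$ via the chain $X \to Y \to Z$; contracting that middle edge by the face map $d_1 \colon \Log^2 \to \Log^1$ recovers $\Log^\square_{gf}$ and the composite $X \to Z$. I expect building this prism --- pinning down exactly which faces and étale projections of $\Log^2$ and $\Log^3$ express the composite construction, and checking the relevant simplicial identities --- to be the main obstacle; the remaining ingredients are formal.

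Granting the prism, the triangle commutes by uniqueness of étale lifts. Unwinding the functor of points of Definition~\ref{defn:lognormalsheaf}, a $T$-point of $\Nl{X/X'}$ is a strict square-zero deformation of log structures; $\alpha$ carries it to the unique lift of its classifying map along the étale map $\Log^\square_f \to \Log X$, pushed forward along $\Log^\square_f \to \Log Y$, and similarly for $\beta$ and $\gamma$. Because the comparison map of the outer rectangle factors, inside the prism, through those of the two inner squares, the lift computing $\beta \circ \alpha$ and the lift computing $\gamma$ solve the same étale lifting problem; uniqueness of such lifts forces them to coincide, naturally in $T$, whence $\beta \circ \alpha = \gamma$. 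Alternatively, one may invoke Remarks~\ref{strictolssonmorsareordinary} and~\ref{rmk:compareolssonmorandlogsoneverything} to identify each Olsson morphism with the ordinary functoriality of $\N{\Log(-)/\Log(-)}$ and then cite the standard composition law for the ordinary normal sheaf, which again reduces everything to the prism of the previous paragraph.
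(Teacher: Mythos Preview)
Your approach is essentially the same as the paper's: the paper introduces exactly the auxiliary stack you anticipate, namely $\rectglob := (\Log^\square \times_{\Log^1} \Log^\square)\times_{\Log^2} X$, whose points are $2\times 3$ rectangles of fs log structures extending the bottom row $M_Z\to M_Y\to M_X$, and then observes that all triangles in the resulting diagram of normal sheaves commute by the definition of the Olsson morphisms and the functor of points of $\N{}$. The only difference is tone: what you flag as ``the main obstacle'' (building the prism and checking the simplicial identities) the paper treats as a one-line definition, since the needed \'etaleness of the projections $\rectglob \to \Log^\square_f,\ \Log^\square_g,\ \Log^\square_{gf}$ follows immediately from Lemma~\ref{olssoncotcplxfsversion} and Remark~\ref{squaresandglobareetaleoverthesource}.
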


\begin{proof}

Introduce an algebraic $X$-stack $\rectglob$, with functor of points: 
\[\left\{\begin{tikzcd}
        &\rectglob \arrow[d]        \\
T \arrow[r] \arrow[ur, dashed]       &X
\end{tikzcd}\right\} :=
\left\{\begin{tikzcd}
M_2      &M_1 \arrow[l]        &M_0 \arrow[l]       \\
M_X|_T \arrow[u]     &M_Y|_T \arrow[l] \arrow[u]        &M_Z|_T \arrow[l] \arrow[u] 
\end{tikzcd}
\begin{tikzcd}[row sep=tiny]
\text{commutative diagrams of }     \\
\text{fs log structures on }T
\end{tikzcd}\right\}\]
In other words, $\rectglob := (\Log^\square \times_{\Log^1} \Log^\square)\times_{\Log^2} X$. 

All the triangles in this diagram commute because of the definition of Olsson morphisms and the functor of points of $\N{}$: 
\[\begin{tikzcd}
        &       &\N{X/\Log^\square_{g \circ f}} \arrow[ddrr, bend left]       &       &       \\
        &       &       &       &       \\
\Nl{X'/X} \arrow[rr] \arrow[d, no head, "\sim"] \arrow[uurr, bend left, no head, "\sim"]       &       &\N{X/\rectglob} \arrow[rr] \arrow[uu] \arrow[dd] \arrow[lld]       &       &\Nl{Z'/Z}       \\
\N{X'/\Log^\square_f} \arrow[drr, bend right]        &       &       &       &       \\
        &       &\N{Y/\Log^\square_g} \arrow[uurr]       &\Nl{Y'/Y} \arrow[l, no head, "\sim"] \arrow[uur, bend right]       &       
\end{tikzcd}\]

Restricting the diagram to $\Nl{X'/X}$, $\Nl{Y'/Y}$, and $\Nl{Z'/Z}$, we get the result.

\end{proof}

\begin{proposition}\label{exactseqfornormalsheaves}
Given $X \overset{f}{\rightarrow} Y \overset{g}{\rightarrow} Z$ DM-type maps of log algebraic stacks, the Olsson Morphisms yield a complex of stacks
\[\Nl{X/Y} \rightarrow \Nl{X/Z} \rightarrow \Nl{Y/Z}|_X,\]
in that the composite factors through the vertex. 

If $h$ is smooth, $\Nl{Y/Z} = B\Tl{Y/Z}$ and rotating the triangle in the derived category yields an exact sequence of cone stacks: 
\[\Tl{Y/Z}|_X \to \Nl{X/Y} \to \Nl{X/Z}. \]

\end{proposition}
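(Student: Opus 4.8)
The plan is to prove the complex statement first and then identify the special smooth case. Recall that the Olsson morphisms in Definition~\ref{olssonmors} arise by applying the construction of Lemma~\ref{olssoncotcplxfsversion}\eqref{olssonsquareconstruction} to commutative squares. For the chain $X \overset{f}{\to} Y \overset{g}{\to} Z$, I would apply these to the two squares obtained by pulling back along $f$ and $g$ and assemble them using the compatibility already established in Lemma~\ref{compatibleolssonmors}. The point is that the three Olsson morphisms $\Nl{X/Y} \to \Nl{X/Z}$, $\Nl{X/Z} \to \Nl{Y/Z}|_X$, and their composite all fit into the commuting diagram built from the stack $\rectglob = (\Log^\square \times_{\Log^1} \Log^\square) \times_{\Log^2} X$ appearing in that lemma. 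So the ``complex'' claim — that the composite $\Nl{X/Y} \to \Nl{X/Z} \to \Nl{Y/Z}|_X$ factors through the vertex (the zero section over $X$) — should follow by translating the identification $\Nl{X/Y} = \N{X/\Log Y}$ of \cite{onnormalsheaves} into the ordinary setting, where the analogous statement is the standard fact that $N_{X/Y} \to N_{X/Z} \to N_{Y/Z}|_X$ is a complex of cone stacks coming from the transitivity triangle of cotangent complexes.

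Concretely, I would reduce everything to the ordinary (strict) normal-sheaf statement via the replacement $Y \rightsquigarrow \Log^\square_g$, etc., so that the log Olsson morphisms become ordinary functoriality morphisms of normal sheaves as in Remark~\ref{strictolssonmorsareordinary}. Then the desired complex is precisely the image, under the functor $\EExt(-, \OO)$, of the transitivity triangle $\ccx{X/Y} \to \ccx{X/Z} \to \ccx{Y/Z}|_X$ for the relevant strict maps; applying $\N{} = \EExt(\ccx{}, \OO)$ turns the distinguished triangle into the asserted complex of cone stacks. The functor-of-points description of $\N{}$ in Definition~\ref{ordinarynormalsheaffunctorofpoints} makes the ``factors through the vertex'' assertion transparent: a deformation of $X$ inside $Y$ that is then regarded inside $Z$ restricts trivially on $Y$, so its image in $\Nl{Y/Z}|_X$ is the canonical (zero) deformation.

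For the second statement, I would invoke the smoothness hypothesis (here I read ``$h$ smooth'' as the hypothesis that $g$ is log smooth, so that the relevant relative object is a log tangent bundle). When $g$ is log smooth, the relative log cotangent complex $\lccx{Y/Z}$ is concentrated in degree zero and is the locally free log differentials $\lkah{Y/Z}$, so the log normal sheaf $\Nl{Y/Z}$ is a vector bundle stack, namely the classifying stack $B\Tl{Y/Z}$ of the log tangent bundle. In the derived category the transitivity triangle then reads as a short exact triangle with $\lccx{Y/Z}|_X$ in degree zero; rotating it and dualizing via $\EExt(-, \OO)$ converts the complex of the first part into the advertised exact sequence of cone stacks
\[\Tl{Y/Z}|_X \to \Nl{X/Y} \to \Nl{X/Z}.\]

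The main obstacle I anticipate is not the formal homological algebra but justifying that the reduction to the ordinary strict case is legitimate and compatible on the nose. Specifically, the Olsson morphisms are defined through the auxiliary stacks $\Log^\square_q$ and $\glob_q$, and I must be careful that the two squares for $f$ and $g$ glue along $Y$ in a way that matches the transitivity data for cotangent complexes, rather than merely matching up to a non-canonical isomorphism. This is exactly the coherence that Lemma~\ref{compatibleolssonmors} was built to supply, via the rectangle stack $\rectglob$; so I expect the real work to be checking that the identifications $\Nl{X/Y} \simeq \N{X/\Log^\square_f}$ and the exactness of the resulting strict transitivity triangle are preserved after restriction to $X$ and after passing from log structures to the underlying deformation-theoretic extensions. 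Once that compatibility is in hand, both the complex and its smooth refinement follow formally.
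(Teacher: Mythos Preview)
Your proposal is correct, but it takes a considerably more elaborate route than the paper. The paper's argument for the first claim is a two-line diagram chase: apply the functoriality of the Olsson morphisms (Lemma~\ref{compatibleolssonmors}) to the pair of squares
\[\begin{tikzcd}
X \arrow[r, equals] \arrow[d]       &X \arrow[r] \arrow[d]      &Y \arrow[d]      \\
Y \arrow[r]       &Z \arrow[r, equals]      &Z
\end{tikzcd}\]
to obtain the commutative square
\[\begin{tikzcd}
\Nl{X/Y} \arrow[r] \arrow[d]        &\Nl{Y/Y}|_X = X \arrow[d, "0"]       \\
\Nl{X/Z} \arrow[r]        &\Nl{Y/Z}|_X,
\end{tikzcd}\]
and observe that the upper-right corner is literally the vertex. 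No reduction to strict maps, no cotangent complexes, no $\EExt$ is needed; the factorization through zero is built into the diagram. Your functor-of-points remark (``a deformation of $X$ inside $Y$ \dots restricts trivially on $Y$'') is exactly this observation, so you already have the short proof in hand without the surrounding machinery of $\rectglob$ and transitivity triangles.

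For the second claim, the paper again stays at the level of the functor of points: once $g$ is log smooth, one checks directly that $\Nl{X/Y} \to \Nl{X/Z}$ is a smooth epimorphism with fiber $\Tl{Y/Z}|_X$ by writing down what a lift of a square-zero extension along a log smooth map looks like. Your derived-category argument (rotate the triangle, dualize) is correct and perhaps more conceptual, but it requires identifying the Olsson morphisms with the maps coming from the transitivity triangle of log cotangent complexes, which is precisely the coherence you flag as the ``main obstacle.'' The paper sidesteps that obstacle entirely by never leaving the deformation-theoretic description.
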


\begin{proof}

The Olsson Morphisms come about from the commutative diagram
\[\begin{tikzcd}
X \arrow[r, equals] \arrow[d]       &X \arrow[r] \arrow[d]      &Y \arrow[d]      \\
Y \arrow[r]       &Z \arrow[r, equals]      &Z
\end{tikzcd} \rightsquigarrow\begin{tikzcd}
\Nl{X/Y} \arrow[r] \arrow[d]        &\Nl{Y/Y}|_X = X \arrow[d, "0"]       \\
\Nl{X/Z} \arrow[r]        &\Nl{Y/Z}|_X.
\end{tikzcd}\]

The surjectivity, smoothness, and calculation of the fiber of $\Nl{X/Y} \rightarrow \Nl{X/Z}$ may all be checked routinely using the functor of points.

\end{proof}

\begin{remark}\label{findiagramcommcheckonabhulls}
Suppose given a (not necessarily commutative) finite diagram of cones. If the diagram induced by taking abelian hulls is commutative, so was the original. 
\end{remark}

\section{Properties of the Log Normal Cone}

We are ready to define the log normal cone. We recall the essential properties of the ordinary normal cone; the rest of the section establishes analogous properties in the log context. 

\begin{remark}
Consider a DM-type morphism $f: X \rightarrow Y$ of algebraic stacks. K. Behrend and B. Fantechi defined the Intrinsic Normal Cone \cite{intrinsic}
\[\C{f} = \C{X/Y} \subseteq \N{X/Y};\]
C. Manolache \cite{virtpb} removed their assumptions of smooth $Y$ and DM $X$. This cone has the following basic properties: 
\begin{enumerate}
    \item A commutative diagram 
    \[\begin{tikzcd}
    X' \arrow{r} \arrow[d]      &X \arrow[d, "f"]      \\
    Y' \arrow[r, "q"]      &Y
    \end{tikzcd}\]
    yields a morphism of cones $\varphi : \C{X'/Y'} \rightarrow \C{X/Y}\times_X X'$. 
    \begin{itemize}
        \item if the square was cartesian, $\varphi$ is a closed embedding. 
        \item if also $f$ or $q$ was flat, $\varphi$ is an isomorphism. 
    \end{itemize}
    
    \item\label{lciseslogcones} For a composite
    \[X \overset{f}{\rightarrow} Y \overset{g}{\rightarrow} Z\]
    \begin{itemize}
        \item if $g$ is l.c.i., $\C{X/Y} = \N{X/Y}$ and the sequence 
        \[\N{X/Y} \rightarrow \C{X/Z} \rightarrow \C{Y/Z}|_X\]
        of cone stacks is exact.
        \item if $h$ is smooth, the sequence 
        \[T_{Y/Z}|_X \rightarrow \C{X/Y} \rightarrow \C{X/Z}\]
        is exact.
    \end{itemize}
    
    \item Obstruction Theories and Gysin Pullbacks are obtained by placing the cone in a vector bundle stack $\C{X/Y} \subseteq E$ (see \cite{virtpb}, \cite{obstthies}, \cite{kreschthesis}). 
\end{enumerate}
\end{remark}

\begin{definition}[Log Intrinsic Normal Cone, Olsson Morphisms]
Let $f : X \rightarrow Y$ be a DM-type morphism of log algebraic stacks. We define the \textit{Log (Intrinsic) Normal Cone }
\[\Cl{X/Y} := \C{X/\Log Y} \subseteq \Nl{X/Y}\]

{\noindent}after \cite{loggw}. Endow it with the log structure pulled back from $X$. Given a commutative square of log algebraic stacks and its partner
\[\begin{tikzcd}
X' \arrow[r] \arrow[d]      &X \arrow[d]      \\
Y' \arrow[r, "q"]      &Y
\end{tikzcd} \quad \rightsquigarrow \quad
\begin{tikzcd}
X' \arrow[r] \arrow[d]      &X \arrow[d]      \\
\Log^\square_q \arrow[r]     &\Log Y,
\end{tikzcd}
\]

the latter induces 
\[\varphi : \Cl{X'/Y'} \simeq \C{X'/\Log^\square_q} \rightarrow \Cl{X/Y}.\]
This is again called the \textit{Olsson Morphism}.

\end{definition}

\begin{remark}\label{logconesameasusualifstrict}

The map $\Log Y \rightarrow Y$ has a section $Y \subseteq \Log Y$ which is an open immersion. This open immersion represents strict log maps to $Y$. 

As a result, if $X \rightarrow Y$ is DM and strict, $\Cl{X/Y} = \C{X/Y}$ and $\Nl{X/Y} = \N{X/Y}$. In addition, the Olsson Morphisms are the same as the ordinary functoriality of the normal cone (Remarks \ref{findiagramcommcheckonabhulls} and \ref{strictolssonmorsareordinary}). 

The Olsson Morphism of any fs pullback square is a closed immersion, because it fits into a commutative square of closed immersions from Remark \ref{remarkfspullbacksinduceclimmsoflognormalsheaves}:
\[\begin{tikzcd}
\Cl{X'/Y'} \ar[r] \ar[d, hook]       &\Cl{X/Y}|_{X'}  \ar[d, hook]       \\
\Nl{X'/Y'} \ar[r, hook]       &\Nl{X/Y}|_{X'}.         
\end{tikzcd}\]

\end{remark}

\begin{remark}[Short Exact Sequences of Cone Stacks]\label{rmk:defnsesconestacks}
Recall \cite[Definition 1.12]{intrinsic}. Let $E$ be a vector bundle stack and $C, D$ cone stacks all on some base algebraic stack $X$. A composable pair of morphisms of cone stacks
\[E \to C \to D\]
is called a \textit{short exact sequence} if 
\begin{itemize}
    \item $C \to D$ is a smooth epimorphism. 
    \item The square
    \[\begin{tikzcd}
    E \times C \ar[r, "pr_2"] \ar[d, "\sigma"]      &C \ar[d]      \\
    C \ar[r]       &D,
    \end{tikzcd}\]
    where $pr_2$ is the projection and $\sigma$ the action, is cartesian.
\end{itemize}
These are equivalent to having $C \simeq E \times_X D$ locally in $X$. 

Note that this definition is \textit{fpqc}-local in the base $X$ \cite[02VL]{sta}. Another reduction we will need applies in case there is a commutative diagram of cone stacks
\[\begin{tikzcd}
E \ar[r] \ar[d]       &C \ar[r] \ar[d, "s"]      &D  \arrow[d, "t"]     \\
E' \ar[r]      &C' \ar[r]     &D'
\end{tikzcd}\]
with $E, E'$ vector bundles. If the top sequence is exact and the arrows labeled $s, t$ are smooth and surjective, then the bottom is exact. To see this, pushout along $E \to E'$ so as to assume $E = E'$ ($s, t$ remain smooth and surjective). The diagram on the left is the pullback along the smooth surjection $D' \to D$ of the one on the right:
\[\begin{tikzcd}
E \times C' \ar[r] \ar[d] \pb         &C' \ar[d]         \\
C' \ar[r]      &D'
\end{tikzcd}
\begin{tikzcd}
E \times C \ar[r] \ar[d]      &C \ar[d]      \\
C \ar[r]       &D,
\end{tikzcd}\]
and we can verify that $E \times C$ is the pullback after smooth-localizing. 
\end{remark}

\begin{proposition}\label{targetsmdescentnormalcone}
    Suppose $X \overset{f}{ \rightarrow} Y \overset{g}{\rightarrow} Z$ are DM maps between log algebraic stacks, and $g$ is log smooth. Then 
    \[\Tl{Y/Z}|_X \rightarrow \Cl{X/Y} \rightarrow \Cl{X/Z}\]
    is an exact sequence of cone stacks. 
\end{proposition}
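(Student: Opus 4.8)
The plan is to reduce the statement to the ordinary exact sequence of normal cones by passing to Olsson's stacks. By definition $\Cl{X/Y} = \C{X/\Log Y}$ and $\Cl{X/Z} = \C{X/\Log Z}$, where $X \to \Log Y$ and $X \to \Log Z$ are the \emph{strict} maps classifying $f$ and $g \circ f$. These organize into a composite of ordinary algebraic stacks
\[X \longrightarrow \Log Y \overset{\Log g}{\longrightarrow} \Log Z,\]
and on the functor of points the composite $X \to \Log Z$ is exactly the strict map classifying $g \circ f$, so the two descriptions of $\Cl{X/Z}$ agree. Since $f$ is DM-type, the lemma showing $X \to \Log X$ and $\Log X \to \Log Y$ are DM-type guarantees that $X \to \Log Y$ is DM-type, so all the ordinary normal cones in sight are defined.

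First I would establish that $\Log g : \Log Y \to \Log Z$ is smooth. This is Olsson's theorem \cite[Theorem 4.6]{logstacks}, used in the same direction as in Remark \ref{rmk:logetfacemapslogstacks}: $g$ log smooth forces $\Log g$ smooth. With this in hand, the recalled ordinary exact sequence of normal cones for a composite whose second map is smooth yields an exact sequence of cone stacks
\[T_{\Log Y/\Log Z}|_X \longrightarrow \C{X/\Log Y} \longrightarrow \C{X/\Log Z},\]
which is to say $T_{\Log Y/\Log Z}|_X \to \Cl{X/Y} \to \Cl{X/Z}$. The exactness is thus inherited directly from the ordinary case; no new intersection theory is needed here.

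Next I would identify the vector bundle term. Because $\Log g$ is the $\Log$-ification of $g$, its ordinary relative cotangent sheaf is the pullback along $\Log Y \to Y$ of the log cotangent bundle $\lkah{Y/Z}$ (this is the comparison of ordinary and log differentials on $\Log$, compatible with $\Log Y \to Y$ being log étale as in Remark \ref{rmk:logetfacemapslogstacks}). Dualizing and restricting along the strict map $X \to \Log Y$ gives $T_{\Log Y/\Log Z}|_X \simeq \Tl{Y/Z}|_X$, turning the displayed sequence into the one asserted.

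The remaining and most delicate point is to check that the two maps produced above are literally the Olsson morphisms named in the statement, not merely abstractly isomorphic to the correct sequence. For $\Cl{X/Y} \to \Cl{X/Z}$ this means comparing the ordinary functoriality of $\C{}$ along $X \to \Log Y \to \Log Z$ with the Olsson morphism defined through $\Log^\square_g$; I would carry this out using that $\Log^\square_g \to \Log Y$ is étale (Remark \ref{squaresandglobareetaleoverthesource}), so that $\C{X/\Log^\square_g} \simeq \C{X/\Log Y}$, together with the compatibility recorded in Remark \ref{rmk:compareolssonmorandlogsoneverything}. The identification of $\Tl{Y/Z}|_X \to \Cl{X/Y}$ with the rotated Olsson morphism follows exactly as in the normal-sheaf statement Proposition \ref{exactseqfornormalsheaves}, since $\Nl{Y/Z} = B\Tl{Y/Z}$ for $g$ log smooth and the inclusions $\Cl{} \subseteq \Nl{}$ are compatible with Olsson morphisms (Remark \ref{logconesameasusualifstrict}). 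I expect this map-matching bookkeeping, rather than the exactness, to be where the real work lies.
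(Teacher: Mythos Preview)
Your proposal is correct and follows essentially the same route as the paper: reduce to the ordinary exact sequence for a smooth second map applied to $X \to \Log Y \to \Log Z$, then identify the tangent term with $\Tl{Y/Z}|_X$. The paper inserts an intermediate $\Log^2 Z$ (writing $\Log Y \to \Log^2 Z \overset{d_1}{\to} \Log Z$ with the first map pulled back from the smooth $Y \to \Log Z$ and the second \'etale), which is exactly the unpacking needed to justify your appeal to ``$\Log g$ is smooth'': Olsson's Theorem~4.6 literally gives smoothness of $Y \to \Log Z$, and the factorization through $\Log^2 Z$ is what promotes this to smoothness of $\Log g$ and simultaneously identifies its tangent bundle.
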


\begin{proof}

Encode the log structures on the maps via the top row of the diagram
\[\begin{tikzcd}
X \arrow[r] \arrow[dr]       &\Log Y \arrow[d] \arrow[r] \arrow[dr, phantom, very near start, "\ulcorner"]         &\Log^2 Z \arrow[r] \arrow[d]   \arrow[dr, phantom, very near start, "\ulcorner"]     &\Log^2  \arrow[d]    \\
        &Y \arrow[r]      &\Log Z \arrow[r]     &\Log^1.
\end{tikzcd}\]

Since $Y \rightarrow \Log Z$ is smooth, $\Log Y \rightarrow \Log^2 Z$ is. Moreover, they have the same tangent bundle: 
\[\Tl{Y/Z}|_{\Log Y} = T_{Y/\Log Z}|_{\Log Y} = T_{\Log Y/\Log^2 Z}\]
since the vertical maps are log \'etale \cite[Corollary IV.3.2.4]{ogusloggeom}. 

Together with the isomorphism $\Cl{X/Z} \simeq \C{X/\Log^2 Z}$, we obtain the exact sequence. 

\end{proof}

\begin{remark}

In the proof, the composite 
\[\Cl{X/Y} \rightarrow \C{X/\Log^2 Z} \simeq \Cl{X/Z}\]
is precisely the Olsson Morphism. This is immediate from the diagram: 
\[\begin{tikzcd}
X \arrow[r, equals] \arrow[d]       &X \arrow[d]      \\
\Log^\square_q  \arrow[d]\arrow[r]      &\Log^2 Z \arrow[d]       \\
\Log Y \arrow{ur}      &\Log Z.
\end{tikzcd}\]

\end{remark}

\begin{remark}\label{axiomaticcharacterizationoflognormalcones}

The introduction promised three characterizations of $\Cl{X/Y}$. 

The log intrinsic normal cone is characterized by the strict case of Remark \ref{logconesameasusualifstrict} and the log \'etale case of Proposition \ref{targetsmdescentnormalcone}. This is because any map $X \to Y$ factors into the strict map $X \to \Log Y$ composed with the log \'etale map $\Log Y \to Y$ (Remark \ref{rmk:logetfacemapslogstacks}).

We can unpack this definition locally using charts. Suppose a morphism has a global fs chart by Artin Cones: 
\[\begin{tikzcd}
X \arrow{r} \arrow{d}       &Y \arrow{d}      \\
A_P \arrow{r}     &A_Q. 
\end{tikzcd}\]
The morphism $A_P \rightarrow A_Q$ is log \'etale \cite[Corollary 5.23]{logstacks}. Let $W = A_P \times^\ell_{A_Q} Y$ denote the fs pullback, so that $X \rightarrow Y$ factors through a strict map to $W$ and $W$ is log \'etale over $Y$. We immediately get
\[\Cl{X/Y} = \C{X/W}.\]
The reader may be reassured by working locally with this definition. If the reader wants instead to work with charts $\Spec (P \rightarrow \CC[P])$ in the traditional sense, then log \'etaleness is no longer immediate and we must check Kato's Criteria \cite[Corollary IV.3.1.10]{ogusloggeom}.

Recall Construction \ref{katofactorization} -- after localizing in the \'etale topology, we obtain a factorization of any map $X \rightarrow Y$ as a \textit{strict} closed immersion followed by a log smooth map 
\[X \subseteq X_\theta \rightarrow Y.\]
Proposition \ref{targetsmdescentnormalcone} therefore locally provides a presentation of the log normal cone: 
\[\Cl{X/Y} = [\C{X/X_\theta}/\Tl{X_\theta/Y}].\]
\end{remark}

\begin{lemma}\label{ftfactor}
Given a DM map $f : X \rightarrow Y$ of log algebraic stacks with $X$ quasicompact, the map 
\[X \rightarrow \Log Y\]
factors through an open quasicompact subset $U \subseteq \Log Y$. 
\end{lemma}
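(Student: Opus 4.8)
The plan is to reduce the statement to a quasicompactness property of the stack $\Log Y$ itself, using the fact that $X$ is quasicompact and that the image of a quasicompact stack is well-behaved. First I would recall that $\Log Y$, Olsson's stack of log structures over $Y$, is locally of finite type over $Y$ and admits a stratification by Artin Fans in the sense of the Conventions: étale-locally, the map $X \to \Log Y$ is modeled by a map to an Artin Cone $A_Q$, and $\Log Y$ is built from such pieces. The key point is that although $\Log Y$ is typically \emph{not} quasicompact (it has infinitely many strata indexed by the faces of the monoids appearing, and by arbitrarily fine log modifications), only finitely many of these strata can meet the image of the quasicompact source $X$.

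The main steps, in order, are as follows. First, since $X$ is quasicompact, choose a finite étale cover $\{X_i \to X\}$ such that each composite $X_i \to \Log Y$ lifts to a chart, i.e.\ factors through a strict map to an fs pullback $W_i = A_{Q_i} \times^\ell_{A_{R_i}} Y$ as in Remark \ref{axiomaticcharacterizationoflognormalcones}; this is possible étale-locally and finiteness comes from quasicompactness of $X$. Each $W_i \to \Log Y$ is étale (it is $\Log$ applied to a log étale map, hence étale by Remark \ref{rmk:logetfacemapslogstacks}), and after shrinking we may take $W_i$ quasicompact. Second, I would take $U$ to be the union of the images of the finitely many quasicompact opens $\operatorname{im}(X_i \to \Log Y)$, or more safely the union of finitely many quasicompact open substacks of $\Log Y$ through which the $X_i$ factor. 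The image of a quasicompact stack under a morphism that is, at least étale-locally on the target, of finite type is contained in a quasicompact open: concretely, the composite $X_i \to W_i \to \Log Y$ has quasicompact source and $W_i$ quasicompact, so its image lands in a finite union of strata, and one takes the open star of these strata.

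The step I expect to be the genuine obstacle is showing that a \emph{quasicompact open} substack $U$ can be chosen, rather than merely a quasicompact constructible or locally closed one. The natural candidate, the image of $X \to \Log Y$, need not be open. The remedy is to invoke the stratification of $\Log Y$ by locally closed loci on which the characteristic monoid is constant (as in the proof of Lemma \ref{fsisopenimmersioninfinelstrs}, via \cite[Theorem II.2.5.4]{ogusloggeom}): the image meets only finitely many such strata, and one enlarges the image to the open substack consisting of all points whose characteristic monoid \emph{generizes} into one of these finitely many strata. Because cospecialization maps are localizations at faces and there are finitely many faces of a given fs monoid, this ``saturation under generization'' of a finite set of strata is both quasicompact and open (open by the generization criterion \cite[Tag 0542]{sta}, exactly as at the end of the proof of Lemma \ref{fsisopenimmersioninfinelstrs}). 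This $U$ then contains the image of $X$ and is the desired quasicompact open through which $X \to \Log Y$ factors.
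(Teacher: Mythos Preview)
Your approach is essentially the paper's: reduce via charts to a factorization $X_i \to W_i \to \Log Y$ with $W_i = A_{P_i} \times^\ell_{A_{Q_i}} Y$ quasicompact and $W_i \to \Log Y$ \'etale, then take the (finite) union of images. The paper does exactly this in two sentences, citing \cite[Corollary 5.25]{logstacks} for \'etaleness of $W_i \to \Log Y$.

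However, your third paragraph is an unnecessary detour that suggests you have not fully cashed in what you already proved. You correctly note in the first paragraph that $W_i \to \Log Y$ is \'etale, and you arrange $W_i$ to be quasicompact. An \'etale morphism of algebraic stacks is open, so the image of $W_i$ in $\Log Y$ is already an \emph{open} quasicompact substack through which $X_i$ factors. There is no need to pass to the image of $X_i$ (which indeed need not be open), nor to invoke the stratification by characteristic monoids and saturate under generization. Simply set $U := \bigcup_i \operatorname{im}(W_i \to \Log Y)$; this is a finite union of quasicompact opens, hence quasicompact open, and $X$ factors through it. The stratification argument you sketch would work, but it is strictly more laborious than what the situation demands.
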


Our applications require openness; otherwise the lemma is trivial. 

\begin{proof}
The claim is \'etale-local in $Y$ and $X$ because $X$ is quasicompact. We can thereby assume we have a global chart
\[\begin{tikzcd}
X \ar[r] \ar[d]       &Y \ar[d]      \\
A_P \ar[r]         &A_Q.
\end{tikzcd}\]

The map $A_P \times_{A_Q} Y \to \Log Y$ is \'etale \cite[Corollary 5.25]{logstacks} and $X$ factors through its open, quasicompact image. 

\end{proof}

\begin{remark}
This lemma ensures that any DM map $X \to Y$ of log stacks with $X$ quasicompact factors through $X \to U \to Y$ with $X \to U$ strict, $U$ quasicompact, and $U \to Y$ log \'etale. 
\end{remark}

\begin{example}\label{exaffinediagonal}
We provide an example of Construction \ref{katofactorization} and Remark \ref{katofactorizationwithsurjectivemonoids}. 

Consider the diagonal morphism $\mathbb{A}^1 \overset{\Delta}{\rightarrow} \mathbb{A}^2$. The addition map $\NN^2 \overset{+}{\rightarrow} \NN$ gives a chart for $\Delta$. 

Denote by $B$ the log blowup of $\mathbb{A}^2$ at the ideal $I \subseteq M_{\mathbb{A}^2}$ generated by $\NN^2 \setminus \{0\} \subseteq \NN^2$. The pullback $\Delta^*I$ is generated by the image of the composite
\[\NN^2 \setminus \{0\} \subseteq \NN^2 \overset{+}{\rightarrow} \NN.\]
The pullback is generated globally by a single element and so $\Delta$ factors through the log blowup $B$. 

Name the generators $\NN^2 = \NN e \oplus \NN f$. The log blowup $B$ is covered by two affine opens $D_+(e)$ and $D_+(f)$, on which $e$ and $f$ are invertible.

On the chart $D_+(e)$, the morphism $\mathbb{A}^1 \rightarrow B$ looks like
\[\begin{tikzcd}
\NN \arrow{d}     &\NN e \oplus \NN (f-e) \arrow{d} \arrow{l}     \\
\CC[t]        &\CC[x, \dfrac{y}{x}] \arrow{l}.
\end{tikzcd}\]
The horizontal morphisms send $f-e \mapsto 0$ and $\dfrac{y}{x} \mapsto 1$. Because $(f-e)$ maps to $1 \in \CC[t]$, the composite 
\[\NN e \oplus \NN (f-e) \rightarrow \NN \rightarrow \CC[t]\]
is another chart for the same log structure on $\mathbb{A}^1$. This means that $\mathbb{A}^1 \rightarrow D_+(e)$ is strict. The same discussion applies to $D_+(f)$. In the tropical picture \cite[\S 2]{tropicalcurvesmodulicones}, we subdivided $\mathbb{A}^2$ at the image of the ray corresponding to $\mathbb{A}^1$:
\[\begin{tikzpicture}
\draw[->] (-3, 0) -- (-1.5, 1.5);
\node at (-3, 1.2){$\mathbb{A}^1$};
\fill (-3, 0) circle (.05);
\draw[->, line width=1pt] (1, 0) -- (1, 2);
\draw[->, line width=1 pt] (1, 0) -- (3, 0);
\node at (3, .8){$\mathbb{A}^2$};
\fill (1, 0) circle (.05);
\draw[|->] (-.7, .8) -- (.4, .8);
\draw[->, dashed] (1, 0) -- (2.5, 1.5);
\end{tikzpicture}.\]

\end{example}

\begin{proposition}\label{lciseschangesource}
Consider DM-type morphisms $X \overset{f}{\rightarrow} Y \overset{g}{\rightarrow} Z$ between log algebraic stacks. If $\Cl{X/Y} = \Nl{X/Y}$, then
\[\Nl{X/Y} \rightarrow \Cl{X/Z} \rightarrow \C{\Log Y/\Log Z}|_X\]
is an exact sequence of cone stacks. 
\end{proposition}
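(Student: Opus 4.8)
The plan is to reduce the entire statement to the ordinary normal-cone exact sequence of property \ref{lciseslogcones} by passing to Olsson's stacks. Unwinding the definitions, all three terms are already ordinary normal cones and sheaves: $\Nl{X/Y} = \N{X/\Log Y}$ (Definition \ref{defn:lognormalsheaf} and \cite{onnormalsheaves}), $\Cl{X/Z} = \C{X/\Log Z}$ by definition, and $\C{\Log Y/\Log Z}|_X$ is an ordinary normal cone outright. I would assemble them into the composite of \emph{ordinary} DM-type maps
\[X \longrightarrow \Log Y \overset{\Log g}{\longrightarrow} \Log Z,\]
in which $X \to \Log Y$ is the strict map classifying $f$. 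On the functor of points $\Log g$ carries $(M_Y|_T \to M_T)$ to $(M_Z|_T \to M_Y|_T \to M_T)$, so the composite underlies the strict map $X \to \Log Z$ classifying $g \circ f$; in particular its ordinary normal cone is $\C{X/\Log Z} = \Cl{X/Z}$. Both $\Log Y \to \Log Z$ and the composite are DM-type because $\Log$ sends DM-type maps to DM-type maps, so all the ordinary cones are defined.

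With this dictionary in hand, the sequence to be proved is literally the ordinary composite sequence
\[\N{X/\Log Y} \longrightarrow \C{X/\Log Z} \longrightarrow \C{\Log Y/\Log Z}|_X\]
for $X \to \Log Y \to \Log Z$. The second arrow is visibly the ordinary functoriality map $\C{A/C} \to \C{B/C}|_A$, and the first, a priori an Olsson morphism, coincides with the ordinary functoriality map $\N{X/\Log Y} \to \C{X/\Log Z}$ because $X \to \Log Y$ is strict; this is exactly the content of Remarks \ref{logconesameasusualifstrict} and \ref{strictolssonmorsareordinary}. The hypothesis translates equally cleanly: since $\Cl{X/Y} = \C{X/\Log Y}$ and $\Nl{X/Y} = \N{X/\Log Y}$, the assumption $\Cl{X/Y} = \Nl{X/Y}$ says precisely that $\C{X/\Log Y} = \N{X/\Log Y}$. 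As this common value is the vector bundle stack required on the left of an exact sequence of cone stacks (Remark \ref{rmk:defnsesconestacks}), the strict map $X \to \Log Y$ is l.c.i. in the ordinary sense, and property \ref{lciseslogcones} applied to the composite yields the exactness claimed.

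I expect the only real obstacle to be bookkeeping: making sure that the morphisms of the log sequence, which are defined through Olsson morphisms and the stacks $\Log^\square_q$, genuinely match the ordinary functoriality maps of the reduced composite $X \to \Log Y \to \Log Z$, rather than some twist of them. This is handled by the strict-case identifications of Remarks \ref{logconesameasusualifstrict} and \ref{strictolssonmorsareordinary} together with the \'etale comparisons of Remark \ref{squaresandglobareetaleoverthesource}; once those are invoked, nothing beyond citing the ordinary result \ref{lciseslogcones} remains.
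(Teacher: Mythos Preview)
Your reduction is correct in spirit and genuinely different from the paper's argument. You observe that the sequence is literally the ordinary cone sequence for the composite $X \to \Log Y \to \Log Z$ and then cite property~\ref{lciseslogcones} in one stroke. The paper instead works much harder: it first checks the sequence composes to zero via Proposition~\ref{exactseqfornormalsheaves} and Remark~\ref{rmk:compareolssonmorandlogsoneverything}, then \'etale-localizes to affine log schemes with global charts, and uses Construction~\ref{katofactorization} twice to peel off log-smooth pieces until both $f$ and $g$ are strict closed immersions, at which point it invokes ``a relative form of \cite[Proposition~3.14]{intrinsic}''. Your route buys brevity and makes the conceptual content transparent; the paper's route makes the appeal to the ordinary result land in a very concrete setting (strict closed immersions of affines), which may be why the author chose it, since the exact generality in which \cite[Proposition~3.14]{intrinsic} and its Manolache extension are stated in the literature is a little delicate.

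One step in your argument is phrased circularly and should be tightened. You write that $\Nl{X/Y}$ ``is the vector bundle stack required on the left of an exact sequence of cone stacks,'' and from this conclude that $X \to \Log Y$ is l.c.i. But you cannot assume the conclusion's well-formedness to deduce a hypothesis. The correct argument is direct: factor $X \to \Log Y$ smooth-locally as a closed immersion $X \hookrightarrow M$ followed by a smooth $M \to \Log Y$; then $\C{X/\Log Y} = \N{X/\Log Y}$ forces $\C{X/M} = \N{X/M}$, i.e.\ the immersion is regular, so $X \to \Log Y$ is l.c.i.\ in the ordinary sense and $\N{X/\Log Y}$ is a vector bundle stack. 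With that fix, property~\ref{lciseslogcones} applies and your proof is complete.
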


\begin{proof}

Compare \cite[Proposition 3.14]{intrinsic}.

By Proposition \ref{exactseqfornormalsheaves} and Remark \ref{rmk:compareolssonmorandlogsoneverything}, this sequence composes to zero. Remark \ref{rmk:defnsesconestacks} allows us to repeatedly \textit{fpqc}-localize in $X$ to check exactness of such a sequence. Localizing along strict smooth covers of $Z$ and strict \'etale covers of $X$ and $Y$ ensures that the normal cones and sheaf pull back. Reduce to the case where $X$, $Y$, and $Z$ are affine log schemes and the map $Y \to Z$ admits a global fs chart. We are therefore in the situation of Construction \ref{katofactorization}.

\textbf{Reduction to $g : Y \to Z$ Strict}

Factor $Y \to Z$ into a strict closed immersion composed with a log smooth map:
\[Y \subseteq W \twoheadrightarrow Z.\]

We obtain a diagram
\[\begin{tikzcd}
        &\Tl{W/Z}|_X \ar[d] \ar[r, equals]       &T_{\Log W/\Log Z}|_X     \ar[d]  \\
\N{X/Y} \ar[r] \ar[d, equals]        &\Clstrict{X/W} \ar[r] \ar[d] \pb       &\C{\Log Y/\Log W}|_X \ar[d]        \\
\N{X/Y} \ar[r]        &\Cl{X/Z} \ar[r]       &\C{\Log Y/\Log Z}|_X.        \\
\end{tikzcd}\]
Observe that the diagram commutes -- the morphism $\Tl{W/Z}|_X \to \Clstrict{X/W}$ in the proof of Proposition \ref{targetsmdescentnormalcone} factors through an identification $\Tl{W/Z}|_{\Log W} \simeq \Tl{\Log W/\Log^2 Z}$. Because $\Log W \to W$ is log \'etale, the two tangent spaces are isomorphic \cite[IV.3.2.4]{ogusloggeom}. Thus the right square is a pullback. The vertical maps of cones are smooth surjections, so it suffices to show the middle row is exact as in Remark \ref{rmk:defnsesconestacks}. We may thereby assume $W = Z$ and $g : Y \to Z$ is a strict closed immersion.

\textbf{Reduction to $f: X \to Y$ Strict}

Use Construction \ref{katofactorization} again to factor $X \to Z$ as a strict closed immersion composed with a log smooth map $X \subseteq W \twoheadrightarrow Z$. The map $X \to W' := W \times_Z Y$ is again a strict closed immersion: 
\begin{equation}\label{eqn:lcisesstrfactndiagram}
\begin{tikzcd}
X \ar[r, hook] \ar[dr, bend right]       &W' \ar[r, hook] \ar[d] \lpb         &W \ar[d]     \\
        &Y \ar[r, hook]      &Z.
\end{tikzcd}
\end{equation}

Because the top row is strict, $X \to \Log W'$ factors through the open subset $W' \subseteq \Log W'$ and
\[\C{\Log W'/\Log W}|_X = \C{\Log W'/\Log W}|_{W'}|_X = \Cl{W'/W}|_X = \C{W'/W}|_X.\]

The fs pullback square in \eqref{eqn:lcisesstrfactndiagram} also induces a cartesian square of stacks: 
\[\begin{tikzcd}
\Log W' \ar[r] \ar[d] \pb         &\Log W \ar[d]         \\
\Log Y \ar[r]      &\Log Z
\end{tikzcd}\]
with $\Log W \to \Log Z$ smooth. This reveals that 
\[\C{\Log Y/\Log Z}|_{\Log W'} = \C{\Log W'/\Log W}.\] 
Putting this together with the above, we have computed
\[\C{\Log Y/\Log Z}|_X = \C{W'/W}|_X.\]

The factorization \eqref{eqn:lcisesstrfactndiagram} gives a diagram
\[\begin{tikzcd}
\Tl{W'/Y}|_X \ar[r, equals] \ar[d]        &\Tl{W/Z}|_X \ar[d]        \\
N_{X/W'} \ar[r] \ar[d]        &\Clstrict{X/W} \ar[r] \ar[d]       &\C{W'/W}|_X \ar[d, equals]       \\
N_{X/Y} \ar[r]         &\Cl{X/Z} \ar[r]       &\C{\Log Y/\Log Z}|_X
\end{tikzcd}\]

The composable vertical arrows are the quotients of Proposition \ref{targetsmdescentnormalcone}, so the bottom row will be exact if we show the middle row is. The middle row is exact by a relative form of the original \cite[Proposition 3.14]{intrinsic}. 

\end{proof}

\begin{remark}\label{rmk:natltyofsessandotherdisttriangle}

The exact sequences of cone stacks in Propositions \ref{targetsmdescentnormalcone}, \ref{lciseschangesource} are natural in morphisms of composable pairs of arrows. 

There is a version of Proposition \ref{lciseschangesource} for log cotangent complexes that we will use once later on. From any composable pair $X \to Y \to Z$, we get $X \to \Log Y \to \Log Z$ and $X \to \Log Y \to \Log^2 Z$. Both result in the same distinguished triangle: 
\[\ccx{\Log Y/\Log Z}|_{X} \to \lccx{X/Z} \to \lccx{X/Y} \to.\]
of \cite[8.10]{sheavesonartinstacks}. 

\end{remark}

In the next example, the log normal cone differs from the ordinary scheme-theoretic one.

\begin{example}\label{extwoblowupsofpoint}
In Example \ref{exaffinediagonal}, we considered the log blowup $B$ of $\mathbb{A}^2$ at the origin and the diagonal map. Pull back to get the identity log blowup of $\mathbb{A}^1$:
\[\begin{tikzcd}
\mathbb{A}^1 \arrow{r} \arrow[d, equals] \lpb        &B \arrow{d}       \\
\mathbb{A}^1 \arrow{r}        &\mathbb{A}^2.
\end{tikzcd}\]

Let $\point_\NN, \point_{\NN^2}$ both be $\Spec \CC$, with log structures coming from $\NN$ and $\NN^2$, respectively. Then the inclusions of the origins $\point_\NN \in \mathbb{A}^1$ and $\point_{\NN^2} \in \mathbb{A}^2$ are strict. 

Take the pullback of the above diagram along the inclusion $\point_{\NN^2} \in \mathbb{A}^2$: 
\[\begin{tikzcd}
\point_\NN \arrow[d, equals] \arrow[r] \lpb       &D \arrow[d]       \\
\point_\NN \arrow[r]       &\point_{\NN^2}.
\end{tikzcd}\]

{\noindent}The map $D \rightarrow \point_{\NN^2}$ is the exceptional divisor of $B$, which is $\mathbb{P}^1$ with log structure $\overline{M}_x = \NN^2$ at the intersections with the axes and $\overline{M}_x = \NN$ elsewhere. 

To see the log normal cone differ from the ordinary one, compute the normal cones of the arrows in this square: $\Cl{\point_{\NN}/\point_\NN} = \point$, $\Cl{\point_\NN/\point_{\NN^2}} = \Cl{\point_\NN/D} = \mathbb{A}^1$, and $\Cl{D/\point_{\NN^2}} = \mathbb{P}^1$. Although $\point_\NN$ and $\point_{\NN^2}$ have the same underlying scheme, the log normal cones of $\point_{\NN}$ over them are different. 

\end{example}

\begin{remark}\label{rmk:stretalecaseoflciseschangesource}
A handy consequence of Proposition \ref{lciseschangesource} is that, if $Y \to Z$ is a DM-type morphism between log algebraic stacks and $Y' \to Y$ is a \textit{strict} \'etale map, then 
\[\Cl{Y'/Z} \simeq \Cl{Y/Z}|_{Y'}.\]
This is \textit{not} true without the strictness assumption. This is the observation of W. Bauer precluding the existence of a log cotangent complex with all its desiderata (see \cite[\S 7]{logcotangent}). 

In general, it need only be a closed immersion. This is because 
\[\Cl{Y'/Z} \simeq \C{\Log Y/\Log Z}|_{Y'} \subseteq N_{\Log Y/\Log Z}|_{Y'} \subseteq \Nl{Y/Z}|_{Y'}\] 
is a closed immersion which factors through $\Cl{Y/Z}|_{Y'}$, as in Remark \ref{rmk:compareolssonmorandlogsoneverything}.

For a single example, take the log blowup $B \to \mathbb{A}^2$ of the origin $\point \in \mathbb{A}^2$. The pullback defines a strict pullback square: 
\[\begin{tikzcd}
D \ar[r] \ar[d] \lpbstrict       &B \ar[d]       \\
\point \ar[r]       &\mathbb{A}^2.
\end{tikzcd}\]

Because the horizontal morphisms are strict, their log normal cones coincide with the ordinary ones. Log blowups are log \'etale, so we would erroneously be led to conclude that 
\[\C{D/B} \overset{?}{=} \C{\point/\mathbb{A}^2}|_D.\]
The inclusion $D \subseteq B$ is regular, and so is $\point \in \mathbb{A}^2$, so the normal cones and normal sheaves agree: 
\[\N{D/B} = \OO_B(D)|_B\]
\[\N{\point/\mathbb{A}^2}|_D = \mathbb{A}^2_D.\]
The dimensions are different, so they can't be equal.

\end{remark}

{\color{\newstuffcolor}
\begin{lemma}\label{lem:strsmloctargsmloccone}
Suppose given a strict pullback square
\[\begin{tikzcd}
X' \ar[r] \ar[d] \lpbstrict      &X \ar[d]      \\
Y' \ar[r, "q"]      &Y
\end{tikzcd}\]
of DM-type morphisms between log algebraic stacks for which $q$ is strict and smooth. Then the Olsson Morphism
\[\Cl{X'/Y'} \overset{\sim}{\to} \Cl{X/Y}|_{X'}\]
is an isomorphism.
\end{lemma}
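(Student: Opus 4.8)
The plan is to reduce the log statement to the ordinary (non-log) case by passing to Olsson's stacks, exactly in the spirit of the definition $\Cl{X/Y} := \C{X/\Log Y}$. First I would recall that by definition the Olsson Morphism in question is the map $\Cl{X'/Y'} \simeq \C{X'/\Log^\square_q} \to \Cl{X/Y}|_{X'}$ attached to the partner square built from the given pullback square. Since $q$ is strict, Remark \ref{strictolssonmorsareordinary} tells us that $\glob_q \simeq \Log Y'$ and that the Olsson Morphism coincides with the ordinary functoriality of the normal cone; concretely the relevant comparison square relating log and ordinary normal sheaves is cartesian. So the whole problem collapses to a statement about ordinary intrinsic normal cones associated to the applied-$\Log$ square.

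The key step is therefore to apply $\Log$ and analyze the resulting diagram of ordinary stacks. I would write $\Cl{X'/Y'} = \C{X'/\Log Y'}$ and $\Cl{X/Y}|_{X'} = \C{X/\Log Y}|_{X'}$, and consider the square
\[\begin{tikzcd}
\Log X' \ar[r] \ar[d] & \Log X \ar[d] \\
\Log Y' \ar[r] & \Log Y.
\end{tikzcd}\]
Because $q : Y' \to Y$ is strict and smooth, I expect the induced map $\Log Y' \to \Log Y$ to be smooth (strictness means the square of Olsson stacks is an honest pullback rather than merely an fs pullback, and smoothness is preserved), and likewise $\Log X' \to \Log X$ to be smooth with $\Log X' = \Log X \times_{\Log Y} \Log Y'$ a genuine fiber product. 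Then the desired isomorphism is precisely the ``flat (here smooth) base change'' bullet of the basic properties of the ordinary normal cone recalled just before the Log Intrinsic Normal Cone definition: for a cartesian square with $q$ (or $f$) flat, the comparison morphism $\varphi : \C{X'/Y'} \to \C{X/Y}\times_X X'$ is an isomorphism. Applying this to the applied-$\Log$ square and restricting back to $X' \subseteq \Log X'$ yields $\C{X'/\Log Y'} \simeq \C{X/\Log Y}|_{X'}$, which is exactly the claim.

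The main obstacle I anticipate is verifying cleanly that the square of Olsson stacks obtained by applying $\Log$ is genuinely cartesian (not just fs-cartesian) and that $\Log Y' \to \Log Y$ is smooth, using strictness of $q$. Strictness is what makes $\glob_q \simeq \Log Y'$ in Remark \ref{strictolssonmorsareordinary}, and it is the hypothesis that rescues us from the failure described in Remark \ref{rmk:stretalecaseoflciseschangesource}, where a non-strict (log \'etale) base change breaks the isomorphism. I would make sure the smoothness of $q$ transfers to the map of $\Log$ stacks (one can check this strict-locally on charts, where a strict smooth map of log schemes is just a smooth map in the usual sense, and smoothness is stable under the $\Log$ construction for strict maps), and then invoke the flat base-change property for ordinary normal cones. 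A secondary point to state carefully is that the isomorphism produced agrees with the Olsson Morphism and not merely with some abstract comparison map; this is guaranteed by the identification in Remark \ref{strictolssonmorsareordinary} that in the strict case the Olsson Morphism \emph{is} the ordinary functoriality map, so no separate compatibility check is needed.
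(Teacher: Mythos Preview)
Your approach is correct but differs from the paper's. The paper argues in two steps: first it shows the Olsson map on log normal \emph{sheaves} is an isomorphism (via Remark~\ref{strictolssonmorsareordinary} and the ordinary statement for normal sheaves), so the map on cones is a closed immersion; then it factors $\Cl{X'/Y'} \to \Cl{X'/Y} \to \Cl{X/Y}|_{X'}$ and invokes Propositions~\ref{targetsmdescentnormalcone} and~\ref{lciseschangesource} to see each factor is a smooth epimorphism, whence the closed immersion is an isomorphism. Your route is more direct: since $q$ is strict one has $\Log Y' \simeq Y' \times_Y \Log Y$, so the square with $X',X$ on top and $\Log Y',\Log Y$ on the bottom is genuinely cartesian with smooth base change, and ordinary flat base change for intrinsic normal cones gives $\C{X'/\Log Y'} \simeq \C{X/\Log Y}|_{X'}$ immediately. (You needn't pass through $\Log X', \Log X$ at all; the square $X' \to X$ over $\Log Y' \to \Log Y$ already does the job, and this is exactly the square exhibited in Remark~\ref{strictolssonmorsareordinary}, so compatibility with the Olsson morphism is automatic.) Your argument bypasses the two exact-sequence propositions entirely, which is a real economy here; the paper's approach, on the other hand, uses the same SES machinery that drives the companion Lemma~\ref{lem:strsmlocsrcsmloccone} and the proof of Theorem~\ref{logcostello2}, so it fits a uniform template.
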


\begin{proof}

We first note that the Olsson Morphism $\Nl{X'/Y'} \to \Nl{X/Y}|_{X'}$ on log normal sheaves is an isomorphism. This is clear from the $q$ strict pullback part of Remark \ref{strictolssonmorsareordinary} and the fact that the ordinary normal sheaves are isomorphic.

Now we know that the morphism of cones $\Cl{X'/Y'} \to \Cl{X/Y}|_{X'}$ is a closed immersion, and it suffices to show that it is moreover smooth and surjective. We express this map as a composite
\[\Cl{X'/Y'} \to \Cl{X'/Y} \to \Cl{X/Y}|_{X'}.\]
Proposition \ref{targetsmdescentnormalcone} asserts that the first map is smooth and surjective and Proposition \ref{lciseschangesource} says the same for the second. 

\end{proof}

\begin{lemma}\label{lem:strsmlocsrcsmloccone}
Suppose given a pair of fs pullback squares
\[\begin{tikzcd}
\widtilde{X'} \ar[r] \ar[d] \lpbstrict      &\widtilde{X} \ar[d, "z"]        \\
X' \ar[r] \ar[d] \lpb      &X \ar[d]      \\
Y' \ar[r]      &Y
\end{tikzcd}\]
of DM-type morphisms between log algebraic stacks for which $z$ is strict and smooth. Then the diagram of log normal cones 
\[\begin{tikzcd}
\Cl{\widtilde{X'}/Y'} \ar[r] \ar[d, "s'"] \pb           &\Cl{\widtilde{X}/Y} \ar[d, "s"]        \\
\Cl{X'/Y'} \ar[r]          &\Cl{X/Y}
\end{tikzcd}\]
is cartesian and the arrows $s, s'$ are smooth epimorphisms. 

\end{lemma}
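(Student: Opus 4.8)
The plan is to realize both vertical maps as the quotient maps of short exact sequences of cone stacks (in the sense of Remark \ref{rmk:defnsesconestacks}) and then to identify the two sequences.

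First I would produce the two sequences. Since the top square is strict, $\widtilde{X} \to X$ is strict and smooth, and so is its base change $\widtilde{X'} \to X'$. Both are l.c.i., so $\Cl{\widtilde{X}/X} = \Nl{\widtilde{X}/X} = B\Tl{\widtilde{X}/X}$ (Proposition \ref{exactseqfornormalsheaves}), and likewise with primes. Applying Proposition \ref{lciseschangesource} to the composites $\widtilde{X} \to X \to Y$ and $\widtilde{X'} \to X' \to Y'$ then yields short exact sequences of cone stacks
\[B\Tl{\widtilde{X}/X} \to \Cl{\widtilde{X}/Y} \overset{s}{\to} \Cl{X/Y}|_{\widtilde{X}}, \qquad B\Tl{\widtilde{X'}/X'} \to \Cl{\widtilde{X'}/Y'} \overset{s'}{\to} \Cl{X'/Y'}|_{\widtilde{X'}},\]
where I use that $\widtilde{X} \to X$ strict factors through the open immersion $X \subseteq \Log X$, so that $\C{\Log X/\Log Y}|_{\widtilde{X}} = \Cl{X/Y}|_{\widtilde{X}}$ exactly as in the proof of Proposition \ref{lciseschangesource}. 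By definition of a short exact sequence of cone stacks, $s$ and $s'$ are smooth epimorphisms onto the restricted cones; post-composing with the smooth base-change maps $\Cl{X/Y}|_{\widtilde{X}} \to \Cl{X/Y}$ (pullbacks of the smooth $z$) identifies them with the arrows of the Lemma and gives the smooth-epimorphism claim.

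Next I would set up the comparison. Because the top square is a strict (hence ordinary) pullback and the bottom an fs pullback, transitivity of fs fibre products gives $\widtilde{X'} = \widtilde{X} \times_X X' = \widtilde{X} \times^\ell_Y Y'$. In particular the diagram of bases is cartesian, $\Tl{\widtilde{X'}/X'} = \Tl{\widtilde{X}/X}|_{\widtilde{X'}}$ (so the two kernels agree after restriction to $\widtilde{X'}$), and both the bottom square and the outer rectangle $\widtilde{X'} \to \widtilde{X}$ over $Y' \to Y$ are fs pullbacks; hence by Remark \ref{remarkfspullbacksinduceclimmsoflognormalsheaves} the Olsson morphisms $\Cl{X'/Y'} \hookrightarrow \Cl{X/Y}|_{X'}$ and $\Cl{\widtilde{X'}/Y'} \hookrightarrow \Cl{\widtilde{X}/Y}|_{\widtilde{X'}}$ are closed immersions. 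Now form $P := \Cl{\widtilde{X}/Y} \times_{\Cl{X/Y}} \Cl{X'/Y'}$, a cone over $\widtilde{X} \times_X X' = \widtilde{X'}$. The projection $P \to \Cl{X'/Y'}$ is the base change of $s$ along the closed immersion $\Cl{X'/Y'} \hookrightarrow \Cl{X/Y}$, so base-changing the first sequence above presents $P$ in a short exact sequence $B\Tl{\widtilde{X'}/X'} \to P \to \Cl{X'/Y'}|_{\widtilde{X'}}$ with the same kernel and quotient as the sequence for $\Cl{\widtilde{X'}/Y'}$. Compatibility of Olsson morphisms (Lemma \ref{compatibleolssonmors}, Remark \ref{rmk:natltyofsessandotherdisttriangle}) supplies a canonical comparison $u : \Cl{\widtilde{X'}/Y'} \to P$ over the identity of the common quotient and restricting to the identity on the common kernel; since a morphism of short exact sequences of cone stacks that is an isomorphism on the sub vector bundle stack and on the quotient cone is an isomorphism (immediate from the local splitting $C \simeq E \times_X D$ of Remark \ref{rmk:defnsesconestacks} together with $B\Tl{}$-equivariance), $u$ is an isomorphism and the square is cartesian.

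The hard part will be the final step: checking that $u$ genuinely respects both short-exact-sequence structures, i.e. that it is equivariant for the vector-bundle-stack actions and induces the identity on kernel and quotient. The quotient compatibility is forced by $\mathrm{pr}\circ u = s'$, but tracking the kernel identification rests on the base-change identity $\Tl{\widtilde{X'}/X'} = \Tl{\widtilde{X}/X}|_{\widtilde{X'}}$ and on the naturality of the exact sequences of Proposition \ref{lciseschangesource} recorded in Remark \ref{rmk:natltyofsessandotherdisttriangle}; once these are pinned down, the ``five lemma'' for short exact sequences of cone stacks closes the argument. Everything else is a routine matter of bookkeeping the restrictions of cones to the various bases.
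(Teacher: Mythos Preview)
Your proposal is correct and follows essentially the same route as the paper. The paper packages the argument slightly more efficiently: it writes down the map of short exact sequences coming from Proposition~\ref{lciseschangesource} and naturality, observes that the identification $\Tl{\widtilde{X'}/X'} = \Tl{\widtilde{X}/X}|_{\widtilde{X'}}$ makes the kernel map an equality, and concludes directly that the right-hand square of restricted cones is cartesian; then it stacks this with the base-change squares along $\widtilde{X}\to X$. Your explicit formation of $P$ together with the ``five lemma'' is just the unpacked version of that same step.
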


\begin{proof}

Proposition \ref{lciseschangesource} provides a map of short exact sequences of cone stacks: 
\[\begin{tikzcd}
B\Tl{\widtilde{X'}/X'} \ar[r] \ar[d, equals]       &\Cl{\widtilde{X'}/Y'} \ar[r, "t'"] \ar[d, hook] \ar[dr, very near start, phantom, "\ulcorner"]     &\Cl{X'/Y'}|_{\widtilde{X'}} \ar[d, hook]        \\
B\Tl{\widtilde{X}/X}|_{\widtilde{X'}} \ar[r]        &\Cl{\widtilde{X}/Y}|_{\widtilde{X'}} \ar[r, "\widtilde{t}"]     &\Cl{X/Y}|_{\widtilde{X'}}.        
\end{tikzcd}\]
Witness that the right square is cartesian because \cite{logcotangent} 
\[\Tl{\widtilde{X'}/X'} = \Tl{\widtilde{X}/X}|_{\widtilde{X'}}\]
and that the arrows $t', \widtilde{t}$ are clearly smooth epimorphisms. The arrow $\widtilde{t}$ is pulled back from the smooth epimorphism $t : \Cl{\widtilde{X}/Y} \to \Cl{X/Y}|_{\widtilde{X}}$, so we have the top pullback square
\[\begin{tikzcd}
\Cl{\widtilde{X'}/Y'} \ar[r] \ar[d, "t'"] \pb   \ar[dd, bend right=60, "s'", swap]          &\Cl{\widtilde{X}/Y} \ar[d, "t"] \ar[dd, bend left=60] \ar[phantom, "s", dr]    \\
\Cl{X'/Y'}|_{\widtilde{X'}} \ar[r] \ar[d] \pb          &\Cl{X/Y}|_{\widtilde{X}} \ar[r] \ar[d] \pb   &\widtilde{X}\ar[d]       \\
\Cl{X'/Y'} \ar[r]      &\Cl{X/Y} \ar[r]       &X
\end{tikzcd}\]
The composite vertical rectangle of cones is the diagram we are after, and so the fact that this square is cartesian is clear. It remains only to note the bent arrows $s, s'$ are smooth epimorphisms because they are the composites of $t, t'$ with pullbacks of the smooth epimorphism $\widtilde{X} \to X$.

\end{proof}}

\section{Log Intersection Theory}

The Log Intersection Theory package is defined the same way as usual \cite{virtpb}, mutatis mutandis.

\begin{definition}[Log Perfect Obstruction Theory]\label{lpotdefinition}
Define a \textit{Log Perfect Obstruction Theory} (hereafter ``\lpot'') for a DM-type morphism $f : X \rightarrow Y$ to be a closed immersion of cone stacks
\[\Cl{X/Y} \subseteq E \quad \quad \quad (\text{equiv. }\Nl{X/Y} \subseteq E)\]
of the log normal cone into a vector bundle stack $E$. 

Given an fs pullback square
\[\begin{tikzcd}
X' \arrow[r] \arrow[d, swap, "f'"] \lpb      &X \arrow[d, "f"]      \\
Y' \arrow[r]      &Y
\end{tikzcd}\]
and a \lpot \:$\Cl{X/Y} \subseteq E$ for $f$, the Olsson Morphism 
\[\Cl{X'/Y'} \overset{\varphi}{\hookrightarrow} \Cl{X/Y}|_{X'} \subseteq E|_{X'}\]
defines a ``Pullback'' \lpot.

A related notion of ``Pullback'' \lpot arises when $X' \to X$ is log \'etale and $f : X \to Y$ any DM-type map. Then Remark \ref{rmk:stretalecaseoflciseschangesource} shows the map 
\[\Cl{X'/Y} \to \Cl{X/Y}|_{X'}\]
is a closed immersion, and we can compose with an obstruction theory for $f$ to get one for the composite $X' \to X \to Y$.

Given a \lpot\, $\Cl{X/Y} \subseteq E$ for some $f$, suppose $X$ has a stratification by global quotient stacks and $Y$ is log smooth and equidimensional. Then \cite[Proposition 5.3.2]{kreschthesis} gives us a unique cycle 
\[\lvfc{X}{E} \in A_*X\]
which pulls back to the class $[\Cl{X/Y}] \in A_*E$. This class is called the \textit{Log Virtual Fundamental Class} (hereafter ``\lvirt'').
\end{definition}

\begin{remark}
When $\Log Y$ is equidimensional, so is $\Cl{X/Y}$. The correct definition of the \lvirt\, requires that the cone be equidimensional. If $Y$ is log smooth, $Y \subseteq \Log Y$ is dense. If $Y$ is also equidimensional, we get that $\Log Y$ is. This explains our assumptions in Definition \ref{lpotdefinition}. We don't include these assumptions in the definition of a \lpot\, only because we may have Log Gysin maps more generally. 
\end{remark}

\begin{definition}[Log Gysin Map]
Suppose a DM-type $f: X \rightarrow Y$ has a \lpot\, $\Cl{X/Y} \subseteq E$. 
Given a DM-type log map $k : V \rightarrow Y$ with $V$ log smooth and equidimensional, form the fs pullback: 
\[\begin{tikzcd}
W \arrow[d] \arrow[r] \lpb       &V \arrow[d, "k"]      \\
X \arrow[r, "f"]       &Y
\end{tikzcd}\]
The embedding 
\[\Cl{W/V} \subseteq \Cl{X/Y}|_W \subseteq E|_W\]
results in a class
\[[\Cl{W/V}, E] \in A_*W.\]
Mimicking \cite{virtpb}, we call this ``map'' 
\[f^! = f_E^!\] 
the \textit{Log Gysin Map}. 
\end{definition}

\begin{remark}\label{rmk:cptblitydatumextendobstthies}
Consider a DM-type morphism $f : X \to Y$ of log algebraic stacks. The cartesian square
\[\begin{tikzcd}
\Log X \ar[r, "s"]  \ar[d] \pb     &X \ar[d]      \\
\Log^2 Y \ar[r, "d_0"]        &\Log Y
\end{tikzcd}\]
from Remark \ref{rmk:compareolssonmorandlogsoneverything} results in a closed embedding 
\[\C{\Log X/\Log Y} \simeq \C{\Log X/\Log^2 Y} \subseteq \Cl{X/Y}|_{\Log X}\]
which we use to canonically extend an obstruction theory $\Cl{X/Y} \subseteq E$ to a closed embedding
\[\C{\Log X/\Log Y} \subseteq E|_{\Log X}.\]

Now suppose given a composable pair $X \overset{f}{\to} Y \overset{g}{\to} Z$ as above and equip $f, g$ with \lpot's: 
\[\Cl{X/Y} \subseteq F, \quad \quad \quad \Cl{Y/Z} \subseteq G.\]

Define a \textit{compatibility datum} for such a pair to be a traditional compatibility datum \cite[Definition 4.5]{virtpb} for 
\[X \overset{f}{\rightarrow} \Log Y \overset{g}{\rightarrow } \Log^2 Z,\]
endowing $\Log Y \to \Log^2 Z$ with the extended obstruction theory
\[\C{\Log Y /\Log^2 Z} \simeq \Cl{Y/Z}|_{\Log Y} \subseteq G|_{\Log Y}.\]
\end{remark}

We offer a couple of basic remarks about our definitions before the examples and theorems.

\begin{remark}

The map $f^!$ just defined takes in log smooth equidimensional stacks DM over $Y$ and produces classes in certain Chow Groups. We do not know whether this operation may be extended to the ``Log Chow'' groups of \cite{logchowrecentpaper}. 

\end{remark}

\begin{remark}\label{gysinmapofpullbackisthesame}
Given an fs pullback square
\[\begin{tikzcd}
X' \arrow[r] \arrow[d, swap, "f'"] \lpb      &X \arrow[d, "f"]      \\
Y' \arrow[r]      &Y
\end{tikzcd}\]
of DM maps where $f$ has a \lpot\: $\Cl{X/Y} \subseteq E$, endow $f'$ with the Pullback \lpot. Then 
\[f^! = f'^!\]
when applied to log smooth, equidimensional log schemes over $Y'$. 
\end{remark}

\begin{remark}\label{loglcigysinpreservesfundclass}
If $\Cl{X/Y} = \Nl{X/Y}$ for a DM morphism $f : X \rightarrow Y$, we can take $E = \Nl{X/Y}$ as our obstruction theory.
If $X, Y$ are equidimensional and $Y$ is log smooth, unwinding definitions shows
\[f^!(Y) = [X],\]
where $[X]$ is the fundamental class of $X$. 
\end{remark}

\begin{remark}\label{rmk:loggysindoesntcommutewpfwd}

Log Gysin Maps don't commute with pushforward: Let 
\[\begin{tikzcd}
X' \arrow[r, "p"] \arrow[d, "f'", swap] \lpb      &X \arrow[d, "f"]      \\
Y' \arrow[r, "q"]      &Y
\end{tikzcd}\]
be an fs pullback square. Endow $f : X \to Y$ with a \lpot\, $\Cl{X/Y} \subseteq E$ and give $f'$ the pullback obstruction theory. Then the usual equality \cite[Theorem 4.1 (i)]{virtpb} can fail:
\[f^! q_* \neq p_* f'^!.\]
Take the square of Example \ref{extwoblowupsofpoint}
\[\begin{tikzcd}
\point_\NN \arrow[d, equals] \arrow[r] \lpb       &D \arrow[d]       \\
\point_\NN \arrow[r]       &\point_{\NN^2}
\end{tikzcd}\]
and apply both operations to $[\point_\NN]$ for a counterexample. 

\end{remark}

\begin{remark}\label{rmk:vfcsdontpfwd}

Virtual Fundamental Classes don't push forward along log blowups: Let $X \to F$ be the morphism from a stack $X$ to its Artin Fan (the reader may take a traditional chart instead of $F$). Choose a finite subdivision $\widhat{F} \to F$, and form the fs pullback:
\[\begin{tikzcd}
\widhat{X} \arrow[r] \arrow[d, "p", swap] \lpbstrict          &\widhat{F} \arrow[d]         \\
X \arrow[r]       &F.
\end{tikzcd}\]
Suppose given a map $f : X \to Y$ with a \lpot\, $\Cl{X/Y} \subseteq E$ and equip $f \circ p : \widhat{X} \to Y$ with the pullback obstruction theory
\[\Cl{\widhat{X}/Y} \subseteq \Cl{X/Y}|_{\widhat{X}} \subseteq E|_{\widhat{X}}.\]

Then possibly
\[p_* \lvfc{\widhat{X}}{E} \neq \lvfc{X}{E}.\]
A counterexample is again given by $p : D \to \point_{\NN^2}$, $f : \point_{\NN^2} \longequals \point_{\NN^2}$ as in Example \ref{extwoblowupsofpoint}: $p_*[\mathbb{P}^1] = 0$ for dimension reasons. 

\end{remark}

The rest of this section and the next should reassure the disheartened reader that commonsense fomulas of ordinary intersection theory do remain true in the log setting. We regard Remarks \ref{rmk:loggysindoesntcommutewpfwd}, \ref{rmk:vfcsdontpfwd} as defects of the usual notion of pushforward $p_*$ in the log setting. {\color{\newstuffcolor}The morphisms $\point_\NN \to D$, $D \to \point_{\NN^2}$ of Example \ref{extwoblowupsofpoint} are monomorphisms in the fs category, and $\point_\NN \to \point_{\NN^2}$ should be a cycle of \textit{dimension one} in the ``two dimensional'' log point $\point_{\NN^2}$. }

The paper \cite{logchowrecentpaper} introduces log chow groups to correct this defect, in particular via suitable notions of dimension and degree. See also \cite{propermonomsoflogschemesmochizuki}. We are eager to see which of our results may be extended using this improved technology.

For now, we content ourselves to use the observation of \cite[Proposition 4.3]{niziol} that log blowups are birational if the target is log smooth. We will use it to prove that weaker forms of the na\"ive guesses of Remarks \ref{rmk:loggysindoesntcommutewpfwd}, \ref{rmk:vfcsdontpfwd} do hold true, as well as straightforward commutativity of the Gysin Maps.

We will need to use Costello's notion of ``pure degree $d$'' \cite[before Theorem 5.0.1]{costello} to make sense of pushforward on the level of cycles, given by cones embedded in vector bundles. The next theorem allows us to check statements about \lvirt's after a log blowup if the target is log smooth. Its statement and proof are similar to \cite{birationalinvarianceabramovichwise}.

\begin{theorem}\label{pfwdoflvfcslblowup}
Suppose given a DM-type map $f: X \rightarrow Y$ between locally noetherian algebraic stacks locally of finite type over $\CC$ where $Y$ is log smooth and equidimensional. Endow $f$ with a \lpot\,$E$ and let $X \rightarrow F$ be any DM morphism to an Artin Fan. Take the fs pullback along a finite subdivision 
\begin{equation}\label{eqn:pfwdoflvfcslblowuplblowupsquare}
\begin{tikzcd}
\widhat{X} \arrow[r] \arrow[d, swap, "p"] \lpb     &\widhat{F} \arrow[d]        \\
X \arrow[r]       &F.
\end{tikzcd}
\end{equation}

Endow $f \circ p$ with the pullback \lpot 
\[\Cl{\widhat{X}/Y} \subseteq \Cl{X/Y}|_{\widhat{X}} \subseteq E|_{\widhat{X}}.\]

Then 
\[p_* \lvfc{\widhat{X}}{E} = \lvfc{X}{E}\]
\end{theorem}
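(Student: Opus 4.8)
The plan is to reduce the statement to Costello's pushforward comparison applied to the log normal cones, exploiting that a log blowup of a log smooth target is birational. First I would set up the two log normal cones $\Cl{\widehat{X}/Y}$ and $\Cl{X/Y}$ together with the closed embeddings $\Cl{\widehat{X}/Y} \subseteq \Cl{X/Y}|_{\widehat{X}} \subseteq E|_{\widehat{X}}$ coming from the pullback \lpot. The goal is to compare the cycles $[\Cl{\widehat{X}/Y}, E]$ and $[\Cl{X/Y}, E]$ under $p_*$. Since $X \to F$ factors the log structure through the Artin Fan and $\widehat{F} \to F$ is a finite subdivision, the map $p : \widehat{X} \to X$ is the fs pullback of the log blowup $\widehat{F} \to F$; I would invoke \cite[Proposition 4.3]{niziol} to conclude that $p$ is \emph{birational} (proper, representable, and an isomorphism over a dense open), using that $Y$, and hence the relevant local models of $X$, are log smooth so that $\Log X$ is equidimensional and reduced along its dense open $X \subseteq \Log X$.

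Next I would relate the cones directly. Because $\widehat{F} \to F$ is log \'etale and $p$ is an fs pullback, Remark \ref{rmk:stretalecaseoflciseschangesource} (or rather its log-\'etale analysis) tells us that $\Cl{\widehat{X}/Y}$ is obtained from $\Cl{X/Y}$ by pulling back along $p$ in the appropriate sense, and is a closed subcone of $\Cl{X/Y}|_{\widehat{X}}$. The key point is a dimension/degree computation: over the dense open where $p$ is an isomorphism, the two cones coincide, so they have the same support and the same multiplicities generically on each component dominating a component of $\Cl{X/Y}$. This is exactly the input needed for Costello's theorem on pushforward of cone cycles of pure relative degree. I would phrase the statement as: $\Cl{\widehat{X}/Y} \to \Cl{X/Y}|_{\widehat{X}}$ is of pure degree one in the sense of \cite[before Theorem 5.0.1]{costello}, since $p$ is birational.

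Then I would apply Costello's Formula in the form proved earlier for the log setting. Concretely, embedding both cones in $E$ and using the commutative diagram
\[
\begin{tikzcd}
\Cl{\widehat{X}/Y} \ar[r, hook] \ar[d] &E|_{\widehat{X}} \ar[d] \\
\Cl{X/Y} \ar[r, hook] &E,
\end{tikzcd}
\]
the pushforward $p_*[\Cl{\widehat{X}/Y}, E]$ is computed by intersecting $p_*[\Cl{\widehat{X}/Y}]$ with the zero section of $E$. Since $p$ is birational and the cones agree generically, $p_*[\Cl{\widehat{X}/Y}] = [\Cl{X/Y}]$ in $A_* E$ (components contracted by $p$ push forward to zero by dimension, exactly as in Remark \ref{rmk:vfcsdontpfwd}, while the dominant component maps isomorphically with multiplicity one). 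Intersecting with the zero section and using the characterization of \lvirt's via pullback to $E$ then yields $p_* \lvfc{\widehat{X}}{E} = \lvfc{X}{E}$.

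The main obstacle I anticipate is justifying the pure-degree-one claim rigorously: I must verify that every irreducible component of $\Cl{\widehat{X}/Y}$ either dominates a component of $\Cl{X/Y}$ with multiplicity one or is contracted by $p$, and that no spurious components of higher multiplicity appear. This is where the log smoothness of $Y$ is essential, since it guarantees $\Log X$ is equidimensional and that the birational model $\Log\widehat{X} \to \Log X$ does not introduce embedded or excess components in the cone; without it the cone could fail to be pure-dimensional and the degree bookkeeping would break down. Handling this carefully, likely by reducing via charts (Construction \ref{katofactorization}) to the strict closed immersion into a log smooth ambient space where the cones become honest normal cones and the birational comparison is classical, is the technical heart of the argument.
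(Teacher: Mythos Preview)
Your overall strategy---show that the map of cones $\Cl{\widehat{X}/Y} \to \Cl{X/Y}$ is of pure degree one, then conclude by pushing forward and intersecting with the zero section of $E$---is exactly the paper's approach. The gap is in your justification of ``pure degree one.''

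You claim that $p : \widehat{X} \to X$ is birational by invoking \cite[Proposition 4.3]{niziol}. But that result requires the \emph{target} of the log blowup to be log smooth, and $X$ is not assumed log smooth---only $Y$ is. In fact $p$ need not be birational at all under the hypotheses of the theorem: take $X = \point_{\NN^2}$ mapping strictly to $Y = \mathbb{A}^2$ (which is log smooth), and let $\widehat{X}$ be the exceptional $\mathbb{P}^1$ of Example \ref{extwoblowupsofpoint}. Then $p : \mathbb{P}^1 \to \point$ is certainly not birational, yet the theorem still holds because the log normal cones $\Cl{\mathbb{P}^1/\mathbb{A}^2}$ and $\Cl{\point/\mathbb{A}^2}$ are both two-dimensional and the map between them is pure degree one. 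So ``$p$ birational $\Rightarrow$ cones agree generically'' is not the mechanism here.

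The paper's fix is to shift the birationality argument to a place where it is actually valid: the Artin Fan. After \'etale-localizing in $X$ to make $X \to F$ strict, one applies Costello's argument to the square \eqref{eqn:pfwdoflvfcslblowuplblowupsquare} itself, using that $\widhat{F} \to F$ \emph{is} birational (Artin Fans are log \'etale over a point, hence log smooth, so Nizio{\l} applies there). This yields that $\Cl{\widehat{X}/\widehat{F}} \to \Cl{X/F}$ is pure degree one. One then transfers this to $\Cl{\widehat{X}/Y} \to \Cl{X/Y}$ by inserting $F \times Y$ and $\widehat{F} \times Y$ and using the short exact sequences of Proposition \ref{targetsmdescentnormalcone} with kernel $\Tl{Y}$; pure degree one pulls back along the smooth quotient maps in those sequences. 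Your final paragraph gestures toward a chart reduction via Construction \ref{katofactorization}, but that construction is not what is used here---the essential move is to exploit the Artin Fan, not a local factorization of $f$.
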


\begin{proof}

We will actually show that the map 
\[t : \Cl{\widhat{X}/Y} \to \Cl{X/Y}\]
is of pure degree one. Then the pushforward $A_*E|_{\widhat{X}} \to A_*E$ sends the class of one cone to the other, and ``intersecting with the zero section'' gives the equality of VFC's. 

We will reduce to the case where $X \to F$ is strict. The statement ``$t$ is of pure degree one'' may be verified \'etale-locally in $X$, as we now argue. 

Given a strict \'etale cover $X' \to X$, write $\widhat{X'} := \widhat{X} \times_X X'$. We have a pullback diagram
\[\begin{tikzcd}
\Cl{\widhat{X'}/F} \ar[r, "t'"] \ar[d] \pb      &\Cl{X'/F} \ar[r] \ar[d] \pb      &X' \ar[d]         \\
\Cl{\widhat{X}/F} \ar[r, "t"]       &\Cl{X/F} \ar[r]       &X,
\end{tikzcd}\]
as in Remark \ref{rmk:stretalecaseoflciseschangesource}. Since $X' \to X$ is \'etale, the other vertical arrows are as well. The property ``pure degree one'' is smooth-local in the target, so $t$ has it if $t'$ does. 

Now \'etale-localize in $X$ so that $X \to F$ factors through a chart $X \to F_X \to F$ for $X$. Take the fs pullback along the subdivision $\widhat{F} \to F$: 
\[\begin{tikzcd}
\widhat{X} \ar[r] \ar[d] \lpbstrict         &\widhat{F}_X \ar[r] \ar[d] \lpb       &\widhat{F} \ar[d]         \\
X \ar[r]      &F_X \ar[r]        &F.
\end{tikzcd}\]
We can then replace $F$ by $F_X$ in the proof of the theorem and assume $X \to F$ is strict.

Apply the proof of Costello's Formula \cite[Theorem 5.0.1]{costello} to \eqref{eqn:pfwdoflvfcslblowuplblowupsquare} to conclude 
\[t : \Cl{\widhat{X}/\widhat{F}} \rightarrow \Cl{X/F}\]
is of pure degree one, since $\widhat{F} \to F$ is birational. 

Expanding upon \eqref{eqn:pfwdoflvfcslblowuplblowupsquare}: 
\[\begin{tikzcd}
\widhat{X} \arrow[r] \arrow[d] \lpb     &\widhat{F} \times Y \arrow[r] \lpb \arrow[d]      &\widhat{F} \arrow[d]    \\
X \arrow[r]      &F \times Y \arrow[r] \arrow[d]     &F      \\
        &Y,
\end{tikzcd}\]

we get a map of exact sequences of cone stacks:
\[\begin{tikzcd}
\Tl{Y}|_{\widhat{X}} \arrow[r] \arrow[d]     &\Cl{\widhat{X}/\widhat{F} \times Y} \pb  \arrow[r] \arrow[d, "\widhat{t}"]    &\Cl{\widhat{X}/\widhat{F}} \arrow[d, "t"]       \\
\Tl{Y}|_X \arrow[r]     &\Cl{X/F \times Y}  \arrow[r]    &\Cl{X/F}.       
\end{tikzcd}\]

After pulling the bottom row back to $\widhat{X}$, we get the identity on tangent bundles and see that the right square is a pullback. Since the property``of pure degree one'' pulls back along smooth maps, the quotient maps in exact sequences of cone stacks are smooth, and $t$ is pure degree one, $\widhat{t}$ is also pure degree one. Because $F, \widhat{F}$ are log \'etale over a point, $\Cl{\widhat{X}/\widhat{F} \times Y} = \Cl{\widhat{X}/\widhat{Y}}$ and $\Cl{X/F \times Y} = \Cl{X/Y}$, so the claim is proven. 

\end{proof}

\begin{example}\label{extwoblowupsreprisecautionpfwdlvfcs}
One must be cautious, for Theorem \ref{pfwdoflvfcslblowup} is false without the assumption that $Y$ is log smooth. Recall the exceptional divisor $D \rightarrow \point$ of the blowup of $\mathbb{A}^2$ at the origin $\point = \Spec \CC$ from Example \ref{extwoblowupsofpoint} and its normal cone $\Cl{D/\point} = \mathbb{P}^1$. 

For the sake of contradiction, let $\widhat{X} = \mathbb{P}^1$ and $X = Y = \point$ as in the theorem. Endow $\Cl{\point/\point} = \point$ with the initial \lpot, $E = \point$. Then 

\[\lvfc{\widhat{X}}{E} = \lvfc{D}{E} = [\mathbb{P}^1]\]
{\noindent}and
\[\lvfc{X}{E} = \lvfc{\point}{E} = [\point],\]

{\noindent}but again $p_*[\mathbb{P}^1] = 0$ for dimension reasons. 

\end{example}

\begin{theorem}[Commutativity of Log Gysin Map]\label{commlogpb}
Given a composable pair of DM-type maps between log algebraic stacks
\[X \overset{f}{\rightarrow} Y \overset{g}{\rightarrow} Z,\]
outfit $f$, $g$, and $g \circ f$ with log obstruction theories $F$, $G$, $E$ and a compatibility datum (Remark \ref{rmk:cptblitydatumextendobstthies}). Require $X$ to admit stratifications by global quotients.

If $k : V \rightarrow Z$ is a log smooth and equidimensional $Z$-stack and $k$ is DM-type, take fs pullbacks: 
\[\begin{tikzcd}
T \arrow[r] \arrow[d] \lpb       &U \arrow[r] \arrow[d] \lpb      &V \arrow[d]      \\
X \arrow[r]       &Y \arrow[r]      &Z.
\end{tikzcd}\]

Then the equality
\begin{equation}\label{equalityofconesfortriangle}
[\Cl{g \circ f} \subseteq E] = [\Cl{\Cl{g}|_X/\Cl{g}} \subseteq F \oplus G|_X]
\end{equation}
holds on $X$.

\end{theorem}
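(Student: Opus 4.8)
The plan is to reduce the entire statement to the ordinary (non-logarithmic) commutativity of virtual pullbacks proved by Manolache \cite{virtpb}, exploiting the fact that our log intersection theory is engineered to become ordinary intersection theory after applying $\Log$. The crucial observation is that, by the very definition of a compatibility datum (Remark \ref{rmk:cptblitydatumextendobstthies}), the data $F, G, E$ together with the compatibility datum for the log composable pair $X \overset{f}{\to} Y \overset{g}{\to} Z$ \emph{is} an ordinary compatibility datum for the ordinary composable pair
\[X \longrightarrow \Log Y \longrightarrow \Log^2 Z,\]
in which $X \to \Log Y$ is strict, the middle map carries the extended obstruction theory $\C{\Log Y/\Log^2 Z} \simeq \Cl{Y/Z}|_{\Log Y} \subseteq G|_{\Log Y}$, and the composite carries $E$. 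It therefore suffices to translate each cone appearing in \eqref{equalityofconesfortriangle} into the corresponding ordinary normal cone of this pair and invoke the ordinary theorem.

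First I would assemble the dictionary of identifications. By definition $\Cl{X/Y} = \C{X/\Log Y}$, while Proposition \ref{targetsmdescentnormalcone} (together with the log-\'etale face maps of Remark \ref{rmk:logetfacemapslogstacks}) supplies the isomorphisms $\Cl{g \circ f} = \Cl{X/Z} \simeq \C{X/\Log^2 Z}$ and $\Cl{Y/Z}|_{\Log Y} \simeq \C{\Log Y/\Log^2 Z}$, exactly as in the proof of that proposition. Under these the left-hand term $\Cl{g \circ f} \subseteq E$ becomes $\C{X/\Log^2 Z} \subseteq E$, the ordinary normal cone of the composite with its obstruction theory.

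The main work is matching the double cone. Since $X \to \Log Y$ is strict, restriction of $\Cl{g}$ along $f \colon X \to Y$ factors as
\[\Cl{g}|_X \longrightarrow \Cl{g}|_{\Log Y} \longrightarrow \Cl{g},\]
where the first arrow is strict (a base change of the strict $X \to \Log Y$) and the second is log \'etale (a base change of $\Log Y \to Y$). Hence, by the log-\'etale invariance of the log normal cone (Remark \ref{axiomaticcharacterizationoflognormalcones}) and the coincidence of log and ordinary cones for strict maps (Remark \ref{logconesameasusualifstrict}),
\[\Cl{\Cl{g}|_X/\Cl{g}} \; \simeq \; \Cl{\Cl{g}|_X / \Cl{g}|_{\Log Y}} \; = \; \C{\C{\Log Y/\Log^2 Z}|_X / \C{\Log Y/\Log^2 Z}},\]
the ordinary relative cone of the pair $X \to \Log Y \to \Log^2 Z$, embedded in $F \oplus G|_X$ via the compatibility datum. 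With both sides of \eqref{equalityofconesfortriangle} now identified with the two cone-cycles of Manolache's cone equality \cite{virtpb} for $X \to \Log Y \to \Log^2 Z$ — whose hypotheses (DM-type maps, perfect obstruction theories, stratification of $X$ by global quotients) are all in force — the desired equality follows from the ordinary statement.

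The step I expect to be the main obstacle is the third one: verifying that the formation of the double cone genuinely commutes with passage to $\Log$, i.e. that the log structures carried by the cone stacks $\Cl{g}$ and $\Cl{g}|_X$ and the (possibly non-strict) restriction along $f$ interact with $\Log Y$ so that $\Cl{g}|_X \to \Cl{g}$ really does factor as strict followed by log \'etale as claimed, and that the embedding into $F \oplus G|_X$ is the one dictated by the compatibility datum. Once this bookkeeping is pinned down — using the naturality of the exact sequences of cone stacks (Remark \ref{rmk:natltyofsessandotherdisttriangle}) — the remainder is a direct citation of the ordinary result.
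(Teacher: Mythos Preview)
Your approach is essentially the paper's: both recognize that the compatibility datum of Remark~\ref{rmk:cptblitydatumextendobstthies} is literally an ordinary compatibility datum for $X \to \Log Y \to \Log^2 Z$, identify the two sides of \eqref{equalityofconesfortriangle} with the ordinary cone classes of that composable pair, and then cite \cite[Theorem~4.8]{virtpb}.

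Two points are worth noting. First, a minor slip in your double-cone identification: there is no map ``$\Cl g|_{\Log Y} \to \Cl g$'' in the direction you write. Rather $\Cl g = \C{Y/\Log Z}$ is the restriction of $\C{\Log Y/\Log Z}$ along the strict open immersion $Y \subseteq \Log Y$, so the correct picture is $\Cl g|_X \to \Cl g \hookrightarrow \C{\Log Y/\Log Z}$, with the last arrow a strict \'etale open immersion; your conclusion $\Cl{\Cl g|_X/\Cl g} \simeq \C{\C{\Log Y/\Log^2 Z}|_X/\C{\Log Y/\Log^2 Z}}$ is nonetheless correct for exactly that reason. Second, and more substantively, the paper inserts one technical step you omit: $\Log Y$ and $\Log^2 Z$ are not quasicompact, so before invoking \cite{virtpb} the paper uses Lemma~\ref{ftfactor} to factor $X \to \Log Y \to \Log^2 Z$ through quasicompact opens $U \subseteq \Log Y$, $V \subseteq \Log^2 Z$, and then checks that the relevant cones over $X$ are unchanged by this replacement. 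You should add this step; otherwise the citation of Manolache's theorem is not on firm ground.
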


\begin{proof}

Pullback via $k$ all obstruction theories and their compatibility datum to reduce to showing the theorem for $k: V \longequals Z$. We essentially apply \cite[Theorem 4.8]{virtpb} to $X \rightarrow \Log Y \rightarrow \Log^2 Z$, endowed with the compatible triple $F, G, E$ by composing with an isomorphism of distinguished triangles: 
\[\begin{tikzcd}
G|_X \arrow[r] \arrow[d]        &F \arrow[r] \arrow[d]      &E \arrow[d]      \\
\ccx{\Log Y/\Log Z}|_X \arrow[r] \arrow[d, "\sim"]      &\lccx{X/Z} \arrow[r] \arrow[d, equals]     &\lccx{X/Y}\arrow[d, "\sim"]        \\
\ccx{\Log Y/\Log^2Z}|_X \arrow[r]     &\ccx{X/\Log Z} \arrow[r]        &\ccx{X/\Log^2 Z}. 
\end{tikzcd}\]

Use Lemma \ref{ftfactor} repeatedly to obtain a strict diagram with $U, V$ quasicompact and \'etale over the stacks $\Log Y, \Log^2 Z$:

\[\begin{tikzcd}
X \arrow[r] \arrow[dr, bend right]       &U \arrow[r] \arrow[d]      &V \arrow[d]      \\
        &\Log Y \arrow[r]     &\Log^2 Z.
\end{tikzcd}\]

Endow the cone $\C{\Log Y/\Log Z}$ with the pullback log structure from $\Log Y$ and pull it back along the part of the diagram above $\Log Y$:
\[\begin{tikzcd}
\C{\Log Y/\Log Z}|_X=\C{U/V}|_X \arrow[r] \arrow[dr, bend right]     &\C{U/V} \arrow[d]        \\
        &\C{\Log Y/\Log Z}.
\end{tikzcd}\]
The triangle is strict and the map $\C{U/V} \to \C{\Log Y/\Log Z}$ is pulled back from the \'etale $U \to \Log Y$, so
\[\Cl{\C{\Log Y/\Log Z}|_X/\C{\Log Y/\Log Z}} = \C{\C{U/V}|_X/\C{U/V}}.\]

Write $i : X \to U$ $j : U \to V$ for the maps. Then the compatibility datum pulls back and \cite[Theorem 4.8]{virtpb} gives us
\[(j \circ i)^!_E ([V]) = i_F^! \circ j_G^!([V]).\]

Unwinding definitions, this becomes 
\begin{equation}\label{openstrictversionequalityofconesfortriangle}
[\C{X/V} \subseteq E] = [\C{\C{U/V}|_X/\C{U/V}} \subseteq F \oplus G|_X].
\end{equation}

This may be rewritten as 
\[[\Cl{X/Z} \subseteq E] = [\Clstrict{\C{\Log Y/\Log Z}|_X/\C{\Log Y/\Log Z}} \subseteq F \oplus G|_X],\]
the claimed equality of classes.

\end{proof}

\begin{remark}
Theorem \ref{commlogpb} says that 
\[(g \circ f)^! = f^! g^!\]
in the sense that any log smooth, equidimensional log stack over $Z$ has rationally equivalent images under these two operations. 

\end{remark}

\begin{remark}\label{rmk:constructlpotandcptibilitydatumforsquare}

Consider an fs pullback of DM-type morphisms between log algebraic stacks: 
\[\begin{tikzcd}
X' \ar[r, "p"] \ar[d, "f'", swap] \lpb      &X \ar[d, "f"]      \\
Y' \ar[r, "q"]      &Y.
\end{tikzcd}\]
Write $r : X' \to Y$ for the composite $f \circ p = q \circ f'$. If $f, q$ are endowed with \lpot's $\Cl{X/Y} \subseteq F$, $\Cl{Y'/Y} \subseteq E$, how should we give $r$ a \lpot?

The fs pullback square induces a pullback of stacks, which may be reexpressed as a ``magic square:''
\[\begin{tikzcd}
\Log X' \ar[r] \ar[d] \pb         &\Log X \ar[d]         &\phantom{a} \ar[dr, phantom, "\rightsquigarrow"]       &       &\Log X' \ar[d] \ar[r] \pb        &\Log X \times \Log Y' \ar[d]        \\
\Log Y' \ar[r]         &\Log Y         &       &\phantom{a}       &\Log Y \ar[r]         &\Log Y \times \Log Y.
\end{tikzcd}\]
The magic square induces a closed immersion 
\[\C{\Log X'/\Log Y} \subseteq \C{\Log X /\Log Y}|_{\Log X'} \times_{\Log X'} \C{\Log Y'/\Log Y}|_{\Log X'}\]
which pulls back to a closed immersion
\[\Cl{X'/Y} \subseteq \C{\Log X /\Log Y}|_{X'} \times_{X'} \C{\Log Y'/\Log Y}|_{X'}\]
on $X'$. As in Remark \ref{rmk:cptblitydatumextendobstthies}, we have closed embeddings $\C{\Log X/\Log Y} \subseteq \Cl{X/Y}|_{\Log X}$, $\C{\Log Y'/\Log Y} \subseteq \Cl{Y'/Y}|_{\Log Y'}$. We endow $r$ with the \lpot given by the composite:
\[\Cl{X'/Y} \subseteq \C{\Log X /\Log Y}|_{X'} \times_{X'} \C{\Log Y'/\Log Y}|_{X'} \subseteq \Cl{X/Y}|_{X'} \times_{X'} \Cl{Y'/Y}|_{X'} \subseteq F|_{X'} \times_{X'} E|_{X'}.\]

We now construct a compatibility datum for the triangle $r = q \circ f'$, leaving the reader to apply the same argument to the other triangle $r = f \circ p$. By the definitions of the \lpot's, we have a commutative diagram: 
\[\begin{tikzcd}
\Cl{X'/Y'} \ar[d, hook] \ar[r]      &\Cl{X'/Y} \ar[r] \ar[d, hook]      &\C{\Log Y'/\Log Y}|_{X'} \ar[d, hook]       \\
F|_{X'} \ar[r, "(0 \times id)"]       &E|_{X'} \times_{X'} F|_{X'} \ar[r]         &E|_{X'}.
\end{tikzcd}\]
To be clear, the morphism $F|_{X'} \to E|_{X'} \times_{X'} F|_{X'}$ is the vertex map times the identity. It's clear the bottom row comes from a distinguished triangle in the derived category and the top row comes from Remark \ref{rmk:natltyofsessandotherdisttriangle}. 

\end{remark}

\begin{corollary}
Suppose given an fs pullback square 
\[\begin{tikzcd}
X' \ar[r, "p"] \ar[d, "f'", swap] \lpb      &X \ar[d, "f"]      \\
Y' \ar[r, "q"]      &Y
\end{tikzcd}\]
of DM-type morphisms between log algebraic stacks which admit stratifications by quotient stacks. Outfit $q$ with a \lpot $E$ and $f$ with a \lpot $F$; give $p, f'$ the pullback obstruction theories. Then 
\[f'^! \circ q^! = p^! \circ f^!\]
in the sense that the operations send any log smooth equidimensional input stack to the same class in $A_* X'$. 

\end{corollary}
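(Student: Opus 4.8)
The plan is to reduce the commutativity of the square entirely to the commutativity of the Log Gysin Map along a composable pair, Theorem \ref{commlogpb}, applied to the two factorizations of the composite $r : X' \to Y$ through the corners of the square. Write $r = f \circ p = q \circ f'$. Remark \ref{rmk:constructlpotandcptibilitydatumforsquare} equips $r$ with a single \lpot $E_r$, embedding $\Cl{X'/Y}$ into $F|_{X'} \times_{X'} E|_{X'}$, together with compatibility data for \emph{both} triangles $r = q \circ f'$ and $r = f \circ p$; this is the only place the ``magic square'' structure is used, and the two compatibility data share the same composite theory $E_r$.

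Given a log smooth, equidimensional, DM-type input $k : V \to Y$, I would first observe that the two intermediate stacks $Y' \times^\ell_Y V \to Y'$ and $X \times^\ell_Y V \to X$ are again log smooth and equidimensional, being fs base changes of $k$, so that all four Gysin maps in sight are defined and the input hypotheses of Theorem \ref{commlogpb} are met. The stratification hypothesis on $X'$ is automatic: each corner of the square admits stratifications by global quotients, and this property is inherited along the DM-type maps of the square.

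Next I would apply Theorem \ref{commlogpb} to $X' \overset{f'}{\to} Y' \overset{q}{\to} Y$, taking the pullback obstruction theory $F|_{X'}$ on $f'$, the theory $E$ on $q$, the composite theory $E_r$, and the compatibility datum of Remark \ref{rmk:constructlpotandcptibilitydatumforsquare}; this yields $r^! = f'^! \circ q^!$. Applying the same theorem to $X' \overset{p}{\to} X \overset{f}{\to} Y$, with the pullback theory $E|_{X'}$ on $p$, the theory $F$ on $f$, the same composite theory $E_r$, and the other compatibility datum, yields $r^! = p^! \circ f^!$. Comparing the two expressions for $r^!$ gives the claimed equality $f'^! \circ q^! = p^! \circ f^!$ in $A_* X'$.

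The only genuine work is bookkeeping: checking that the obstruction theories named in the corollary (the pullback theories $F|_{X'}$ on $f'$ and $E|_{X'}$ on $p$) are precisely the factor theories appearing in the two compatibility data of Remark \ref{rmk:constructlpotandcptibilitydatumforsquare}, and that the composite theory $E_r$ is genuinely common to both triangles. The main obstacle, such as it is, is already resolved in that Remark: because the square is an fs pullback but likely not a pullback of underlying stacks, one cannot invoke ordinary commutativity of Gysin maps and must instead route the compatibility data through the $\Log$ and magic-square construction. Once that is in hand, the corollary is a formal consequence of Theorem \ref{commlogpb}.
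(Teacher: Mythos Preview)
Your proposal is correct and follows essentially the same approach as the paper: both define $r = f \circ p = q \circ f'$, invoke Remark \ref{rmk:constructlpotandcptibilitydatumforsquare} to equip $r$ with a single \lpot\ and compatibility data for both triangles, and then apply Theorem \ref{commlogpb} twice to obtain $p^! \circ f^! = r^! = f'^! \circ q^!$. Your version simply spells out more of the bookkeeping (intermediate stacks, stratification inheritance) that the paper leaves implicit.
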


\begin{proof}

Denote by $r : X' \to Y$ the map $f \circ p = q \circ f'$. Apply Theorem \ref{commlogpb} to both commutative triangles using the compatibility datum constructed in Remark \ref{rmk:constructlpotandcptibilitydatumforsquare} to see that 
\[p^! \circ f^! = r^! = f'^! \circ q^!.\]

\end{proof}

\section{The Log Costello Formula}

This section proves a log analogue of the Costello Formula \cite[Theorem 5.0.1]{costello}. We will have more to say building on future work \cite{ourcorrectiontocostello}.

\begin{theorem}\label{logcostello2}
Consider an fs pullback square of DM-type maps between algebraic stacks: 
\[\begin{tikzcd}
X' \arrow[r, "p"] \arrow[d, "f'"] \lpb      &X \arrow[d, "f"]       \\
Y' \arrow[r, "q"]        &Y.
\end{tikzcd}\]

Assume
\begin{itemize}
    \item $Y' \to Y$ is of some pure degree $d \in \mathbb{Q}$ as in \cite[Theorem 5.0.1]{costello},
    \item $Y', Y$ are both log smooth and equidimensional,
    \item all arrows are DM-type and all stacks are locally noetherian and locally finite type over $\CC$,
    \item $X', X$ admit stratifications by global quotient stacks \cite{kreschthesis}
    \item $q$ is proper. 
\end{itemize}

Endow $f$ with a log perfect obstruction theory $E$ and give $f'$ the pullback obstruction theory. Then 
\[p_*\lvfc{X'}{E|_{X'}} = d \cdot \lvfc{X}{E}\]
in the Chow Ring of $X$. 

\end{theorem}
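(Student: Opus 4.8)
The plan is to reduce the logarithmic Costello Formula to the ordinary one by passing to Olsson's stacks $\Log$ and $\Log^2$, exactly as in the proofs of Theorems \ref{pfwdoflvfcslblowup} and \ref{commlogpb}. The key identity underlying everything is that log normal cones are ordinary normal cones over Olsson's stacks: $\Cl{X/Y} = \C{X/\Log Y}$. So I would first replace the given fs pullback square by the associated square of ordinary algebraic stacks. The subtle point, and the reason this is not immediate, is that the original fs square is \emph{not} cartesian on underlying stacks (this is the whole difficulty the paper is built around). Therefore I cannot simply apply \cite[Theorem 5.0.1]{costello} to the underlying square. Instead, I would use Lemma \ref{olssoncotcplxfsversion}\eqref{olssonsquareconstruction} to produce a genuinely cartesian square of ordinary stacks involving $\glob_q$ or $\Log^\square_q$, so that the strict, ordinary machinery applies.

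Concretely, the first step is to use the magic-square construction from Remark \ref{rmk:constructlpotandcptibilitydatumforsquare}, or more directly the cartesian square
\[
\begin{tikzcd}
\Log X' \arrow[r] \arrow[d] \pb      &\Log X \arrow[d]       \\
\glob_q \arrow[r]        &\Log Y,
\end{tikzcd}
\]
to realize $X'$ as a strict fs pullback inside the ordinary-cartesian world. Because $Y' \to Y$ is log smooth (being log smooth and equidimensional) and of pure degree $d$, I would argue that the induced map $\glob_q \to \Log Y$, or equivalently the relevant map of Olsson stacks, is again proper of pure degree $d$: properness is inherited since $q$ is proper and the face/projection maps are representable, and the degree is preserved because the extra factors ($\Log^1$-directions) contribute trivially to the generic fiber. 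This "degree $d$ is preserved upon passing to $\Log$" claim is the technical heart and, I expect, \textbf{the main obstacle}: one must check that the pure-degree-$d$ hypothesis of Costello survives the replacement of $q$ by its Olsson analogue, since $\Log Y' \to \Log Y$ differs from $Y' \to Y$ by the log-structure strata, and these strata must not alter the generic degree.

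Once the square is genuinely cartesian on underlying stacks and the degree is controlled, I would apply the ordinary Costello Formula \cite[Theorem 5.0.1]{costello} to the resulting strict square, equipped with the extended obstruction theory $\C{\Log X/\Log Y} \subseteq E|_{\Log X}$ from Remark \ref{rmk:cptblitydatumextendobstthies}. That gives the equality of pushed-forward cone classes $p_*[\C{X'/\text{(Olsson target)}}] = d\cdot[\C{X/\text{(Olsson target)}}]$ in the appropriate Chow group. Finally I would translate back: since the log virtual fundamental class $\lvfc{X}{E}$ is by Definition \ref{lpotdefinition} the unique class pulling back to $[\Cl{X/Y}]$, and the pushforward $p_*$ is compatible with the cone-to-cycle passage (intersecting with the zero section of $E$), the cone-level equality descends to $p_*\lvfc{X'}{E|_{X'}} = d \cdot \lvfc{X}{E}$ in $A_*X$. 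Throughout I would use the reduction-to-strict technique already deployed in Theorem \ref{pfwdoflvfcslblowup}: work \'etale-locally in $X$ to obtain charts, base-change along the log \'etale chart $A_P \to A_Q$ to make the map strict without altering log normal cones, and check the degree and pushforward statements are smooth-local in the target so that they may be verified after such localization.
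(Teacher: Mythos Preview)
Your plan has a genuine gap at exactly the point you flag as ``the main obstacle.'' The claim that the Olsson-stack map $\glob_q \to \Log Y$ (or any variant thereof) is proper and of pure degree $d$ is not something you can get for free: these stacks are not of finite type, the map is not proper, and even after restricting to quasicompact opens via Lemma \ref{ftfactor} you would still need a concrete argument that the log-structure strata do not disturb the generic degree. You do not supply such an argument, and the localization-via-charts step you mention at the end does not by itself resolve it --- base-changing along a log \'etale $A_P \to A_Q$ makes $f$ strict but does nothing to control the degree of the horizontal map $q$.

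The paper takes a different route that sidesteps this issue entirely. Rather than passing to Olsson stacks globally, it argues directly that the map of cones $s : \Cl{X'/Y'} \to \Cl{X/Y}$ is of pure degree $d$, checking this after strict smooth localization in $X$ and $Y$ (Lemmas \ref{lem:strsmloctargsmloccone}, \ref{lem:strsmlocsrcsmloccone}). Once local, it invokes the Kato factorization of Construction \ref{katofactorization}, $X \hookrightarrow X_\theta \to \mathbb{A}^{r+s}_Y \to Y$, with $X \hookrightarrow X_\theta$ a strict closed immersion and $X_\theta \to Y$ log smooth. The crucial ingredient you are missing is Remark \ref{puredegreeafterthekatofactorization}: pure degree $d$ pulls back along the smooth, integral, saturated map $\mathbb{A}^{r+s}_Y \to Y$, and survives the log blowup $X_\theta \to \mathbb{A}^{r+s}_Y$ because, by Nizio\l's result, log blowups of log smooth targets are birational. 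With $X'_\theta \to X_\theta$ now of pure degree $d$, the proof of ordinary Costello gives pure degree $d$ on $\Clstrict{X'/X'_\theta} \to \Clstrict{X/X_\theta}$, and the short exact sequence of Proposition \ref{targetsmdescentnormalcone} transfers this to $s$. So the paper's substitute for your unproven degree claim is the combination of the Kato factorization with Nizio\l's birationality; that is the idea you would need to insert to make your strategy go through.
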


\begin{remark}\label{puredegreeafterthekatofactorization}
Let $Y' \rightarrow Y$ be a map between log smooth, equidimensional stacks which is of pure degree $d$. Let $W \rightarrow Y$ be a smooth, log smooth, integral, and saturated morphism and $\widtilde{W} \rightarrow W$ a log blowup. Form the fs pullback diagram: 
\[\begin{tikzcd}
\widtilde{W'} \ar[r] \ar[d] \lpb       &\widtilde{W}\ar[d]       \\
W' \ar[r] \ar[d] \lpbstrict      &W \ar[d]       \\
Y' \ar[r]      &Y.
\end{tikzcd}\]

The property ``of pure degree $d$'' pulls back along smooth morphisms, so it applies to $W' \to W$. Then \cite[Proposition 4.3]{niziol} shows that $\widtilde{W} \to W$ is birational, so $\widtilde{W'} \to \widtilde{W}$ is also of pure degree $d$. 

\end{remark}

\begin{proof}[Proof of Theorem \ref{logcostello2}]

Consider the morphism  
\[s : \Cl{X'/Y'} \rightarrow \Cl{X/Y}.\]

We will prove that $s$ is of pure degree $d$. Both ``of pure degree'' and the specific degree $d$ can be checked after pulling back $s$ along a strict, smooth cover of $\Cl{X/Y}$. Lemmas \ref{lem:strsmloctargsmloccone}, \ref{lem:strsmlocsrcsmloccone} show that replacing $Y$ or $X$ by a smooth cover results in such a smooth cover of cones.

We may thereby assume $X$ and $Y$ are log schemes and the map $f$ globally factors as in Construction \ref{katofactorization}: 
\[X \rightarrow X_\theta \rightarrow \mathbb{A}^{r+s}_Y \to Y.\]

Note $\mathbb{A}^{r+s}_Y \rightarrow Y$ is smooth, log smooth, integral, and saturated, and $X_\theta \rightarrow \mathbb{A}_Y^{r+s}$ is a log blowup. We are in the situation of Remark \ref{puredegreeafterthekatofactorization}, so pulling back:
\[\begin{tikzcd}
X' \ar[r] \ar[d, hook] \lpbstrict      &X \ar[d, hook]      \\
X'_\theta \ar[r] \ar[d] \lpb       &X_\theta \ar[d]        \\
Y' \ar[r]      &Y
\end{tikzcd}\]
results in a map $X'_\theta \rightarrow X_\theta$ which is pure of degree $d$ along $X \rightarrow X_\theta$. The proof of Costello's Formula \cite[Theorem 5.0.1]{costello} then asserts that 
\[\Clstrict{X'/X'_\theta} \rightarrow \Clstrict{X/X_\theta}\]
is of pure degree $d$. The short exact sequences of Proposition \ref{targetsmdescentnormalcone}
\[\begin{tikzcd}
\Tl{X'_\theta/Y'} \ar[r] \ar[d]      &\Clstrict{X'/X'_\theta} \ar[r] \ar[d, "t"]     &\Cl{X'/Y'} \ar[d, "s"]     \\
\Tl{X_\theta/Y} \ar[r]       &\Clstrict{X/X_\theta} \ar[r]     &\Cl{X/Y}     
\end{tikzcd}\]
let us conclude that $s$ is as well. 

\end{proof}

\section{The Product Formula}

Let $V$, $W$ be log smooth, quasiprojective schemes throughout this section. We denote the stacks of \textit{prestable curves} and \textit{stable curves} which have $n$-markings and genus $g$ by $\Mprel, \Ms$, respectively \cite[0DMG]{sta}. They are endowed with divisorial log structures coming from the locus of singular curves \cite[1.5, Appendix A]{loggw}, \cite{fkatomodulilogcurves}.

\begin{definition}[Log Stable Maps]
The stack of log stable maps $\Ml(V)$ has fiber over an fs log scheme $T$ the category of diagrams of fs log schemes 
\[\begin{tikzcd}
C \arrow[r] \arrow[d]      &V      \\
T
\end{tikzcd}\]
with $C \to T$ a log smooth curve \cite[Definition 1.2]{fkatomodulilogcurves} of genus $g$ and $n$ marked points, such that the underlying diagram of schemes is a stable map of curves. 

\end{definition}

Remarkably, the log algebraic stack $\Ml(\Spec \CC)$ of log curves without a map is isomorphic to the ordinary stack of stable curves $\Ms$ with log structure induced by the boundary of degenerate curves \cite[Theorem 4.5]{fkatomodulilogcurves}. The log structures of $\Ml(V)$ for a general fs target may be more complicated, as they have to do with the ``tropical deformation space'' of the curve \cite{loggw}. 

\begin{construction}[{\cite[Section 5]{loggw}}]\label{lpotforlogstablemaps}
We recall the construction \cite[Section 5]{loggw} of the natural \lpot\, for $\Ml(V) \rightarrow \Mprel$ to clarify differences in notation. 

Write $\UU \to \Mprel$ for the universal curve. Define $\UU_V$ as the fs pullback, naturally equipped with a tautological map to $V$:
\[\begin{tikzcd}
V       &\UU_V \arrow[r, "\pi_V"] \arrow[d] \arrow[l] \lpbstrict      &\Ml(V) \arrow[d]         \\
        &\UU \arrow[r]        &\Mprel
\end{tikzcd}\]
This diagram induces maps between log cotangent complexes
\[\lccx{V}|_{\UU_V} {\longrightarrow} \lccx{\UU_V/\UU}   \overset{t}{\longleftarrow}  \lccx{\Ml(V)/\Mprel}|_{\UU_V}.\]
The map $\UU \to \Mprel$ is integral, saturated, and log smooth according to its functor of points, so its underlying map of stacks is flat and the fs pullback square is also an ordinary pullback. 

Then $t$ is an isomorphism \cite[1.1 (iv)]{logcotangent}, and the log cotangent complex of $V$ is \cite[1.1 (iii)]{logcotangent}
\[\lccx{V} = \lkah{V}[0].\]
We've written $[0]$ to consider a coherent sheaf as a chain complex concentrated in degree $0$. Via the isomorphism $t$ and this identification, we have obtained a map
\begin{equation}\label{eqn:adjointtoobstthy}
\lkah{V}[0]|_{\UU_V} \to \lccx{\UU/\Mprel}|_{\UU_V}.\end{equation}

We need the ordinary relative dualizing sheaf $\omega_{\pi_V^\circ}$ and the identification 
\[L \pi_V^! (\cdot) = \omega_{\pi_V^\circ} \overset{L}{\otimes} L \pi_V^* (\cdot). \]

Tensor \eqref{eqn:adjointtoobstthy} by $\omega_{\pi_V^\circ}$ and use the adjunction:
    \[\Omega^\ell_{V}[0]\,|_{\UU_V} \overset{L}{\otimes} \omega_{\pi_V^\circ} \longrightarrow L \pi_V^! \lccx{\Ml(V)/\Mprel},\]     
    \[E(V) := R\pi_{V *}(\Omega^\ell_{V}[0]\,|_{\UU_V} \overset{L}{\otimes} \omega_{\pi_V^\circ}) \longrightarrow \lccx{\Ml(V)/\Mprel}.\]
We won't repeat the verification \cite[Proposition 5.1]{loggw} that $E(V)$ is a \lpot. 

\end{construction}

\begin{remark}

The map \eqref{eqn:adjointtoobstthy} comes from the map on normal cones
\[\Cl{\Ml(V)/\Mprel}|_{\UU_V} \overset{\sim}{\longleftarrow} \Cl{\UU_V/\UU} \longrightarrow B\Tl{V}|_{\UU_V}.\]
We needed duality, so we opted for the other perspective. 

\end{remark}

\begin{remark}[Variants]
The reader may choose to work in the relative setting of a log smooth and quasiprojective map $V \to S$. Obstruction Theories are obtained in the same way. 

We can naturally impose ``contact order'' conditions \cite{wisebounded} in the log setting, but we only fix genus and number of markings to be consistent with \cite{logprodfmla}. The reader may readily vary the numerical type conditions in our formulas. 
\end{remark}

We need one more stack, $\mathfrak{D}$: Points of $\mathfrak{D}$ over $T$ are diagrams $(C' \leftarrow C \rightarrow C'')$ of genus $g$, $n$-pointed prestable curves over $T$ whose maps are partial stabilizations (they lie over the identities in $\Ms$) that don't both contract any component. In other words, $C \to C' \times C''$ itself is a stable map. This stack is only necessary to form an fs pullback square:

\begin{situation}[{\cite[Section 2]{logprodfmla}}]\label{sit:thechaseddiagram}
Recall the fs pullback square: 
\begin{equation}\label{notyetfactoredsquare}
\begin{tikzcd}
\Ml(V \times W) \arrow[r] \arrow[d, "c"] \lpb        &\Ml(V) \times \Ml(W) \arrow[d, "a"]     \\
\mathfrak{D} \arrow[r, "\widtilde{\Delta}"]        &\Mprel \times \Mprel
\end{tikzcd}\end{equation}

Let $C \to V \times W$ be a log stable map over a base $T$. The maps $(C \to V)$, $(C \to W)$ needn't be stable; denote their stabilizations by $(C' \to V)$, $(C'' \to W)$, respectively. 

The top horizontal arrow in \eqref{notyetfactoredsquare} sends $(C \to V \times W)$ to the induced log stable maps $(C' \to V, C'' \to W)$. The vertical arrow $c$ sends $(C \to V \times W)$ to the partial stabilizations $(C' \leftarrow C \to C'')$. The map $\widtilde{\Delta}$ sends a diagram $(C' \leftarrow C \to C'')$ to the pair of prestable curves $C', C''$. Finally, $a$ sends a pair of log stable maps $(C' \to V, C'' \to W)$ to the prestable curves $(C', C'')$.

This square has a factorization:
\begin{equation}\label{completelyfactoredsquare}
\begin{tikzcd}
\Ml(V \times W) \arrow[r, "h"] \arrow[d, "c"] \lpb        &Q \arrow[r] \arrow[d] \lpb     &\Ml(V) \times \Ml(W) \arrow[d, "a"]     \\
\mathfrak{D} \arrow[r, "l"]        &Q' \arrow[r, "\phi"] \arrow[d] \lpb       &\Mprel \times \Mprel \arrow[d, "s \times s"]      \\
        &\Ms \arrow[r, "\Delta"]        &\Ms \times \Ms,
\end{tikzcd}\end{equation}
where $s: \Mprel \to \Ms$ stabilizes a prestable curve.

To be clear, $Q = \Ml(V) \times_{\Ms}^\ell \Ml(W)$ and $Q' = \Mprel \times_{\Ms}^\ell \Mprel$ are the analogues of \cite{logprodfmla}'s $P$, $\mathfrak{P}$, etc. 
\end{situation}

\begin{theorem}[The ``Log Gromov-Witten Product Formula'']\label{loggwfmla}
With $V$, $W$ log smooth, quasiprojective schemes, 
\[h_*\lvfc{\Ml(V \times W)}{E(V \times W)} = \Delta^!(\lvfc{\Ml(V)}{E(V)} \times \lvfc{\Ml(W)}{E(W)}).\]
\end{theorem}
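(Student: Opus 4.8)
The plan is to compare both classes in $A_*Q$ by running K. Behrend's original argument \cite{prodfmla} through the factored fs-cartesian diagram of Situation \ref{sit:thechaseddiagram}, substituting for each ordinary intersection-theoretic input the logarithmic replacement established above. The right-hand side is the refined Log Gysin map $\Delta^!$ applied to the external product $\alpha := \lvfc{\Ml(V)}{E(V)} \times \lvfc{\Ml(W)}{E(W)}$, which is the \lvirt\ of $a : \Ml(V) \times \Ml(W) \to \Mprel \times \Mprel$ for the product obstruction theory $E(V) \boxplus E(W)$. Since the bottom-right square presents $\phi : Q' \to \Mprel \times \Mprel$ as the fs pullback of the diagonal $\Delta : \Ms \to \Ms \times \Ms$ of the log smooth stack $\Ms$, I would equip $\Delta$ with its log l.c.i.\ theory $\Nl{\Ms/\Ms \times \Ms}$ and invoke Remark \ref{gysinmapofpullbackisthesame} to identify $\Delta^! = \phi^!$.

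First I would collapse the right-hand side to a single \lvirt\ on $Q$. Applying the commutativity corollary following Theorem \ref{commlogpb} to the upper-right fs-cartesian square --- with $a$ carrying $E(V) \boxplus E(W)$ and $\phi$ the pullback of $\Delta$ --- interchanges the operations $\Delta^!$ and $a^!$. Combined with Remark \ref{gysinmapofpullbackisthesame}, this rewrites $\Delta^!(\alpha)$ as the \lvirt\ of the middle vertical map $Q \to Q'$ for the obstruction theory $E$ pulled back from $E(V) \boxplus E(W)$.

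Next I would dispatch the pushforward $h_*$ on the left. The left square is the fs pullback of $l : \mathfrak{D} \to Q'$; I would give $c : \Ml(V \times W) \to \mathfrak{D}$ the natural theory $E(V \times W)$ of Construction \ref{lpotforlogstablemaps} and $Q \to Q'$ the pullback theory, then verify (the bookkeeping step) that $E(V \times W)$ is the restriction of the pulled-back $E(V) \boxplus E(W)$, using $\UU_{V \times W} = \UU_V \times_{\UU} \UU_W$ and the splitting $\lkah{V \times W} = \lkah{V} \oplus \lkah{W}$. The geometric input is that $l$ is proper and of \emph{pure degree one}: it is an isomorphism over the dense open locus of diagrams $(C' \leftarrow C \rightarrow C'')$ in which no stabilization contracts a component, and étale-locally it is a log modification of stacks that are log smooth over $\CC$ (since $\Ms$ is log smooth, so is $Q' = \Mprel \times^\ell_{\Ms} \Mprel$, and $\mathfrak{D}$ is a subdivision of $Q'$). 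Thus \cite[Proposition 4.3]{niziol}, exactly as in Remark \ref{puredegreeafterthekatofactorization}, makes $l$ birational, hence of pure degree one. The Log Costello Formula (Theorem \ref{logcostello2}) with $d = 1$ then yields $h_* \lvfc{\Ml(V \times W)}{E(V \times W)} = \lvfc{Q}{E}$, which by the preceding paragraph equals $\Delta^!(\alpha)$, completing the comparison.

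The main obstacle will be the obstruction-theory bookkeeping of the third step: confirming that the natural \lpot\ $E(V \times W)$ relative to $\mathfrak{D}$ coincides, under the fs-cartesian identifications of the diagram, with the pullback of $E(V) \boxplus E(W)$ relative to $\Mprel \times \Mprel$, and that a compatibility datum in the sense of Remark \ref{rmk:cptblitydatumextendobstthies} exists so that Theorem \ref{commlogpb} and its corollary genuinely apply. The purely logarithmic difficulty --- that the squares are fs-cartesian but \emph{not} cartesian on underlying stacks, so no ordinary normal-cone machinery is available --- is precisely what Lemmas \ref{lem:strsmloctargsmloccone} and \ref{lem:strsmlocsrcsmloccone} together with Theorems \ref{commlogpb} and \ref{logcostello2} were constructed to absorb; once these compatibilities are in hand, what remains is Behrend's formal diagram chase applied verbatim.
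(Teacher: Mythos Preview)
Your proposal is essentially the paper's proof: compute $\lvfc{Q}{E(V)\boxplus E(W)}$ two ways, once via commutativity of Log Gysin maps (Theorem \ref{commlogpb} and its corollary, using $\Delta^! = \phi^!$ from Remark \ref{gysinmapofpullbackisthesame}) and once via the Log Costello Formula (Theorem \ref{logcostello2}) applied to the upper-left square with $l$ of pure degree one.

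One caution on the bookkeeping you correctly flag as the main obstacle: the identification $\UU_{V\times W} = \UU_V \times_{\UU} \UU_W$ is not right. The universal curve over $\Ml(V\times W)$ maps to the pullbacks $\UUU_V, \UUU_W$ of $\UU_V, \UU_W$ through \emph{partial stabilization} maps $q_V, q_W$ which may contract rational components, so the compatibility $E(V\times W)\simeq (E(V)\boxplus E(W))|_{\Ml(V\times W)}$ requires the additional input that the unit $F \to Rq_{V*}q_V^*F$ is an isomorphism for vector bundles $F$ (checked fiberwise on the contracted $\mathbb{P}^1$'s), combined with cohomology and base change along the flat, integral, saturated universal curve. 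With that correction your outline matches the paper's argument.
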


Our proof will be the same as K. Behrend's \cite{prodfmla}: we compute the log normal cone of the map $Q \rightarrow Q'$ in two different ways.

\begin{remark}[{On Diagram \eqref{completelyfactoredsquare}}]
We equip $a$ with the product $E(V) \boxplus E(W)$ of the natural \lpot's of Construction \ref{lpotforlogstablemaps}, adopting the notation
\[E \boxplus E' := E|_{V \times W} \oplus E'|_{V \times W}.\]

The cotangent complex $\lccx{\Delta}$ is of perfect amplitude in [-1, 0] because its source and target are log smooth. Therefore $\Cl{\Delta} = \Nl{\Delta}$ serves as a natural \lpot for itself. We equip $\phi$ with the pullback obstruction theory, resulting in 
\[\Delta^! = \phi^!\]
by Remark \ref{gysinmapofpullbackisthesame}. We endow the square bounded by $\phi$ and $a$ with the natural compatibility datum afforded all such squares as in Remark \ref{rmk:constructlpotandcptibilitydatumforsquare}.

All of the arrows in Diagrams \eqref{notyetfactoredsquare} and \eqref{completelyfactoredsquare} are of DM-type. 
\end{remark}

\begin{lemma}
The stabilization map $s : \Mprel \to \Ms$ is log smooth. 
\end{lemma}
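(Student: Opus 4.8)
The plan is to verify Kato's criterion for log smoothness \cite[Corollary IV.3.1.10]{ogusloggeom} after passing to charts, in the spirit of the local analysis of maps of log moduli of curves described in the introduction. The assertion is \'etale-local on $\Mprel$ and $\Ms$, so I would fix a geometric point of $\Mprel$ given by a prestable curve $C_0$ together with its stabilization $\overline{C}_0 = s(C_0)$, and describe the divisorial log structures near these two points. Writing $E$ for the set of nodes of $C_0$ and $E'$ for the set of nodes of $\overline{C}_0$, the characteristic monoids are $\NN^{E}$ upstairs and $\NN^{E'}$ downstairs, the generators being the boundary divisors that smooth the respective nodes \cite{fkatomodulilogcurves}, \cite{loggw}.

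Next I would pin down the chart induced by $s$. Stabilization contracts the unstable rational components of $C_0$: each node $e' \in E'$ is the image of a chain of rational components of $C_0$, whose nodes form a subset $\mathrm{chain}(e') \subseteq E$, while the nodes lying on contracted rational tails have no image downstairs. The standard smoothing-parameter relation for a chain of rational bridges records that the smoothing parameter of $e'$ is the product of the smoothing parameters along the chain, giving on characteristic monoids the map
\[ \theta : Q = \NN^{E'} \longrightarrow P = \NN^{E}, \qquad t_{e'} \longmapsto \sum_{e \in \mathrm{chain}(e')} s_e. \]
Since the subsets $\mathrm{chain}(e')$ are pairwise disjoint and each image generator is primitive, $\gp{\theta}$ is injective with torsion-free cokernel, so $\theta$ is a valid chart. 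Following Remark \ref{axiomaticcharacterizationoflognormalcones}, I form the fs pullback $\rectglob := A_P \times^\ell_{A_Q} \Ms$, so that $s$ factors \'etale-locally as $\Mprel \to \rectglob \to \Ms$, where $\rectglob \to \Ms$ is log \'etale \cite[Corollary 5.23]{logstacks} and $\Mprel \to \rectglob$ is strict by construction of the chart.

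It then remains to show that the residual map $\Mprel \to \rectglob$ is smooth in a neighbourhood of $C_0$, which is the step I expect to be the main obstacle. The log structures have now been divided out, so the content is the purely deformation-theoretic assertion that, once the combinatorial type and the node-smoothing data have been fixed, the remaining moduli of $C_0$ relative to $\overline{C}_0$ --- the points at which the contracted chains and tails are attached, together with the moduli of the inserted rational components and their markings --- vary in a smooth family. This follows from the unobstructedness of deformations of prestable curves: the relevant relative moduli are assembled from fibered powers of the universal curve $\UU \to \Mprel$, which is log smooth, and from configuration spaces of smooth points, both of which are smooth. Equivalently, one may run the whole argument through the infinitesimal lifting criterion, extending a prestable log curve over a strict square-zero thickening $T_0 \subseteq T$ compatibly with a given stable extension of its stabilization; there the only delicate point is lifting the factorization $t_{e'} = \prod_{e} s_e$ of the smoothing parameters, which is possible \'etale-locally precisely because $\theta$ is a log smooth (indeed log \'etale) monoid map. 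Combining the chart computation with this residual smoothness, $s$ is exhibited \'etale-locally as a strict smooth morphism followed by a log \'etale one, hence is log smooth.
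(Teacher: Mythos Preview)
Your argument is correct in outline, but it takes a genuinely different route from the paper's. The paper observes that the strict smooth cover $\bigsqcup_m \Msp{m} \to \Mprel$, given by forgetting marked points without stabilizing, is kummer and surjective, so by log flat descent \cite[Theorem 0.2]{logflatdescent} it suffices to show each composite $\Msp{m} \to \Ms$ is log smooth; this in turn is immediate because $\Msp{1} \to \Ms$ is the universal curve, hence tautologically log smooth, and one iterates. Your approach instead unwinds the divisorial log structures directly, identifies the monoid chart $\theta : \NN^{E'} \to \NN^E$ via the chain-of-bridges product relation, and then checks residual smoothness of the strict factor by hand. The paper's proof is considerably shorter and avoids any explicit deformation-theoretic analysis, at the cost of invoking a descent theorem; your argument is more self-contained and makes the combinatorics of the log structures visible, but the ``residual smoothness'' step, while correct (it ultimately comes down to the smoothness of the universal curve and of configuration spaces of smooth points on it), is left somewhat sketchy and would need a bit more care to be airtight --- in particular, you should say why the chart $\theta$ really is the one induced by $s$ on characteristic monoids, not just that it has the right source and target.
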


\begin{proof}

The cover $\bigsqcup_m \Msp{m} \to \Mprel$ given by forgetting marked points and not stabilizing is strict smooth \cite[1.2.1]{logprodfmla}. This map is in particular kummer and surjective, and \cite[Theorem 0.2]{logflatdescent} applies with $\mathbb{P}=$ ``log smooth'' once we argue that the composite $\bigsqcup_m \Msp{m} \to \Ms$ is log smooth. 

The forgetful map $\Msp{1} \to \Ms$ is the universal curve, so it is tautologically log smooth. We see the map $\Msp{m} \to \Ms$ is log smooth by iterating this forgetfulness, and this completes the argument.

\end{proof}

\begin{remark}

The map $\mathfrak{D} \to \Mprel$ which records the initial curve is log \'etale since the original map was \'etale \cite[Lemma 4]{prodfmla} and ours is the fsification thereof. The stack $Q'$ is log smooth because the map $Q' \to \Ms$ is pulled back from $s \times s$. 

Given a log \'etale map $X' \to X$ of log smooth log algebraic stacks with $X$ equidimensional, we claim $X'$ must be as well. The maps $X' \subseteq \Log X'$, $X \subseteq \Log X$ are dense because of the log smoothness assumption and the map $\Log X' \to \Log X$ is \'etale. Thus $\Log X$ and $\Log X'$ are equidimensional, as well as $X' \subseteq \Log X'$. This argument shows that fsification preserves equidimensionality of log smooth stacks, so our fs versions of $\mathfrak{D}$, $Q'$ are equidimensional because the original versions \cite{prodfmla} were.

\end{remark}

\begin{lemma}
The obstruction theories $E(V)$, $E(W)$, $E(V \times W)$ are compatible in the sense that
\[\widtilde{\Delta}^*(E(V) \boxplus E(W)) \simeq E(V \times W).\]
\end{lemma}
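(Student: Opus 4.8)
The statement asserts that the obstruction theory $E(V \times W)$ for $\Ml(V \times W) \to \Mprel$ agrees, after pullback along $\widtilde{\Delta}$, with the product obstruction theory $E(V) \boxplus E(W)$. The strategy is to trace through the construction of these obstruction theories in Construction \ref{lpotforlogstablemaps} and exploit the compatibility of everything in sight with the fs pullback square \eqref{notyetfactoredsquare}. The key geometric input is that the universal curve over $\Ml(V \times W)$ is the same as the universal curve pulled back from $\mathfrak{D}$, and that a log stable map to $V \times W$ is literally a pair of log maps to $V$ and to $W$ from a \emph{common} source curve $C$ (before stabilizing each factor separately).

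\medskip

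\emph{First I would set up the universal curves.} Let $\UU_{V \times W} \to \Ml(V \times W)$ denote the universal curve, with its tautological map to $V \times W$; this is the fs pullback of $\UU \to \Mprel$ along $c$. Because $\UU \to \Mprel$ is integral, saturated, and log smooth, its underlying map of stacks is flat, so the fs pullback agrees with the ordinary pullback (exactly as used in Construction \ref{lpotforlogstablemaps}). The partial stabilization maps $C' \leftarrow C \to C''$ classified by $\mathfrak{D}$ give, over $\UU_{V \times W}$, factorizations $\pi_{V \times W}$ through the universal curves pulled back from $\Ml(V)$ and $\Ml(W)$. The crucial identification is that the tautological map $\UU_{V \times W} \to V \times W$ is the product of the two tautological maps $\UU_{V \times W} \to V$ and $\UU_{V \times W} \to W$, so that
\[
\Omega^\ell_{V \times W}[0]\big|_{\UU_{V \times W}} \simeq \Omega^\ell_{V}[0]\big|_{\UU_{V \times W}} \oplus \Omega^\ell_{W}[0]\big|_{\UU_{V \times W}},
\]
using that $\Omega^\ell$ of a product of log smooth schemes splits as a direct sum of the pullbacks of each factor's log cotangent sheaf.

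\medskip

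\emph{Next I would push this splitting through the construction of $E$.} Since the relative dualizing sheaf $\omega_{\pi^\circ}$ and the derived pushforward $R\pi_*$ are both additive in the coefficient complex, tensoring the above splitting with $\omega_{\pi^\circ_{V \times W}}$ and applying $R\pi_{V \times W *}$ commutes with the direct sum decomposition. Here one must check that the pushforward along $\pi_{V \times W}$ restricts correctly: over a point of $Q$ in the factored square \eqref{completelyfactoredsquare}, the curve for the $V$-factor is the stabilization $C'$ and for the $W$-factor is $C''$, but each stabilization map contracts only rational tails/bridges that carry no higher cohomology, so $R\pi_{C *}$ and $R\pi_{C' *}$ (resp. $R\pi_{C'' *}$) of the relevant dualizing-twisted pullbacks agree — this is the standard invariance of the obstruction complex under stabilization, and is precisely where the modular interpretation of $\mathfrak{D}$ and the ``don't both contract'' condition is used. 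Granting this, we obtain
\[
E(V \times W) = R\pi_{V \times W *}\big(\Omega^\ell_{V \times W}[0]|_{\UU_{V \times W}} \otimes^L \omega_{\pi^\circ}\big) \simeq \widetilde{\Delta}^* E(V) \oplus \widetilde{\Delta}^* E(W),
\]
which is the claimed $\widetilde{\Delta}^*(E(V) \boxplus E(W))$, and one finally verifies the identification is compatible with the maps to the (split) log cotangent complex $\lccx{\Ml(V \times W)/\Mprel}$ so that it is an isomorphism of obstruction theories, not merely of complexes.

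\medskip

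\emph{The main obstacle} is the comparison of the pushforwards across the stabilization discrepancy: $E(V)$ and $E(W)$ are built from the universal curves over $\Ml(V)$ and $\Ml(W)$, whose fibers are the \emph{stabilized} curves $C'$, $C''$, whereas $E(V \times W)$ is built from the common unstabilized curve $C$. Reconciling $R\pi_{C*}$ with $R\pi_{C'*}$ and $R\pi_{C''*}$ requires knowing that $\Omega^\ell_V|_{\UU}\otimes \omega$ has vanishing higher direct images along each contraction and that the natural comparison maps are isomorphisms — essentially the cohomology-and-base-change argument that underlies \cite[Proposition 5.1]{loggw}, now applied relative to $\mathfrak{D}$. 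I expect the bulk of the work to lie in making this base-change/stabilization-invariance statement precise in the log setting and checking it respects the direct-sum decomposition, after which the additivity of $\Omega^\ell$ for products and the additivity of $R\pi_*$ deliver the result formally.
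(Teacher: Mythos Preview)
Your proposal is correct and follows essentially the same approach as the paper, which in turn echoes Behrend's original argument: split $\Omega^\ell_{V \times W}$ as a box sum, then reconcile the pushforward along the unstabilized curve with the pushforwards along the stabilized curves $C'$, $C''$. The paper makes the stabilization-invariance step you flag as the main obstacle precise via the claim that $F \to Rq_{V*}q_V^*F$ is an isomorphism for any vector bundle $F$ (argued by contracting one $\mathbb{P}^1$ at a time and computing fiberwise cohomology), then finishes with cohomology and base change---exactly the mechanism you anticipated.
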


\begin{proof}

We completely echo the proof of \cite[Proposition 6]{prodfmla}.

Consider the diagram of universal log curves and tautological maps with the notation:
\[\begin{tikzcd}
V       &       &V\times W  \arrow[ll] \\
\UU_V \arrow[u, "f_V"] \arrow[d, "\pi_V", swap]       &\UUU_V \arrow[l, "s_V", swap]  \arrow[dr, "\widtilde{\pi}_V", swap] \arrow[dl, very near start, phantom, "\msout{\ell} \urcorner"]    &\UU_{V \times W}  \arrow[l, "q_V", swap] \arrow[u, "f_{V \times W}", swap] \arrow[d, "\pi_{V \times W}"]             \\
\Ml(V)      &       &\Ml(V \times W)  \arrow[ll, "r_V"].         \\
\end{tikzcd}\]

We claim $F \rightarrow Rq_{V*}q_V^*F$ is an isomorphism for any vector bundle $F$ on $\UU_V$. The map $q_V$ represents partial stabilization. We make the argument for contracting one $\mathbb{P}^1$ at a time. 

We first compute that $R^pq_{V*}q_V^*F = 0$ for $p \neq 0$. This claim is local in $\UU_V$, so assume $F$ is trivial. The fiber of $R^p q_{V*}q_V^*F$ at a point $x$ is $H^p(q_V^{-1}(x), q_V^*F)$. Hence the fibers $q_V^{-1}(x)$ are either a point or $\mathbb{P}^1$. On each fiber, the cohomology of the trivial vector bundle is concentrated in degree 0 \cite[01XS]{sta}. Not only are $F$ and $q_{V*}q_V^*F$ abstractly isomorphic in that case, but the natural map is an isomorphism \cite[Exercise 9.3.11]{fgaexplained}.

The universal curve $\pi_V$ is tautologically flat, integral, and saturated. The fs pullback square it belongs to is therefore also an ordinary flat pullback, subject to cohomology and base change \cite[Tag 08IB]{sta}. This gives:
\begin{align*}
    L r_V^* R\pi_{V*} Lf_V^* \Omega_V  &= R \widtilde{\pi}_{V*} L s_V^* Lf_V^* \Omega_V     \\
        &= R \widtilde{\pi}_{V*} Rq_{V*} q_V^* L s_V^* Lf_V^* \Omega_V        \\
        &= R\pi_{V \times W*} Lf^*_{V \times W} (\Omega_V|_{V \times W}).
\end{align*}
All the same goes for $W$. Add the two together to get 
\[L r_V^* R\pi_{V*} Lf_V^* \Omega_V \boxplus L r_W^* R\pi_{W*} Lf_W^* \Omega_W\,\, = \,\, R\pi_{V \times W*} Lf^*_{V \times W} (\Omega_V \boxplus \Omega_W).\]

This is dual to the compatibility we set out to prove, so we are through. 

\end{proof}

\begin{proof}[Proof of Theorem \ref{loggwfmla}]

Compute the log virtual fundamental class $\vfc{Q}{E(V) \boxplus E(W)}$ in two different ways: 
\begin{align*}
    \vfc{Q}{E(V) \boxplus E(W)} &:= [\Cl{Q/Q'} \subseteq E(V) \boxplus E(W)]        \\
            &=a^!(Q')        \\
            &=a^! \phi^! (\Mprel \times \Mprel)        \\
            &=\phi^! a^!  (\Mprel \times \Mprel)       \\
            &=\Delta^! \vfc{\Ml(V) \times \Ml(W)}{E(V) \boxplus E(W)}.          \intertext{On the other hand,}
\\
    \vfc{Q}{E(V) \boxplus E(W)}     &= h_* \vfc{\Ml(V \times W)}{E(V \times W)}
\end{align*}
by the Log Costello Formula \ref{logcostello2}. 

\end{proof}

\bibliographystyle{alpha}
\bibliography{zbib}

\end{document}